\theoremstyle{plain}
\newtheorem*{thmw}{Theorem}
\newtheorem{thm}{Theorem}[section]
\newtheorem{cor}[thm]{Corollary}
\newtheorem{lem}[thm]{Lemma}
\newtheorem{prop}[thm]{Proposition}
\theoremstyle{definition}
\newtheorem{defn}[thm]{Definition}
\newtheorem{exm}[thm]{Example}
\theoremstyle{remark}
\newtheorem{rem}[thm]{Remark}
\renewcommand{\mod}{\operatorname{mod}\nolimits}
\newcommand{\Mod}{\operatorname{Mod}\nolimits}
\newcommand{\add}{\operatorname{add}\nolimits}
\newcommand{\Hom}{\operatorname{Hom}\nolimits}
\newcommand{\Ext}{\operatorname{Ext}\nolimits}
\newcommand{\Coker}{\operatorname{Coker}\nolimits}
\newcommand{\op}{\operatorname{op}\nolimits}
\newcommand{\M}{\mathcal M}
\newcommand{\B}{\mathcal B}
\newcommand{\uB}{\underline{\B}}
\newcommand{\U}{\mathcal U}
\newcommand{\V}{\mathcal V}
\newcommand{\W}{\mathcal W}
\newcommand{\h}{\mathcal H}
\newcommand{\s}{\mathcal S}
\newcommand{\T}{\mathcal T}
\newcommand{\svecv}[2]{\left(\begin{smallmatrix}
      #1 \\
      #2
    \end{smallmatrix}\right)}
\newcommand{\svech}[2]{\left(\begin{smallmatrix}
      #1 & #2
\end{smallmatrix}\right)}
\renewcommand{\emph}{\textit}
\renewcommand{\phi}{\varphi}
\begin{document}

\title{Hearts of twin cotorsion pairs on exact categories}
\author{Yu Liu}
\address{Graduate School of Mathematics \\ Nagoya University \\ 464-8602 Nagoya, Japan}
\email{d11005m@math.nagoya-u.ac.jp}

\begin{abstract}
In the papers of Nakaoka, he introduced the notion of hearts of (twin) cotorsion pairs on triangulated categories and showed that they have structures of (semi-) abelian categories. We study in this article a twin cotorsion pair $(\s,\T),(\U,\V)$ on an exact category $\B$ with enough projectives and injectives and introduce a notion of the heart. First we show that its heart is preabelian. Moreover we show the heart of a single cotorsion pair is abelian. These results are analog of Nakaoka's results in triangulated categories. We also consider special cases where the heart has nicer structure. By our results, the heart of a special twin cotorsion pair $(\s,\T),(\T,\V)$, is integral and almost abelian. Finally we show that the Gabriel-Zisman localisation of the heart at the class of regular morphisms is abelian, and moreover it is equivalent to the category of finitely presented modules over a stable subcategory of $\B$.

\end{abstract}

\keywords{exact category, preabelian category, abelian category, cotorsion pair, twin cotorsion pair, heart}

\maketitle

\tableofcontents

\section{Introduction}

The cotorsion pairs were first introduced by Salce in \cite{S}, and it has been deeply studied in the representation theory during these years, especially in tilting theory and Cohen-Macaulay modules \cite{A} (see \cite{HI} for more recent examples). Recently, the cotorsion pair are also studied in triangulated categories \cite{IY}, in particular, Nakaoka introduced the notion of hearts of cotorsion pairs and showed that the hearts are abelian categories \cite{N}. This is a generalization of the hearts of t-structure in triangulated categories \cite{BBD} and the quotient of triangulated categories by cluster tilting subcategories \cite{KZ}. Moreover, he generalized these results to a more general setting called twin cotorsion pair \cite{N1}.

The aim of this paper is to give similar results for cotorsion pairs on Quillen's exact categories, which plays an important role in representation theory \cite{K}. We consider a \emph{cotorsion pair} in an exact category (see for example \cite[{A.1}]{KS}), which is a pair $(\U,\V)$ of subcategories of an exact category $\B$ satisfying $\Ext^1_\B(\U,\V)=0$ (\emph{i.e.} $\Ext^1_\B(U,V)=0$, $\forall U\in \U$ and $\forall V\in \V$) and any $B\in \B$ admits two short exact sequences $V_B\rightarrowtail U_B\twoheadrightarrow B$ and $B\rightarrowtail V^B\twoheadrightarrow U^B$ where $V_B,V^B\in \V$ and $U_B,U^B\in \U$ (see Definition \ref{2} for more details). Let
\begin{align*}
\B^+:=\{B\in \B \text{ } | \text{ } U_B\in \V \}, \quad
\B^-:=\{B\in \B \text{ } | \text{ } V^B\in \U \}.
\end{align*}
We define the \emph{heart} of $(\U,\V)$ as the quotient category (see Definition \ref{6} for more details)
$$\underline \h:=(\B^+\cap \B^-)/(\U\cap \V).$$
An important class of exact categories is given by Frobenius categories, which gives most of important triangulated categories appearing in representation theory. Now we state our first main result, which is an analogue of  \cite[Theorem 6.4]{N}. We will prove it in Section 4.

\begin{thmw}\label{0.2}
Let $(\U,\V)$ be a cotorsion pair on an exact category $\B$ with enough projectives and injectives. Then $\underline \h$ is abelian.
\end{thmw}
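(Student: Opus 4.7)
The plan is to reduce to the preabelian case, which the paper proves earlier for twin cotorsion pairs, and then exploit the extra symmetry of a single cotorsion pair to collapse the coimage-to-image comparison morphism. A cotorsion pair $(\U,\V)$ may be regarded as the twin cotorsion pair $(\U,\V),(\U,\V)$, and with this identification the definitions of $\B^+$, $\B^-$ and $\underline{\h}$ given in the introduction coincide with those used in the twin setting. The earlier result then provides that $\underline{\h}$ is preabelian, so it remains only to show that for every morphism $\underline{f}\colon X \to Y$ in $\underline{\h}$ the canonical morphism $\operatorname{Coim}(\underline{f}) \to \operatorname{Im}(\underline{f})$ is an isomorphism.

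I will construct kernels and cokernels in $\underline{\h}$ by explicit pushout and pullback constructions in $\B$. Lifting $\underline{f}$ to $f\colon X \to Y$, I push the chosen exact sequence $V_X \rightarrowtail U_X \twoheadrightarrow X$ forward along $f$ to produce a short exact sequence $V_X \rightarrowtail Z \twoheadrightarrow Y$. A further resolution step, using enough projectives/injectives together with the $\Ext^1$-vanishing between $\U$ and $\V$, moves the middle term into $\B^+ \cap \B^-$ and yields a representative of $\Coker(\underline{f})$. The kernel $\Ker(\underline{f})$ is obtained dually from $Y \rightarrowtail V^Y \twoheadrightarrow U^Y$, and the image and coimage of $\underline{f}$ come from iterating these two constructions. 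Independence of the chosen resolutions modulo $\U \cap \V$ is the standard coherence check and will mirror the corresponding arguments used in the preabelian step.

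The decisive step, and the one I expect to be the main obstacle, is showing that the comparison morphism $\operatorname{Coim}(\underline{f}) \to \operatorname{Im}(\underline{f})$ is an isomorphism. In a genuine twin setting the objects witnessing $B \in \B^+$ and $B \in \B^-$ live in two different subcategories, but in the single-pair case they both lie in $\U \cap \V$. This common home allows one to splice the pushout producing $\Coker(\underline{f})$ and the pullback producing $\Ker(\underline{f})$ across a single object of $\U \cap \V$, giving one short exact sequence in $\B$ whose image in $\underline{\h}$ factors $\underline{f}$ as the canonical epimorphism onto its coimage followed by the canonical monomorphism from its image, both realised on a common representative. By the universal properties of coimage and image in a preabelian category, the comparison morphism is then forced to be an isomorphism, completing the proof that $\underline{\h}$ is abelian. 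The careful bookkeeping required to show that this spliced diagram genuinely lives in $\B^+ \cap \B^-$, and that the resulting factorisation is canonical modulo $\U \cap \V$, is where the technical heart of the argument will lie.
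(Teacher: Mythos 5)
Your high-level plan is reasonable, but the argument does not close. You correctly reduce to showing that the coimage-to-image comparison morphism is an isomorphism for every morphism $\underline f$ in $\underline\h$, which is indeed equivalent (in a preabelian category) to what the paper proves, namely that every epimorphism is a cokernel and every monomorphism is a kernel. However, the decisive step of your argument — splicing a pushout giving $\operatorname{Coker}(\underline f)$ and a pullback giving $\Ker(\underline f)$ across a common $\U\cap\V$-object to produce a single short exact sequence that simultaneously realises the coimage and the image — is not actually constructed, and the concluding appeal to "universal properties forcing the comparison to be an isomorphism" is circular: universal properties only give you an isomorphism once you have produced a factorisation that you independently verify to be both the coimage and the image, and that verification is exactly the content of abelianness. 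As written, the proposal names the obstacle ("the technical heart of the argument") without overcoming it.

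There is also a misdiagnosis of what the single-pair hypothesis buys. You suggest that in the twin case the witnesses for $\B^+$ and $\B^-$ live in different subcategories whereas in the single case both live in $\U\cap\V$. In fact even for a twin cotorsion pair $(\s,\T),(\U,\V)$ both middle terms live in $\W=\T\cap\U$; the real gain in the single case $\s=\U$, $\T=\V$ is that the third term of the $\B^-$ witnessing sequence lands in $\s=\U$ exactly, which is what the paper exploits in its Lemma~\ref{17}: from the epimorphism criterion $C_f\in\U$ and the commutative diagram~\eqref{F4} one reads off a short exact sequence $C\rightarrowtail W^A\twoheadrightarrow U^A$ with $W^A\in\W$ and $U^A\in\U=\s$, showing $C\in\B^-$. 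This fails in a genuine twin pair where $\U\supsetneq\s$, and is why the single-pair heart is abelian while the twin heart is only semi-abelian. To repair your argument, you would need to isolate precisely this mechanism — that the kernel of a deflation between heart objects, when the deflation is epimorphic in $\underline\h$, still lies in $\B^-$ — and then either follow the paper's route (lift to $C^+$, produce $a\colon C^+\to A$ with cokernel $\underline f$, and dualise) or spell out a genuine common-representative factorisation, neither of which the proposal currently does.
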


Moreover, following Nakaoka, we consider pairs of cotorsion pairs $(\s,\T)$ and $(\U,\V)$ in $\B$ such that $\s\subseteq \U$, we also call such a pair a \emph{twin cotorsion pair} (see Definition \ref{ctp} for more details). The notion of hearts is generalized to such pairs (see Definition \ref{5} for more details), and our second main result is the following, which is an analogue of \cite[Theorem 5.4]{N1}.

\begin{thmw}\label{0.1}
Let $(\s,\T),(\U,\V)$ be a twin cotorsion pair on $\B$. Then $\underline \h$ is semi-abelian.
\end{thmw}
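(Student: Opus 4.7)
\medskip

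\noindent\textbf{Proof proposal.} My plan is to transport Nakaoka's triangulated argument \cite[Theorem 5.4]{N1} to the exact setting. His proof rests on three primitives that all have direct analogues in an exact category with enough projectives and injectives: the approximation triangles supplied by a cotorsion pair are replaced by the defining short exact sequences, the octahedral axiom is replaced by the $3\times 3$-lemma in the exact category $\B$, and the $\Ext^1$-vanishing lifting property becomes the statement that whenever $\Ext^1_\B(U,V)=0$ for $U\in\U$ and $V\in\V$, any compatible inflation/deflation square admits a diagonal filler. Once these substitutions are fixed, the strategy follows the two-step pattern: first produce kernels and cokernels in $\underline{\h}$, and then show that the canonical coimage-to-image comparison morphism is simultaneously monic and epic.

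For the preabelian step I would take $\bar f : X\to Y$ represented by $f : X\to Y$ with $X,Y\in\B^+\cap\B^-$ and construct the cokernel as follows. Embed $X$, via the cotorsion pair $(\U,\V)$, into an object of $\V$; form the pushout of this inflation along $f$; then apply an approximation coming from $(\s,\T)$ to the pushout so as to land back in $\B^+\cap\B^-$. The universal property of this candidate in $\underline{\h}$ is verified by a routine diagram chase together with the cancellation principle that any morphism whose source or target lies in the subcategory being quotiented out factors through that subcategory, thanks to the relevant $\Ext^1$ vanishing. Kernels are built dually, using $(\s,\T)$ first and pulling back along $f$, then finishing with an approximation from $(\U,\V)$.

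For semi-abelianness I would splice the kernel diagram and the cokernel diagram of $\bar f$ into a single $3\times 3$ configuration of conflations in $\B$, extract the coimage-to-image comparison morphism from the middle row-column, and identify its outer terms as objects built out of $\s$ on one side and $\V$ on the other, so that the comparison becomes a bimorphism after passing to $\underline{\h}$. The step I expect to be the main obstacle is precisely this bookkeeping: in the triangulated case Nakaoka can invoke the octahedral axiom once and read off all four arrows he needs, whereas here each pushout or pullback has to be justified as producing an honest conflation, and the middle term must be tracked through the subcategory lattice $\s\subseteq\U$, $\V\subseteq\T$ so that it remains in $\B^+\cap\B^-$ modulo the appropriate intersection. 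The technical core of the proof will therefore be a short catalogue of closure lemmas for $\B^+$ and $\B^-$ under pushout along inflations from $\V$ and pullback along deflations from $\s$, which together with the cancellation principle above let one compress the diagram chase to the exact analogue of Nakaoka's.
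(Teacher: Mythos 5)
Your proposal takes a genuinely different route from the paper. The paper verifies semi-abelianness by checking the pull-back stability criterion of Definition \ref{semi} directly (this is also what Nakaoka does in \cite[Theorem 5.4]{N1}); you instead propose to show that the canonical coimage-to-image comparison is a bimorphism. Both characterizations are equivalent by Rump, so the target is fine. What the paper buys with its route is that it only needs to show one morphism is \emph{epi}, not that a comparison is simultaneously mono and epi, and it supports this with a small collection of lemmas (Lemma \ref{4.1}, Proposition \ref{eq2}, Lemma \ref{4.2}, Lemma \ref{4.4}) that have no analogue in your sketch.

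There are two concrete gaps in the sketch as written. First, the cokernel construction. You propose to embed $X$ "via the cotorsion pair $(\U,\V)$ into an object of $\V$" and push out along $f$. For a degenerate twin pair $(\U,\V),(\U,\V)$ this agrees with the paper. But for a genuine twin pair $(\s,\T),(\U,\V)$ the paper's Definition \ref{MF} starts from the sequence $A\rightarrowtail W^A\twoheadrightarrow S^A$ that exhibits $A\in\B^-$, so the pushout has cokernel in $\s$, not in $\U$. This is not cosmetic: Lemma \ref{4.1}, which says that any cokernel in $\underline{\h}$ is represented by an inflation whose cokernel lies in $\s$, is exactly what makes the later pull-back argument work, and it would fail if you only had cokernel in $\U$. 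Second, the "$3\times 3$ splicing" step. Kernels and cokernels in $\underline{\h}$ are not edges of a single conflation in $\B$; each is obtained by a pushout or pullback followed by the $(-)^+$ or $(-)^-$ reflection of Definitions \ref{re} and \ref{10}. Thus the comparison morphism does not literally sit in the middle of a $3\times 3$ diagram of conflations, and the argument cannot be compressed into one application of the $3\times 3$ lemma. The paper's substitute for that compression is Lemma \ref{4.4}: given a pull-back in $\underline{\h}$, find $X\in\B^-$ fitting into a short exact sequence $X\rightarrowtail B\twoheadrightarrow U$ with $U\in\U$, pass to $X^+\in\h$, and use the bijectivity of $\Hom_{\uB}(\underline{x^+},-)$ (Proposition \ref{8}) to induce a map $X^+\to A$ and deduce that $\alpha$ is epi. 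That step, together with Lemma \ref{4.2}, is the technical core and is not "routine bookkeeping"; your sketch is missing it.
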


We will first prove $\underline \h$ is preabelian in Section 3 and then show the theorem above in Section 5.

The notion of semi-abelian category (see Definition \ref{semi}) was introduced by Rump \cite{R}, as a special class of preabelian categories. A especially nice class of semi-abelian categories is called integral (see Definition \ref{inter}, and see \cite[\S 2]{R} for examples). Our third main theorem gives sufficient conditions for hearts to be integral. We will show it in Section 6.

\begin{thmw}\label{0.4}
Let $(\s,\T),(\U,\V)$ be a twin cotorsion pair on $\B$ satisfying
$$\U\subseteq \s * \T, \text{ } {\mathcal P}\subseteq \W \quad \text{or} \quad \T\subseteq \U * \V, \text{ } {\mathcal I}\subseteq \W.$$
Then $\underline \h$ is integral.
\end{thmw}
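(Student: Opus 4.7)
The plan is to exploit the evident duality between the two hypotheses: passing to the opposite exact category $\B^{\op}$ interchanges the roles of $(\s, \T)$ and $(\V, \U)$ in the twin structure, swaps projectives with injectives, and turns the first hypothesis into the second, while integrality is a self-dual property of a preabelian category. It therefore suffices to establish the theorem under the first hypothesis, namely $\U \subseteq \s * \T$ and ${\mathcal P} \subseteq \W$ (with $\W := \T \cap \U$, the core of the twin cotorsion pair).

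By Theorem \ref{0.1} we already know $\underline \h$ is semi-abelian, so what remains is to show that pullbacks preserve epimorphisms and, dually, that pushouts preserve monomorphisms. I would first translate these properties into statements in $\B$ using the characterizations of mono- and epimorphisms in $\underline \h$ developed in Sections 3 and 5: a morphism $\underline f : X \to Y$ is epic precisely when a suitable conflation associated to $f$ admits a ``certificate'' in $\W$ (and dually for monic). Given an epic $\underline f : X \to Y$ and an arbitrary $\underline g : Z \to Y$, I would lift to representatives in $\B$ and construct the pullback of $\underline f$ along $\underline g$ via a pullback square in $\B$; the fact that $X, Y, Z \in \B^+ \cap \B^-$ furnishes the auxiliary conflations needed to carry out the construction.

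The hypothesis $\U \subseteq \s * \T$ enters at the crucial step of showing that the pullback projection remains epic. Producing the epimorphism certificate for the projection requires decomposing an auxiliary object of $\U$, arising from applying the cotorsion pair $(\U,\V)$ to an intermediate object in the diagram, as an extension of an object of $\T$ by an object of $\s$; this is exactly what $\U \subseteq \s * \T$ permits. The hypothesis ${\mathcal P} \subseteq \W$ then ensures that whenever one introduces a projective cover to emulate a triangulated rotation, the cover sits inside $\W$ and so vanishes in the heart, so that the refinement does not disturb the class of the relevant objects in $\underline \h$.

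The main obstacle will be assembling the exact-category substitute for Nakaoka's octahedral argument in \cite{N1}. In a triangulated category one rotates and pastes octahedra freely, but in an exact category each such step requires checking that new objects land in the prescribed classes among $\s$, $\T$, $\U$, $\V$, $\W$, and that new morphisms are genuinely deflations or inflations. I expect the cleanest route is to isolate a single refinement lemma of the form: under the standing hypotheses, any conflation $A \rightarrowtail B \twoheadrightarrow C$ with $C \in \U$ can be refined, modulo $\W$, into an $\s$-part and a $\T$-part compatibly with a given morphism into $B$. Once such a lemma is available, verifying integrality should reduce to a direct diagram chase modulo $\W$, with the dual statement (pushout of mono is mono) obtained either in parallel or via the duality noted in the first paragraph.
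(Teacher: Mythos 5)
Your high-level architecture is right: use duality to reduce to one hypothesis, lean on the semi-abelianness from Theorem \ref{0.1}, and build auxiliary data in $\B$ witnessing that the pull-back projection is epic. You also correctly locate where each hypothesis must enter: $\U \subseteq \s * \T$ is needed to split the object $C_d$ arising from the epimorphism $\delta$, and $\mathcal P \subseteq \W$ is what keeps the syzygy of an $\s$-object inside $\B^-$ so it can serve as the witness object. But the proposal stops at the point where the real work begins.

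Concretely, two gaps. First, on the duality reduction: the opposite-category trick swaps the first hypothesis for the second and swaps left integral for right integral, so by itself it does \emph{not} reduce the problem to proving left integrality under the first hypothesis — you would still need both left and right under one hypothesis. The paper closes this by citing Rump's Proposition 6 (for a semi-abelian category, left integral is equivalent to right integral), which is a nontrivial input you do not mention. Second, and more seriously, the whole engine of the proof — the replacement for Nakaoka's octahedral manipulations — is not supplied. You speculate about a ``refinement lemma'' but never state it precisely, let alone prove it, and the suggestion of ``constructing the pullback of $\underline f$ along $\underline g$ via a pullback square in $\B$'' is not what the paper does: the paper instead proves Lemma \ref{4.4}, a sufficient criterion in terms of an auxiliary object $X \in \B^-$ with an inflation $X \rightarrowtail B$ whose cokernel lies in $\U$, and then, in Theorem \ref{5.1}, fabricates such an $X$ (namely $\Omega Q$ for a suitable $Q \in \s$) by a long chain of push-outs, pull-backs, projective covers, and comparisons of approximations, each step checked to land in the right class. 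Corollary \ref{eq3} (characterizing epimorphisms via $C_f \in \U$, not via a ``$\W$-certificate'' as you write) and the projectivity of $\Omega Q$'s cover are the precise points where $\U \subseteq \s*\T$ and $\mathcal P \subseteq \W$ enter. Without this construction, the argument does not exist; as written, the proposal is a correct reading of the target and a plausible outline, but not a proof.
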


Another nice class of semi-abelian categories is almost abelian categories. For example, any torsion class associated with a tilting module is almost abelian \cite{CF}. Our fourth main theorem gives sufficient conditions for hearts to be almost abelian. We will show it in Section 7.

\begin{thmw}\label{0.5}
Let $(\s,\T),(\U,\V)$ be a twin cotorsion pair on $\B$ satisfying
$$\U\subseteq  \T \text{ or }\T\subseteq \U.$$
Then $\underline \h$ is integral and almost abelian.
\end{thmw}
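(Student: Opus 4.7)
The plan is to reduce integrality to Theorem~\ref{0.4} by checking its hypotheses, and then to obtain almost-abelianness by combining with semi-abelianness (Theorem~\ref{0.1}), either via Rump's characterization that a preabelian category is almost-abelian if and only if it is integral and semi-abelian, or by a direct construction in the spirit of the rest of the paper.

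Assume $\U\subseteq\T$; the case $\T\subseteq\U$ is dual via passage to $\B^{\op}$, where the twin cotorsion pair becomes $(\V,\U),(\T,\s)$ and the inclusion flips. Then $\W=\T\cap\U=\U$. Every projective $P\in\mathcal{P}$ lies in $\U$, since the defining short exact sequence $V_P\rightarrowtail U_P\twoheadrightarrow P$ splits ($\Ext^1(P,V_P)=0$), exhibiting $P$ as a summand of $U_P\in\U$; hence $\mathcal{P}\subseteq\W$. Moreover $\U\subseteq\T\subseteq\s*\T$ via the trivial extension $0\rightarrowtail T\to T\twoheadrightarrow 0$. Hence the first hypothesis of Theorem~\ref{0.4} is satisfied, giving integrality. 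When $\T\subseteq\U$, dually $\mathcal{I}\subseteq\W=\T$ and $\T\subseteq\U\subseteq\U*\V$, so the second alternative of Theorem~\ref{0.4} applies.

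For almost-abelianness, having already obtained semi-abelianness from Theorem~\ref{0.1} and integrality above, one option is to invoke Rump's equivalence. For a self-contained argument, one instead verifies directly that strict epimorphisms in $\underline{\h}$ are stable under pullback. The plan is to lift a strict epi $\bar f\colon\bar X\to\bar Y$ and a test morphism $\bar g\colon\bar Z\to\bar Y$ to representatives $f,g$ in $\B$ between objects of $\B^+\cap\B^-$, combine the short exact sequences witnessing these memberships with the pullback in $\B$ via the exact-category analogue of the octahedral axiom, and verify that the result descends to the correct pullback in $\underline{\h}$ with the projection still strict epi. The dual statement for strict monos then follows by the duality above.

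The main obstacle in the direct approach is the careful tracking of subcategory memberships along the chain of short exact sequences constructed in $\B$. The hypothesis $\U\subseteq\T$ collapses $\W$ to $\U$ and forces the intermediate extension terms to lie in $\W$ automatically, which is the extra piece of information, beyond what Theorem~\ref{0.4} uses, that distinguishes almost-abelian hearts from merely integral ones.
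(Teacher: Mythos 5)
Your verification that the hypotheses of Theorem~\ref{0.4} follow from $\U\subseteq\T$ is correct and fills in what the paper leaves implicit at the end of Section~7: $\W=\U$ forces $\mathcal P\subseteq\W$, and $\U\subseteq\T\subseteq\s*\T$ trivially, so integrality does come from Theorem~\ref{5.1} exactly as you say. That part is fine.

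The almost-abelianness half has a genuine gap. The ``Rump's equivalence'' you invoke --- that a preabelian category is almost abelian if and only if it is integral and semi-abelian --- is not one of Rump's results, and in fact it is false. What Rump proves (and what the paper cites) are one-directional implications: almost abelian implies semi-abelian, integral implies semi-abelian, and within the semi-abelian class the left and right versions of each property coincide (Propositions 3 and 6 of \cite{R}). There is no converse. You can see the problem internally to this paper: the heart is \emph{always} semi-abelian (Theorem~\ref{4.3}), and Theorem~\ref{5.1} already gives integrality under the strictly weaker hypotheses $\U\subseteq\s*\T$, $\mathcal P\subseteq\W$; if your claimed equivalence held, Theorem~\ref{0.5} would follow for free under those weaker hypotheses and there would be no reason for the paper to impose the stronger condition $\U\subseteq\T$ (or $\T\subseteq\U$). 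The fact that the paper treats almost-abelianness as a separate theorem needing a stronger assumption is evidence that integral $+$ semi-abelian $\not\Rightarrow$ almost abelian.

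Your fallback ``self-contained'' sketch is too vague to bridge this gap. It does not isolate the actual work to be done, which in the paper is: (i) an almost-abelian analogue of Lemma~\ref{4.4} (this is Lemma~\ref{8.1}, whose proof relies on Proposition~\ref{r} to propagate the cokernel property along $x^+$), and (ii) showing that the morphism $\underline{x_B}$ produced by the pullback construction of Theorem~\ref{4.3} is not merely epi but a genuine cokernel, namely the cokernel of $\underline{k_{x_B}}$. Step (ii) is where $\U\subseteq\T$ is really used: it gives $\h=\B^-$, so $K_{x_B}\in\h$, and then the left $\T$-approximation $a\colon K_{x_B}\rightarrowtail W_B$ (coming from $\Ext^1_\B(\s,\T)=0$) is what lets one show that any $\underline r$ with $\underline{r k_{x_B}}=0$ factors through $\underline{x_B}$. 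Your remark that ``$\U\subseteq\T$ forces the intermediate extension terms to lie in $\W$ automatically'' gestures at this but does not identify the mechanism. Without (i) and (ii) spelled out, the direct route is not a proof.
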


Finally, we consider a special twin cotorsion pair $(\s,\T),(\T,\V)$, note that this is an analog of TTF theory and recollement. Then we have the following theorem which gives a more explicit description of the heart and can be regarded as an analog of \cite[{Theorem 5.7}]{BM}. We will prove it in Section 9.

\begin{thmw}\label{0.0}
Let $(\s,\T),(\U,\V)$ be a twin cotorsion pair on $\B$ such that $\T=\U$. Let $R$ denote the class of regular morphisms in $\B/\T$ and $(\B/\T)_{R}$ denote the localisation of $\underline \h=\B/\T$ at $R$, then
$$(\B/\T)_{R} \simeq \mod (\Omega \s/\mathcal P)$$
where $\Omega \s$ consists of objects $\Omega S$ such that there exists a short exact sequence
$$\Omega S\rightarrowtail P\twoheadrightarrow S \text{ } (P\in \mathcal P, S\in \mathcal S).$$
\end{thmw}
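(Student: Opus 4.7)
The plan is to exhibit an explicit functor $F \colon \B/\T \to \mod(\Omega\s/\mathcal P)$ built from an $\Ext^1$-interpretation of morphisms in the heart, and then check that it descends to the required equivalence after inverting regular morphisms.

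The identification $\underline\h = \B/\T$ in the theorem statement is visible once one notes that $\U = \T$ forces $\W := \T \cap \U = \T$, that $\mathcal P \subseteq \T$ because $\mathcal P \subseteq \U$, and that $\V \subseteq \T$ automatically for a twin cotorsion pair (since $\V = \U^\perp \subseteq \s^\perp = \T$). With these inclusions the two resolutions of any $B$ (namely $T_B \rightarrowtail S_B \twoheadrightarrow B$ from $(\s,\T)$ and $V_B \rightarrowtail U_B \twoheadrightarrow B$ from $(\T,\V)$) have all entries already in $\W$, so $\B^+ \cap \B^- = \B$ and $\underline\h = \B/\W = \B/\T$.

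Next I would carry out a key computation: for $S \in \s$ with syzygy sequence $\Omega S \rightarrowtail P \twoheadrightarrow S$ ($P$ projective),
\[
\Hom_{\B/\mathcal P}(\Omega S, B) \;\cong\; \Hom_{\B/\T}(\Omega S, B) \;\cong\; \Ext^1_\B(S, B).
\]
The rightmost isomorphism is the tail of the long exact $\Hom(-,B)$ sequence using $\Ext^1_\B(P, B) = 0$. For the leftmost, I would apply $\Hom(-,T)$ to the syzygy sequence for $T \in \T$: the vanishing $\Ext^1_\B(S, T) = 0$ forces every map $\Omega S \to T$ to extend along $\Omega S \rightarrowtail P$, so ``factoring through $\T$'' collapses to ``factoring through $P$''. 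With these identifications, $F(B) := \Hom_{\B/\mathcal P}(-, B)|_{\Omega\s/\mathcal P}$ is well-defined and factors through $\B/\T$.

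To show $F(B) \in \mod(\Omega\s/\mathcal P)$, I would interlock both cotorsion pairs. Start with $B \rightarrowtail V^B \twoheadrightarrow T^B$ from $(\T,\V)$, resolve $T^B$ via $(\s,\T)$ as $T^1 \rightarrowtail S_0 \twoheadrightarrow T^B$ with $S_0 \in \s$, and form the pullback $Y = V^B \times_{T^B} S_0$, producing short exact sequences $B \rightarrowtail Y \twoheadrightarrow S_0$ and $T^1 \rightarrowtail Y \twoheadrightarrow V^B$. Since $\s \subseteq \U = \T$ we have $\Ext^1_\B(\s, \V) = 0$, and combined with $\Ext^1_\B(\s, \T) = 0$ this gives $\Ext^1_\B(S, Y) = 0$ for every $S \in \s$. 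The connecting map from $B \rightarrowtail Y \twoheadrightarrow S_0$ then yields a natural surjection $\Hom_\B(-, S_0)|_\s \twoheadrightarrow \Ext^1_\B(-, B)|_\s$. Iterating the same construction with $Y$ in place of $B$ produces $S_1 \in \s$ whose $\Hom$-functor surjects onto the kernel, and translating along the $\Ext^1 \cong \Hom_{\B/\mathcal P}(\Omega(-), -)$ identification gives a presentation
\[
\Hom_{\Omega\s/\mathcal P}(-, \Omega S_1) \to \Hom_{\Omega\s/\mathcal P}(-, \Omega S_0) \to F(B) \to 0.
\]

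Finally I would check that $F$ sends regular morphisms to isomorphisms, inducing $\bar F \colon (\B/\T)_R \to \mod(\Omega\s/\mathcal P)$, and that $\bar F$ is an equivalence. Essential surjectivity is the less delicate side: given any $2$-term presentation, lift the map $\Omega S_1 \to \Omega S_0$ to $\B$ and manufacture a preimage by a pushout along the syzygy inflation. The main obstacle is full faithfulness of $\bar F$: morphisms in $(\B/\T)_R$ are fractions modulo regular mono/epi, and I would have to match these to natural transformations between the finite presentations above, in the style of the triangulated argument in \cite{BM}. The technical heart will be to translate the fraction data into the pullback description from the previous paragraph so that the $\Ext^1$-identification produces the required bijection of morphism sets.
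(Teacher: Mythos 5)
Your construction is genuinely different from the paper's. The paper first establishes (via \cite{BM}-type calculus-of-fractions results) that $(\B/\T)_R$ is abelian with enough projectives, that the projectives are exactly the image $[\Omega\s]$ of $\Omega\s$, and that $\Omega\s/\mathcal P = \underline{\Omega\s} \simeq [\Omega\s]$. It then defines $\Phi\colon (\B/\T)_R \to \mod[\Omega\s]$ by $B \mapsto [-,B]$ directly on the localised category and runs the standard ``abelian category with enough projectives is $\mod$ of its projectives'' argument using projective presentations. You instead build the functor from $\B/\T$ via the identification $\Hom_{\B/\mathcal P}(\Omega S,B)\cong\Hom_{\B/\T}(\Omega S,B)\cong\Ext^1_\B(S,B)$, show finite presentability by interlocking the two cotorsion-pair resolutions, and only then propose to descend to $(\B/\T)_R$. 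Your route is more explicit and computational; the paper's is shorter because it lets the abstract structure of the localisation do the heavy lifting. The identification $\Hom_{\B/\mathcal P}(\Omega S,B) = \Hom_{\B/\T}(\Omega S,B)$ is precisely the paper's preliminary lemma, and your finite-presentation computation via the pullback $Y$ is sound.

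However, as you flag yourself, the proposal has a genuine gap exactly where the paper spends its effort: you do not actually verify that $F$ inverts regular morphisms, nor that $\bar F$ is fully faithful. Showing $F$ inverts regular morphisms requires knowing that $\Hom_{\B/\T}(\Omega S,-)$ takes regular morphisms to bijections; the cleanest way to see this is the paper's chain $\underline{\Omega\s}\simeq[\Omega\s]$ plus the fact that $[\Omega S]$ is projective in the abelian localisation (Proposition 10.1), which already uses the calculus of fractions from \cite{BM} rather than being an elementary consequence of your $\Ext^1$-identification. And full faithfulness of $\bar F$ — matching fractions in $(\B/\T)_R$ to morphisms of finite presentations — is the core of the argument, not a bookkeeping step; here you have only a plan, not a proof. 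Once you accept that you need the \cite{BM} machinery anyway, it is considerably more economical to follow the paper: work in $(\B/\T)_R$ from the start, where fullness and faithfulness reduce to lifting morphisms along projective presentations $\Omega S_1 \to \Omega S_0 \to B \to 0$ via Yoneda, sidestepping fraction bookkeeping entirely.
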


In Section 2, we collect basic material on twin cotorsion pairs on $\B$. In Section 3-7 and 9, we prove our main results. In Section 8, we consider the cases when the heart of a twin cotorsion pair has enough projectives/injectives. In the last section we study some examples of twin cotorsion pairs.

\section{Preliminaries}

First we briefly review the important properties of exact categories. For more details, we refer to \cite{B}. Let $\mathcal A$ be an additive category, we call a pair of morphisms $(i,d)$ a \emph{weak short exact sequence} if $i$ is the kernel of $d$ and $d$ is the cokernel of $i$. Let $\mathcal E$ be a class of weak short exact sequences of $\mathcal A$, stable under isomorphisms, direct sums and direct summands. If a weak short exact sequence $(i,d)$ is in $\mathcal E$, we call it a \emph{short exact sequence} and denote it by
$$\xymatrix{X \;\ar@{>->}[r]^i &Y \ar@{->>}[r]^d &Z.}$$
We call $i$ an \emph{inflation} and $d$ a \emph{deflation}. The pair $(\mathcal A,\mathcal E)$ (or simply $\mathcal A$) is said to be an \emph{exact category} if it satisfies the following properties:

\begin{itemize}
\item[(a)] Identity morphisms are inflations and deflations.

\item[(b)] The composition of two inflations (resp. deflations) is an inflation (resp. deflation).

\item[(c)] If $\xymatrix{X \;\ar@{>->}[r]^i &Y \ar@{->>}[r]^d &Z}$ is a short exact sequence, for any morphisms $f : Z' \rightarrow Z$ and $g: X \rightarrow X'$, there are commutative diagrams
$$\xymatrix{
Y' \ar[d]_{f'} \ar@{->>}[r]^{d'} \ar@{}[dr]|{PB} & Z' \ar[d]^f\\
Y \ar@{->>}[r]_d & Z\\
} \quad \xymatrix{
X \ar[d]_{g} \ar@{}[dr]|{PO} \;\ar@{>->}[r]^i & Y \ar[d]^{g'}\\
X' \;\ar@{>->}[r]_{i'} & Y'\\
}$$
where $d'$ is a deflation and $i'$ is an inflation, the left square being a pull-back and the right being a push-out.
\end{itemize}

We introduce the following properties of exact category, the proofs of which can be find in \cite[{\S 2}]{B}:

\begin{prop}\label{PO}
Consider a commutative square
$$\xymatrix{
A \;\ar@{>->}[r]^{i} \ar[d]_f &B \ar[d]^{f'}\\
A' \;\ar@{>->}[r]_{i'} &B'
}
$$
in which $i$ and $i'$ are inflations. The following conditions are equivalent:
\begin{itemize}
\item[(a)] The square is a push-out.

\item[(b)] The sequence $\xymatrix{A \;\ar@{>->}[r]^-{\svecv{i}{-f}} &B\oplus A' \ar@{->>}[r]^-{\svech{f'}{i'}} &B'}$ is short exact.

\item[(c)] The square is both a push-out and a pull-back.

\item[(d)] The square is a part of a commutative diagram
$$\xymatrix{
A\; \ar@{>->}[r]^i \ar[d]_f &B \ar[d]^{f'} \ar@{->>}[r] &C \ar@{=}[d]\\
A'\; \ar@{>->}[r]_{i'} &B' \ar@{->>}[r] &C
}$$
with short exact rows.
\end{itemize}
\end{prop}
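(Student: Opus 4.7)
My plan is to prove the equivalences through the two small cycles (a) $\Rightarrow$ (b) $\Rightarrow$ (c) $\Rightarrow$ (a) and (a) $\Leftrightarrow$ (d).

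I would begin with the easy universal-property check (b) $\Rightarrow$ (c). Given the short exact sequence in (b), the fact that $\svech{f'}{i'}$ is the cokernel of $\svecv{i}{-f}$ yields the push-out universal property of the square: for any cocone $g\colon B\to X$, $h\colon A'\to X$ with $gi=hf$, the combined map $\svech{g}{h}\colon B\oplus A'\to X$ satisfies $\svech{g}{h}\svecv{i}{-f}=gi-hf=0$ and hence factors uniquely through $B'$. The fact that $\svecv{i}{-f}$ is the kernel of $\svech{f'}{i'}$ gives the pull-back universal property by the dual computation, and (c) $\Rightarrow$ (a) is a tautology.

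For (a) $\Leftrightarrow$ (d), I would invoke the push-out axiom (c) of the exact category, which for the inflation $i$ and morphism $f$ produces a canonical push-out square whose bottom edge is an inflation and whose two horizontal cokernels are canonically identified (a standard consequence of the construction; see \cite[\S 2]{B}). Starting from (a), the given square is isomorphic to this canonical push-out, whence it fits into the commutative diagram with short exact rows required by (d). Conversely, starting from (d), the map comparing (d) with the canonical push-out (induced by its universal property applied to $f'$ and $i'$) is an isomorphism by the short five-lemma applied to the shared cokernel $C$, so (d) itself realises the push-out.

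The main step, and where I expect the principal obstacle, is (a) $\Rightarrow$ (b). I would first verify that $\svecv{i}{-f}\colon A\to B\oplus A'$ is an inflation by factoring it as $(i\oplus 1_{A'})\circ \svecv{1}{-f}$, where $\svecv{1}{-f}$ is a split inflation with left inverse $\svech{1}{0}$ and $i\oplus 1_{A'}$ is a direct sum of inflations; the composition of inflations is an inflation. I would then form the cokernel $B''$ of $\svecv{i}{-f}$, giving a short exact sequence $A\rightarrowtail B\oplus A'\twoheadrightarrow B''$. The maps $B\xrightarrow{\svecv{1}{0}} B\oplus A'\twoheadrightarrow B''$ and $A'\xrightarrow{\svecv{0}{1}} B\oplus A'\twoheadrightarrow B''$ satisfy the push-out condition for $(i,f)$, so by (a) and uniqueness of push-outs there is an isomorphism $B''\cong B'$ under which the induced deflation becomes $\svech{f'}{i'}$, yielding the short exact sequence claimed in (b).
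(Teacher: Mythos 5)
The paper does not give its own proof of this proposition; it explicitly defers to B\"uhler (``the proofs of which can be find in \cite[\S 2]{B}''), so there is no internal argument to compare you against. Taken on its own terms, your cycle (a)\,$\Rightarrow$\,(b)\,$\Rightarrow$\,(c)\,$\Rightarrow$\,(a) is correct and is the standard argument: the factorisation $\svecv{i}{-f}=(i\oplus 1_{A'})\circ\svecv{1}{-f}$ to see that $\svecv{i}{-f}$ is an inflation, followed by identifying its cokernel with $B'$ by uniqueness of push-outs, is exactly the right move for (a)\,$\Rightarrow$\,(b), and the universal-property checks for (b)\,$\Rightarrow$\,(c) are fine.

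There is, however, a genuine gap in your treatment of (a)\,$\Leftrightarrow$\,(d). You take for granted that the push-out produced by the exact-category axiom ``has its two horizontal cokernels canonically identified'' and cite \cite[\S 2]{B} for this; but that claim \emph{is} the implication (a)\,$\Rightarrow$\,(d) for that push-out, so invoking it here is circular --- the axiom itself only supplies a push-out square in which $i'$ is an inflation and says nothing about $\Coker(i')$. Your (d)\,$\Rightarrow$\,(a) inherits the same problem: to run the five lemma you already need the canonical push-out to sit in a diagram of shape (d) with the same cokernel $C$, which is the very fact at issue. A self-contained route is available once (b) has been proved: play the short exact sequence $A\rightarrowtail B\oplus A'\twoheadrightarrow B'$ against the split sequence $A'\rightarrowtail B\oplus A'\twoheadrightarrow B$ and the row $A\rightarrowtail B\twoheadrightarrow C$ using Proposition~\ref{7} (the Noether isomorphism); this produces a deflation $B'\twoheadrightarrow C$ with kernel $i'$, which is exactly (d), and (d)\,$\Rightarrow$\,(a) then follows by the five-lemma comparison along the lines you sketch. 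Alternatively, since the paper itself simply cites B\"uhler wholesale, it would be cleaner to do the same than to reconstruct most of the argument while leaning on the reference for precisely the delicate step.
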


\begin{prop}\label{7}
\begin{itemize}
\item[(a)] If $\xymatrix{X \ar@{ >->}[r]^{i} &Y \ar@{->>}[r]^{d} &Z}$ and $\xymatrix{N \ar@{ >->}[r]^g &M\ar@{->>}[r]^{f} &Y}$ are two short exact sequences, then there is a commutative diagram of short exact sequences
$$\xymatrix{
N \ar@{ >->}[d] \ar@{=}[r] &N \ar@{ >->}[d]^g\\
Q \ar@{->>}[d] \;\ar@{>->}[r] &M \ar@{->>}[d]^f \ar@{->>}[r] &Z \ar@{=}[d]\\
X \;\ar@{>->}[r]_i &Y \ar@{->>}[r]_d &Z
}$$
where the lower-left square is both a push-out and a pull-back.

\item[(b)] If $\xymatrix{X \ar@{ >->}[r]^{i} &Y \ar@{->>}[r]^{d} &Z}$ and $\xymatrix{Y \ar@{ >->}[r]^{g} &K \ar@{->>}[r]^f &L}$ are two short exact sequences, then there is a commutative diagram of short exact sequences
$$\xymatrix{
X \ar@{=}[d] \ar@{ >->}[r]^{i} &Y \ar@{ >->}[d]^g \ar@{->>}[r]^{d} &Z \ar@{ >->}[d]\\
X \ar@{ >->}[r]  &K \ar@{->>}[r] \ar@{->>}[d]^f &R \ar@{->>}[d]\\
&L \ar@{=}[r] &L
}$$
where the upper-right square is both a push-out and a pull-back.
\end{itemize}
\end{prop}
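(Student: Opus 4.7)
Both parts are instances of a standard $3\times 3$-type lemma for exact categories, and since (a) and (b) are dual I focus on (a). The plan is to construct the new object $Q$ as a pull-back, extract the two missing short exact sequences from it, and then invoke Proposition~\ref{PO} to identify the resulting square as both Cartesian and coCartesian.

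Concretely, for (a), I form the pull-back of the deflation $f:M\twoheadrightarrow Y$ along the inflation $i:X\rightarrowtail Y$, which exists by axiom~(c) and yields an object $Q$ together with a deflation $Q\twoheadrightarrow X$ and a map $Q\to M$. Pulling back the sequence $N\rightarrowtail M\twoheadrightarrow Y$ along $i$ gives the upper-middle short exact sequence $N\rightarrowtail Q\twoheadrightarrow X$, since the kernel of the pulled-back deflation coincides with the kernel of the original. Next, by axiom~(b) the composite $M\twoheadrightarrow Y\twoheadrightarrow Z$ is a deflation; its kernel is identified with $Q$ via the universal property of the pull-back, giving the middle row $Q\rightarrowtail M\twoheadrightarrow Z$. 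Arranging $N,Q,X,M,Y,Z$ together with all the constructed and given morphisms produces the desired $3\times 3$ diagram, and Proposition~\ref{PO}(c) guarantees that the lower-left pull-back square is simultaneously a push-out.

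For (b), I dualize: form the push-out of $g:Y\rightarrowtail K$ along the deflation $d:Y\twoheadrightarrow Z$, which by axiom~(c) produces an inflation $Z\rightarrowtail R$ together with a map $K\to R$. Proposition~\ref{PO}(d) extends this square to a pair of exact rows with common cokernel $L$, yielding simultaneously the right-hand column $Z\rightarrowtail R\twoheadrightarrow L$ and the identification of $L$ as the cokernel of $K\twoheadrightarrow R$. The composite $X\rightarrowtail Y\rightarrowtail K$ is an inflation by axiom~(b), and the pull-back property of the push-out square (Proposition~\ref{PO}(c)) identifies $\ker(K\twoheadrightarrow R)$ with $\ker d=X$, delivering the middle row $X\rightarrowtail K\twoheadrightarrow R$.

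The main technical point, and the step I would handle most carefully, is the repeated identification of a kernel (resp.\ cokernel) of a freshly produced morphism with a prescribed object ($N$ and $X$ in (a); $X$ and $L$ in (b)). Each such identification uses the universal property of the square together with the fact from Proposition~\ref{PO}(c) that, in this setting, being a pull-back forces being a push-out. Once these identifications are pinned down, the remaining commutativities are routine chases from the definitions of kernel and cokernel.
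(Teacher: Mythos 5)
The paper does not give its own proof of this statement (it cites B\"uhler's notes), so I will assess your argument on its own terms. Your overall strategy---form the pull-back/push-out, extract the two new short exact sequences, and then invoke Proposition~\ref{PO} to see the square is bicartesian---is indeed the standard route, and your treatment of part (a) is essentially correct.

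However, part (b) has a genuine gap. You write the middle row as $X\rightarrowtail K\twoheadrightarrow R$, but nowhere do you establish that $K\to R$ is actually a \emph{deflation}; you only compute that its kernel is $X$. A morphism having the correct kernel does not make it a deflation, so the middle row is not yet a short exact sequence. The exact dual of what you did in (a) is what is needed: in (a) you argued that $Q$ is the kernel of the deflation $df$, hence $Q\rightarrowtail M$ is an inflation (kernel of a deflation); dually, in (b) you should argue that $R$ (the push-out of $g$ along $d$) coincides, via the universal property, with the cokernel of the composite inflation $gi:X\rightarrowtail K$, so that $K\to R$ is a deflation (cokernel of an inflation) with kernel $X$. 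Relatedly, the phrase ``the identification of $L$ as the cokernel of $K\twoheadrightarrow R$'' is garbled: the cokernel of a deflation is zero; what one actually gets from Proposition~\ref{PO}(d) is $L=\Coker(Z\rightarrowtail R)$. Finally, a smaller logical point that occurs in both parts: you invoke ``Proposition~\ref{PO}(c) guarantees the square is a push-out,'' but (c) is one of the \emph{equivalent conclusions}, not a usable hypothesis. The correct chain is: you have verified condition (d) (the square sits between the two short exact rows with identity on the third column), hence by the equivalence in Proposition~\ref{PO} condition (c) holds, i.e.\ the square is simultaneously a push-out and a pull-back.
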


Let $\mathcal A$ be an exact category, an object $P$ is called projective in $\mathcal A$ if for any deflation $f:X\rightarrow Y$ and any morphism $g:P\rightarrow Y$, there exists a morphism $h:P\rightarrow X$ such that $g=fh$. $\mathcal A$ is said to have enough projectives if for any object $X\in \mathcal A$, there is an object $P$ which is projective in $\mathcal A$ and a deflation $p:P\rightarrow X$. Injective objects and having enough injectives are defined dually.

Throughout this paper, let $\B$ be a Krull-Schmidt exact category with enough projectives and injectives. Let $\mathcal P$ (resp. $\mathcal I$) be the full subcategory of projectives (resp. injectives) of $\B$.

\begin{defn}\label{2}
Let $\U$ and $\V$ be full additive subcategories of $\B$ which are closed under direct summands. We call $(\U,\V)$ a \emph{cotorsion pair} if it satisfies the following conditions:
\begin{itemize}
\item[(a)] $\Ext^1_\B(\U,\V)=0$.

\item[(b)] For any object $B\in \B$, there exits two short exact sequences
\begin{align*}
V_B\rightarrowtail U_B\twoheadrightarrow B,\quad
B\rightarrowtail V^B\twoheadrightarrow U^B
\end{align*}
satisfying $U_B,U^B\in \U$ and $V_B,V^B\in \V$.
\end{itemize}
\end{defn}

By definition of a cotorsion pair, we can immediately conclude:
\begin{lem}\label{3}
Let $(\U,\V)$ be a cotorsion pair of $\B$, then
\begin{itemize}
\item[(a)] $B$ belongs to $\U$ if and only if $\Ext^1_\B(B,\V)=0$.

\item[(b)] $B$ belongs to $\V$ if and only if $\Ext^1_\B(\U,B)=0$.

\item[(c)] $\U$ and $\V$ are closed under extension.

\item[(d)] $\mathcal P \subseteq \U$ and $\mathcal I \subseteq \V$.

\end{itemize}
\end{lem}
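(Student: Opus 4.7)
The plan is to establish (a), (b), (c), (d) in order, relying throughout on the two approximation sequences supplied by Definition \ref{2}(b) and on the identification of $\Ext^1$-classes with equivalence classes of short exact sequences in $\B$.

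For (a), the forward direction is a direct restatement of the Ext-vanishing axiom $\Ext^1_\B(\U,\V)=0$. For the converse, I will assume $\Ext^1_\B(B,\V)=0$ and look at the approximation sequence
$$V_B\rightarrowtail U_B\twoheadrightarrow B$$
with $V_B\in\V$ and $U_B\in\U$. Since $V_B\in\V$, the hypothesis gives $\Ext^1_\B(B,V_B)=0$, so this sequence represents the zero class in $\Ext^1$ and therefore splits; thus $B$ is a direct summand of $U_B\in\U$, and since $\U$ is closed under direct summands, $B\in\U$. Part (b) follows by the dual argument applied to the sequence $B\rightarrowtail V^B\twoheadrightarrow U^B$, where $\Ext^1_\B(U^B,B)=0$ forces the splitting and realises $B$ as a direct summand of $V^B\in\V$.

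For (c), given a short exact sequence $U_1\rightarrowtail B\twoheadrightarrow U_2$ with $U_1,U_2\in\U$, I will invoke for any $V\in\V$ the long exact $\Ext$-sequence
$$\Ext^1_\B(U_2,V)\longrightarrow \Ext^1_\B(B,V)\longrightarrow \Ext^1_\B(U_1,V),$$
whose outer terms vanish by Definition \ref{2}(a). Hence $\Ext^1_\B(B,V)=0$ for every $V\in\V$, and the already proved (a) delivers $B\in\U$. The closure of $\V$ under extensions is dual, using (b). Part (d) is then immediate: any $P\in\mathcal P$ satisfies $\Ext^1_\B(P,-)=0$, so in particular $\Ext^1_\B(P,\V)=0$, and (a) places $P$ in $\U$; the inclusion $\mathcal I\subseteq\V$ is dual via (b).

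The argument is essentially formal, and I do not anticipate a serious obstacle. The only technical inputs are the identification of $\Ext^1_\B$-classes with equivalence classes of short exact sequences (used for the splitting arguments in (a) and (b)) and the existence of the long exact $\Ext$-sequence attached to a short exact sequence in an exact category (used in (c)); both are standard facts recalled in \cite{B}, and they are the sole point at which one appeals to the full exact-category machinery rather than to the definition of a cotorsion pair alone.
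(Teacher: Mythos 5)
The paper offers no explicit proof of this lemma---it is simply asserted to follow ``by definition of a cotorsion pair''---so there is no textual argument to compare against. Your proof is the standard one, and it is correct: the forward direction of (a) is the Ext-vanishing axiom; the converse uses the splitting of $V_B\rightarrowtail U_B\twoheadrightarrow B$ when $\Ext^1_\B(B,V_B)=0$, together with closure of $\U$ under direct summands (explicitly built into Definition \ref{2}); (b) is dual; (c) uses the three-term exactness of $\Ext^1_\B(U_2,V)\to\Ext^1_\B(B,V)\to\Ext^1_\B(U_1,V)$, which holds for Yoneda $\Ext^1$ in any exact category and does not require the full derived machinery; and (d) follows by specialising (a) and (b) to $\mathcal P$ and $\mathcal I$. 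The only point worth flagging is that you should note that closure under direct summands is part of the data in Definition \ref{2}, not a consequence to be derived; your phrasing is consistent with this, so the argument is complete.
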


\begin{defn}\label{ctp}
A pair of cotorsion pairs $(\s,\T)$, $(\U,\V)$ on $\B$ is called a \emph{twin cotorsion pair} if it satisfies:
$$\s\subseteq \U.$$
\end{defn}

By definition and Lemma \ref{3} this condition is equivalent to $\Ext^1_\B(\s,\V)=0$, and also to $\V\subseteq \T$.

\begin{rem}
\begin{itemize}
\item[(a)] We also regard a cotorsion pair $(\U,\V)$ as a degenerated case of a twin cotorsion pair $(\U,\V),(\U,\V)$.
\item[(b)] If $(\s,\T),(\U,\V)$ is a twin cotorsion pair on $\B$, then $(\V^{\op},\U^{\op}),(\T^{\op},\s^{\op})$ is a twin cotorsion pair on $\B^{\op}$.
\end{itemize}
\end{rem}

\begin{defn}\label{5}
For any twin cotorsion pair $(\s,\T),(\U,\V)$, put
$$\W:=\T\cap \U.$$
\begin{itemize}
\item[(a)] $\B^+$ is defined to be the full subcategory of $\B$, consisting of objects $B$ which admits a short exact sequence
$$V_B\rightarrowtail U_B\twoheadrightarrow B$$
where $U_B\in \W$ and $V_B\in \V$.

\item[(b)] $\B^-$ is defined to be the full subcategory of $\B$, consisting of objects $B$ which admits a short exact sequence
$$B\rightarrowtail T^B\twoheadrightarrow S^B$$
where $T^B\in \W$ and $S^B\in \s$.
\end{itemize}
\end{defn}

By this definition we get $\s \subseteq \U \subseteq \B^-$ and $\V\subseteq \T \subseteq \B^+$.

\begin{defn}\label{6}
Let $(\s,\T)$, $(\U,\V)$ be a twin cotorsion pair of $\B$, we denote the quotient of $\B$ by $\W$ as $\uB:=\B/\W$. For any morphism $f\in \Hom_\B(X,Y)$, we denote its image in $ \Hom_{\uB}(X,Y)$ by $\underline f$. And for any subcategory $\mathcal C$ of $\B$, we denote by $\underline {\mathcal C}$ the subcategory of $\uB$ consisting of the same objects as $\mathcal C$. Put
$$\h:=\B^+\cap\B^-.$$
Since $\h\supseteq \W$, we have an additive full quotient subcategory
$$\underline \h:=\h/\W$$
which we call the \emph{heart} of twin cotorsion pair $(\s,\T)$, $(\U,\V)$.\\
The heart of a cotorsion pair $(\U,\V)$ is defined to be the heart of twin cotorsion pair $(\U,\V)$, $(\U,\V)$.
\end{defn}

We prove some useful lemmas for a twin cotorsion pair $(\s,\T),(\U,\V)$ in the following:

\begin{lem}\label{5.0}
Let $(\s,\T),(\U,\V)$ be a twin cotorsion pair on $\B$, then
\begin{itemize}
\item[(a)]  $\B^-$ is closed under direct summands. Moreover, if $X\in \B^-$ admits a short exact sequence
$$X\rightarrowtail W\twoheadrightarrow U$$
where $W\in \W$ and $U\in \U$, then any direct summand $X_1$ of $X$ admits a short exact sequence
$$X_1\rightarrowtail W\twoheadrightarrow Y$$
where $Y\in \U$.
\item[(b)] $\B^+$ is closed under direct summands. Moreover, if $X\in \B^+$ admits a short exact sequence
$$V\rightarrowtail W'\twoheadrightarrow X$$
where $W\in \W$ and $V\in \V$, then any direct summand $X_2$ of $X$ admits a short exact sequence
$$Z\rightarrowtail W'\twoheadrightarrow X_2$$
where $Z\in \V$.
\end{itemize}
\end{lem}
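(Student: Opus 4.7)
Parts (a) and (b) are dual under the op-duality noted in the preceding Remark (where $\B^+$ and $\B^-$ interchange), so I focus on (a), which has two claims: the closure of $\B^-$ and the ``moreover''.

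I would prove the moreover first. Writing $X=X_1\oplus X_2$, apply Proposition~\ref{7}(b) to the split short exact sequence $X_1\rightarrowtail X\twoheadrightarrow X_2$ and the given $X\rightarrowtail W\twoheadrightarrow U$; this yields a $3\times 3$ diagram whose middle row is the desired $X_1\rightarrowtail W\twoheadrightarrow Y$ and whose right column is an auxiliary $X_2\rightarrowtail Y\twoheadrightarrow U$. To see $Y\in\U$, Lemma~\ref{3}(a) reduces the task to showing $\Ext^1_\B(Y,V)=0$ for every $V\in\V$, and the long exact sequence from $X_1\rightarrowtail W\twoheadrightarrow Y$ combined with $\Ext^1_\B(W,V)=0$ reduces this further to surjectivity of $\Hom_\B(W,V)\to\Hom_\B(X_1,V)$. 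Given $f\colon X_1\to V$, form $\tilde f=(f,0)\colon X\to V$ and lift $\tilde f$ along $X\rightarrowtail W$; the obstruction sits in $\Ext^1_\B(U,V)$, which vanishes because $(\U,\V)$ is a cotorsion pair.

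For the closure claim, start from the defining sequence $X\rightarrowtail W\twoheadrightarrow S$ of $X\in\B^-$ (so $W\in\W$, $S\in\s$) and form the pushout of $X\rightarrowtail W$ along the split deflation $X\twoheadrightarrow X_1$. By Proposition~\ref{PO} this produces a short exact sequence $X_1\rightarrowtail Z\twoheadrightarrow S$, and inspection of the pushout exhibits $X_2\rightarrowtail W\twoheadrightarrow Z$ as short exact as well. Running the moreover's extension-of-maps argument with the roles of $X_1$ and $X_2$ interchanged, every $g\colon X_2\to V$ lifts to $W\to V$, so $\Hom_\B(W,V)\to\Hom_\B(X_2,V)$ is surjective, giving $\Ext^1_\B(Z,V)=0$ and hence $Z\in\U$ by Lemma~\ref{3}(a).

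Now apply the cotorsion pair $(\s,\T)$ to $Z$ to obtain $Z\rightarrowtail T^Z\twoheadrightarrow S^Z$ with $T^Z\in\T$ and $S^Z\in\s$. Since both $Z$ and $S^Z$ lie in $\U$, Lemma~\ref{3}(c) yields $T^Z\in\U$, whence $T^Z\in\W$. A final application of Proposition~\ref{7}(b) to $X_1\rightarrowtail Z\twoheadrightarrow S$ and $Z\rightarrowtail T^Z\twoheadrightarrow S^Z$ produces $X_1\rightarrowtail T^Z\twoheadrightarrow R$ where $R$ sits in $S\rightarrowtail R\twoheadrightarrow S^Z$; extension-closure of $\s$ gives $R\in\s$, exhibiting $X_1\in\B^-$. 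Part (b) is handled dually. The main technical step is the identification $Z\in\U$, which rests on the same $\Hom$-extension trick that drives the moreover.
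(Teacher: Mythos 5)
Your treatment of the ``moreover'' matches the paper's: identify $X_1\rightarrowtail W$ as an inflation with cokernel $Y$ (the paper does this directly, you via Proposition~\ref{7}(b)), then show $\Ext^1_\B(Y,\V)=0$ by lifting any $f\colon X_1\to V$ through $(f,0)\colon X\to V$ using $\Ext^1_\B(U,V)=0$ and $\Ext^1_\B(W,V)=0$. That part is fine.

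Where you genuinely add something is the closure claim. The paper's proof stops once it has $X_1\rightarrowtail W\twoheadrightarrow Y$ with $Y\in\U$, but that is not a defining sequence for $\B^-$ (one needs the cokernel in $\s$, not merely $\U$), so the statement ``$\B^-$ is closed under summands'' is not literally concluded there. You close the gap correctly: from $X\rightarrowtail W\twoheadrightarrow S$ you produce $X_1\rightarrowtail Z\twoheadrightarrow S$ and $X_2\rightarrowtail W\twoheadrightarrow Z$ (this is exactly the $3\times 3$ diagram of Proposition~\ref{7}(b) applied to $X_2\rightarrowtail X\twoheadrightarrow X_1$ and $X\rightarrowtail W\twoheadrightarrow S$, which is cleaner to cite than ``inspection of the pushout''), show $Z\in\U$ by the same Hom-extension argument, resolve $Z\rightarrowtail T^Z\twoheadrightarrow S^Z$ and use extension-closure of $\U$ to get $T^Z\in\W$, then splice by Proposition~\ref{7}(b) to obtain $X_1\rightarrowtail T^Z\twoheadrightarrow R$ with $R\in\s$. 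This is a correct and self-contained completion; note one could alternatively finish by invoking $Z\in\U\subseteq\B^-$ together with Lemma~\ref{P1}(b) applied to $X_1\rightarrowtail Z\twoheadrightarrow S$, but that lemma appears later in the paper, so your explicit splicing is the right choice at this point in the development.
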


\begin{proof}
We only show (a), (b) is by dual.\\
Suppose $X_1\oplus X_2$ admits a short exact sequence
$$\xymatrix{X_1\oplus X_2 \ar@{ >->}[rr]^-{\svech{x_1}{x_2}} &&W \ar@{->>}[r] &U}$$
where $U\in\U$ and $W\in \W$. Then $x_1:X_1\rightarrow W$ is also an inflation by the properties of exact category. Let $x_1$ admit a short exact sequence
$$\xymatrix{X_1 \ar@{ >->}[r]^{x_1} &W \ar@{->>}[r] &Y.}$$
For any morphism $f:X_1\rightarrow V_0$ where $V_0\in \V$, consider a morphism $\svech{f}{0}:X_1\oplus X_2\rightarrow V_0$. Since $\Ext^1_\B(U,V_0)=0$, ${\svech{x_1}{x_2}}$ is a left $\V$-approximation of $W$, there exists a morphism $g:W\rightarrow V_0$ such that $\svech{f}{0}=\svech{gx_1}{gx_2}$.
$$\xymatrix{
&X_1 \ar@{ >->}[rr]^{x_1} \ar@/_/[ddl]_f \ar[d]^{\svecv{1}{0}} &&W \ar@{->>}[r] \ar@{=}[d] &Y \ar[d]\\
&X_1\oplus X_2 \ar@{ >->}[rr]^-{\svech{x_1}{x_2}} \ar[dl]^-{\svech{f}{0}} &&W \ar@{->>}[r] \ar@{.>}@/^15pt/[dlll]^g &U\\
V_0
}
$$
Hence $\Hom_\B(x_1,V_0):\Hom_\B(W,V_0)\rightarrow \Hom_\B(X_1,V_0)$ is surjective. By the following exact sequence
$$\Hom_\B(W,V_0)\xrightarrow{\Hom_\B(x_1,V_0)} \Hom_\B(X_1,V_0)\xrightarrow{0} \Ext^1_\B(Y,V_0)\rightarrow \Ext^1_\B(W,V_0)=0$$
we have $\Ext^1_\B(Y,V_0)=0$, which implies $Y\in \U$.
\end{proof}

\begin{lem}\label{P1}
\begin{itemize}
\item[(a)] If $\xymatrix{A \ar@{ >->}[r]^{f} &B \ar@{->>}[r]^g &U}$ is a short exact sequence in $\B$ with $U\in \U$, then $A\in \B^-$ implies $B\in \B^-$.

\item[(b)] If $\xymatrix{A \ar@{ >->}[r]^{f} &B \ar@{->>}[r]^g &S}$ is a short exact sequence in $\B$ with $S\in \s$, then $B\in \B^-$ implies $A\in \B^-$.

\end{itemize}
\end{lem}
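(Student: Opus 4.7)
The plan is to prove both parts by iterated applications of Proposition \ref{7}(b), exploiting that $\s$ and $\U$ are closed under extensions (Lemma \ref{3}(c)) together with the already-noted inclusion $\U\subseteq \B^-$ (which follows from applying the cotorsion pair $(\s,\T)$ to any $U\in\U$, since the resulting $T^U\in\T$ is then an extension of $S^U\in\s\subseteq\U$ by $U\in\U$, forcing $T^U\in\W$).

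I will dispose of (b) first, since it is the cleaner direction. Given $B\in \B^-$, fix a short exact sequence $B\rightarrowtail W_B\twoheadrightarrow S_B$ with $W_B\in \W$ and $S_B\in \s$, and stack it on the hypothesized $A\rightarrowtail B\twoheadrightarrow S$. Proposition \ref{7}(b) yields a $3\times 3$ diagram whose rightmost column reads $S\rightarrowtail Y\twoheadrightarrow S_B$, where $Y$ is the cokernel of the composite inflation $A\rightarrowtail W_B$. Extension-closure of $\s$ forces $Y\in\s$, so the middle row $A\rightarrowtail W_B\twoheadrightarrow Y$ exhibits $A\in\B^-$.

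Part (a) will require a two-step construction. First, push out the given sequence $A\rightarrowtail B\twoheadrightarrow U$ along a chosen $\B^-$-resolution $A\rightarrowtail W_A\twoheadrightarrow S_A$ of $A$. The push-out diagram then has middle column $B\rightarrowtail E\twoheadrightarrow S_A$ and middle row $W_A\rightarrowtail E\twoheadrightarrow U$, and extension-closure of $\U$ applied to that row places $E$ in $\U$. Unfortunately $E$ need not lie in $\T$, so the middle column does not yet certify $B\in\B^-$. To repair this, use $\U\subseteq \B^-$ to pick a further sequence $E\rightarrowtail W_E\twoheadrightarrow S_E$ with $W_E\in \W$ and $S_E\in \s$, and apply Proposition \ref{7}(b) a second time to the pair $B\rightarrowtail E\twoheadrightarrow S_A$ and $E\rightarrowtail W_E\twoheadrightarrow S_E$. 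The resulting diagram exhibits the cokernel $X$ of $B\rightarrowtail W_E$ as an extension of $S_E$ by $S_A$, hence $X\in \s$, and $B\rightarrowtail W_E\twoheadrightarrow X$ is the desired witness that $B\in \B^-$.

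The only subtle point is the extra step in (a): a single push-out only places $E$ in $\U$, not in $\W$, so one diagram does not suffice. The remedy---further resolving $E$ via the inclusion $\U\subseteq \B^-$---is small but essential; once it is in place, part (a) mirrors the clean single-step argument of (b).
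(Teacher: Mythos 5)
Your proof of part (b) is essentially the paper's: both stack the given sequence on a $\W$-resolution of $B$ via Proposition \ref{7}(b) and use extension-closure of $\s$ to read $A\in\B^-$ off the middle row.

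Part (a), however, is a genuinely different and equally valid argument. The paper takes the resolution $B\rightarrowtail T^B\twoheadrightarrow S^B$ furnished by the cotorsion pair $(\s,\T)$ (so a priori only $T^B\in\T$) and then runs an $\Ext^1(-,\V)$ diagram chase: using $\Ext^1(U,\V)=0$ and $\Ext^1(W^A,\V)=0$ it deduces $\Ext^1(t^B,\V)=0$, then $\Ext^1(T^B,\V)=0$, hence $T^B\in\U\cap\T=\W$. You instead never look at $\Ext$ groups at all: you build a candidate $\W$-resolution of $B$ by hand, first pushing out $A\rightarrowtail B\twoheadrightarrow U$ along $A\rightarrowtail W_A$ to get $B\rightarrowtail E\twoheadrightarrow S_A$ with $E\in\U$ (extension-closure applied to the row $W_A\rightarrowtail E\twoheadrightarrow U$), then invoking $\U\subseteq\B^-$ to replace $E$ by a genuine $W_E\in\W$, splicing via Proposition \ref{7}(b). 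The paper's approach shows the ``canonical'' $(\s,\T)$-resolution of $B$ already works; yours constructs one from scratch. Yours is more elementary in the sense that it avoids the $\Ext$-functor calculation, at the cost of needing the inclusion $\U\subseteq\B^-$ (which the paper records right after Definition \ref{5}) and one extra splicing step. Both are correct.
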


\begin{proof}
(b) Since $B\in \B^-$, by definition, there exists a short exact sequence
$$\xymatrix{B \ar@{ >->}[r]^{w^B} &W^B \ar@{->>}[r] &S^B.}$$
Take a push-out of $g$ and $w^B$, by Proposition \ref{7}, we get a commutative diagram of short exact sequences
$$\xymatrix{
A \ar@{=}[d] \ar@{ >->}[r]^f &B \ar@{ >->}[d]^{w^B} \ar@{->>}[r]^g &S \ar@{ >->}[d]\\
A \ar@{ >->}[r]  &{W^B} \ar@{->>}[r] \ar@{->>}[d] &X \ar@{->>}[d]\\
&{S^B} \ar@{=}[r] &{S^B}.
}$$
We thus get $X\in \s$ since $\s$ is closed under extension. This gives $A\in \B^-$.\\
(a) Since $A\in \B^-$, it admits a short exact sequence
$$\xymatrix{
A \ar@{ >->}[r]^{w^A} &{W^A} \ar@{->>}[r] &{S^A}.}$$
where $W^A\in \W$ and $S^A\in \s$. Since $\Ext^1_\B(\s,\T)=0$, $w^A$ is a left $\T$-approximation of $A$. Thus there exists a commutative diagram of two short exact sequences
$$\xymatrix{
A \ar@{ >->}[r] \ar[d]_f &{W^A} \ar@{->>}[r] \ar[d] &{S^A} \ar[d]\\
B \ar@{ >->}[r]_{t^B} &{T^B} \ar@{->>}[r] &{S^B}.}
$$
It suffices to show $T^B\in \U$.\\
Apply $\Ext^1_\B(-,\V)$ to the following commutative diagram
$$\xymatrix{
A \ar@{ >->}[r]^{f} \ar[d]_{w^A} &B \ar@{->>}[r] \ar[d]^{t^B} &U\\
W^A \ar[r] &T^B
}
$$
since $\Ext^1_\B(\U,\V)=0$, we obtain the following commutative diagram
$$\xymatrix{
&{\Ext^1_\B(T^B,\V)} \ar[d]^{\Ext^1_\B(t^B,\V)} \ar@{.>}@/_/[dl] \ar[r] &\Ext^1_\B(W^A,\V)=0 \ar[d]\\
0=\Ext^1_\B(U,\V) \ar[r] &{\Ext^1_\B(B,\V)} \ar[r]_{\Ext^1_\B(f,\V)} &{\Ext^1_\B(A,\V)}.}
$$
It follows that $\Ext^1_\B(t^B,\V)=0$. Then from the following exact sequence
$$0=\Ext^1_\B(S^B,\V)\rightarrow \Ext^1_\B(T^B,\V)\xrightarrow{\Ext^1_\B(t^B,\V)=0} \Ext^1_\B(B,\V)$$
we get that $\Ext^1_\B(T^B,\V)=0$, which implies that $T^B\in \U$. Thus $T^B\in \W$ and $B\in B^-$.
\end{proof}

Dually, the following holds.
\begin{lem}\label{P2}
\begin{itemize}
\item[(a)] If $\xymatrix{T \ar@{ >->}[r] &A \ar@{->>}[r]^{f} &B}$ is a short exact sequence in $\B$ with $T\in \T$, then $B\in \B^+$ implies $A\in \B^+$.

\item[(b)] If $\xymatrix{V \ar@{ >->}[r] &A \ar@{->>}[r]^{f} &B}$ is a short exact sequence in $\B$ with $V\in \V$, then $A\in \B^+$ implies $B\in \B^+$.

\end{itemize}
\end{lem}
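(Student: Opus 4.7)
The statement is the exact formal dual of Lemma~\ref{P1}, so the plan is to deduce it from that lemma by passing to the opposite category. By the remark, the pair $(\V^{\op},\U^{\op}),(\T^{\op},\s^{\op})$ is a twin cotorsion pair on $\B^{\op}$; denote its data by $\s',\T',\U',\V'$, and set $\W'=\T'\cap\U'$. One immediately checks $\W'=\W^{\op}$.

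The key bookkeeping step is to verify that $B\in\B^+$ for the original pair if and only if $B\in(\B^{\op})^-$ for the dual pair. Indeed, a witnessing sequence $V_B\rightarrowtail W_B\twoheadrightarrow B$ in $\B$ with $W_B\in\W$ and $V_B\in\V$ reverses in $\B^{\op}$ to a short exact sequence $B\rightarrowtail W_B\twoheadrightarrow V_B$ with $W_B\in\W'$ and $V_B\in\s'$, which is precisely the defining condition for $(\B^{\op})^-$. Dually $\B^-$ corresponds to $(\B^{\op})^+$.

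With this dictionary in hand, both parts follow by direct application of Lemma~\ref{P1} to $\B^{\op}$. Part (a) translates via the identification $\T=(\U')^{\op}$: the sequence $T\rightarrowtail A\twoheadrightarrow B$ in $\B$ becomes $B\rightarrowtail A\twoheadrightarrow T$ in $\B^{\op}$ with $T\in\U'$, and the hypothesis $B\in\B^+$ becomes $B\in(\B^{\op})^-$, so Lemma~\ref{P1}(a) yields $A\in(\B^{\op})^-$, i.e.\ $A\in\B^+$. Part (b) follows analogously from Lemma~\ref{P1}(b) via $\V=(\s')^{\op}$. I expect no real obstacle beyond notation: the only subtlety is tracking the subcategory correspondences under the duality carefully. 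Should one prefer a direct argument, the proof would mimic Lemma~\ref{P1} verbatim with pull-backs of deflations replacing push-outs of inflations and $\Ext^1_\B(\s,-)$ replacing $\Ext^1_\B(-,\V)$, but this would add nothing conceptually.
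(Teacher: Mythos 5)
Your proposal is correct and is essentially the paper's own argument: the paper dispatches Lemma \ref{P2} with the single remark ``Dually, the following holds,'' which is precisely the duality reduction to Lemma \ref{P1} via the twin cotorsion pair $(\V^{\op},\U^{\op}),(\T^{\op},\s^{\op})$ on $\B^{\op}$ that you spell out. Your bookkeeping of the correspondences $\W'=\W^{\op}$, $\B^+\leftrightarrow(\B^{\op})^-$, $\T=(\U')^{\op}$, $\V=(\s')^{\op}$ is accurate, so the two parts follow exactly as you describe.
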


Now we give a proposition which is similar with \cite[Proposition 1.10]{A} and useful in our article.

\begin{prop}\label{sp}
Let $\T$ be a subcategory of $\B$ satisfying
\begin{itemize}
\item[(a)] $\mathcal P\subseteq\T$.

\item[(b)] $\T$ is contravariantly finite.

\item[(c)] $\T$ is closed under extension.
\end{itemize}
Then we get a cotorsion pair $(\T,\V)$ where
$$\V=\{X\in \B \text{ }| \text{ }\Ext^1_\B(\T,X)=0 \}.$$

\end{prop}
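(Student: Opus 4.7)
The plan is to verify both conditions of Definition \ref{2} for the pair $(\T,\V)$. The Ext-vanishing $\Ext^1_\B(\T,\V)=0$ is tautological from the definition of $\V$, and $\V$ is automatically additive, closed under direct summands, and closed under extensions (via the long exact sequence for $\Ext^1$ in the exact category $\B$). Moreover $\mathcal I\subseteq\V$, since injectivity forces $\Ext^1(T,I)=0$ for every $T\in\T$ and $I\in\mathcal I$. So the real work is to produce, for each $B\in\B$, the two short exact approximation sequences of Definition \ref{2}(b).

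For the first sequence $V_B\rightarrowtail T_B\twoheadrightarrow B$, I would combine the contravariant finiteness with the presence of projectives inside $\T$ to obtain a deflation right $\T$-approximation. Pick a right $\T$-approximation $t:T'\to B$ and a deflation $\pi:P\twoheadrightarrow B$ with $P\in\mathcal P\subseteq\T$; the map $t_B:=[t,\pi]:T'\oplus P\to B$ factors as $[t,1_B]\circ(1_{T'}\oplus\pi)$, a composition of deflations, and remains a right $\T$-approximation since its restriction to $T'$ already is one. Setting $T_B:=T'\oplus P\in\T$ and letting $V_B$ denote the kernel of $t_B$, it remains to show $V_B\in\V$. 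This is a Wakamatsu-type argument: given any $\eta:V_B\rightarrowtail E\twoheadrightarrow T$ with $T\in\T$, push out along $V_B\rightarrowtail T_B$ to obtain, via Proposition \ref{PO}, a $3\times 3$ diagram whose middle row is $T_B\rightarrowtail F\twoheadrightarrow T$ with $F\in\T$ by extension-closure, and whose middle column is $E\rightarrowtail F\twoheadrightarrow B$; factoring $F\twoheadrightarrow B$ through $t_B$ by the approximation property induces a morphism of short exact sequences whose restriction to the left kernels is to be exhibited as a retraction of $V_B\rightarrowtail E$, splitting $\eta$.

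For the second sequence $B\rightarrowtail V^B\twoheadrightarrow T^B$, I would reduce to the first part via a cosyzygy. Since $\B$ has enough injectives, fix a short exact sequence $B\rightarrowtail I\twoheadrightarrow C$ with $I\in\mathcal I$, apply the first construction to $C$ to obtain $V_C\rightarrowtail T_C\twoheadrightarrow C$ with $T_C\in\T$ and $V_C\in\V$, and form the pullback $V^B:=I\times_C T_C$. Both projections are deflations, so $V^B$ fits into the two short exact sequences
$$V_C\rightarrowtail V^B\twoheadrightarrow I\quad\text{and}\quad B\rightarrowtail V^B\twoheadrightarrow T_C,$$
the kernels being computed from the pullback squares. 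Since $V_C,I\in\V$ and $\V$ is closed under extensions, the first sequence forces $V^B\in\V$, so setting $T^B:=T_C$ completes the construction.

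The main obstacle is the Wakamatsu step. A naive lift $g:F\to T_B$ with $t_B g=(F\twoheadrightarrow B)$ only yields $g\iota_{T_B}=1_{T_B}+i\sigma$ for some $\sigma:T_B\to V_B$, rather than $g\iota_{T_B}=1_{T_B}$ on the nose, so the ambiguity in $g$ (adjustment by maps factoring through $V_B\rightarrowtail T_B$) has to be exploited carefully to turn the induced map on kernels into a genuine retraction of $V_B\rightarrowtail E$. This is the standard technical core of Wakamatsu's lemma in an exact setting, and is where essentially all the real work concentrates.
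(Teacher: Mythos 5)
Your reduction of the second approximation sequence to the first via a cosyzygy and a pullback is exactly the paper's argument, and it is correct. The gap is in the first step. You enlarge the approximation to $t_B := [t,-\pi]\colon T'\oplus P \to B$ to force it to be a deflation, but this sacrifices right minimality, and Wakamatsu's lemma genuinely needs minimality. You correctly diagnose the symptom — a lift $g\colon F\to T_B$ of $F\twoheadrightarrow B$ only yields $g\iota_{T_B}=1_{T_B}+i\sigma$ — but the proposed cure is circular: replacing $g$ by $g+ij$ for $j\colon F\to V_B$ changes $\sigma$ to $\sigma+j\iota_{T_B}$, so killing it means extending $-\sigma\colon T_B\to V_B$ along the inflation $T_B\rightarrowtail F$ whose cokernel is $T\in\T$, and the obstruction to such an extension lives precisely in $\Ext^1_\B(T,V_B)$, the group whose vanishing you are trying to establish. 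Moreover the conclusion is simply false for your $t_B$: after the change of basis $\bigl(\begin{smallmatrix}1 & -\alpha\\ 0 & 1\end{smallmatrix}\bigr)$ on $T'\oplus P$ (where $\pi=t\alpha$), your $t_B$ becomes $[t,0]$, so $\Ker t_B\cong(\Ker t)\oplus P$, and there is no reason for $P$ to satisfy $\Ext^1_\B(\T,P)=0$. Already for $\T=\B$ (which satisfies all three hypotheses) one should get $\V=\mathcal I$, yet your kernel has $P$ as a direct summand and projectives are rarely injective.

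The paper avoids this by taking $t_B\colon T_B\to B$ to be a \emph{minimal} right $\T$-approximation from the start: minimality is available because $\B$ is Krull--Schmidt, and $t_B$ is automatically a deflation because the projective deflation $P\twoheadrightarrow B$ ($P\in\mathcal P\subseteq\T$) factors through $t_B$, so the obscure axiom (weak idempotent completeness) forces $t_B$ to be a deflation. With minimality the ambiguity you isolate disappears: $t_B(1_{T_B}+i\sigma)=t_B$ forces $1_{T_B}+i\sigma$ to be an automorphism, and replacing $g$ by $(1_{T_B}+i\sigma)^{-1}g$ gives a genuine retraction of $T_B\rightarrowtail F$ compatible with the deflations to $B$, from which the retraction of $V_B\rightarrowtail E$ follows as you indicate. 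So the fix is not to add $P$; it is to use minimality, and to observe that $\mathcal P\subseteq\T$ already makes the minimal approximation a deflation.
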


\begin{proof}
For any object $B\in\B$, it admits a short exact sequence
$$\xymatrix{B \ar@{ >->}[r] &I \ar@{->>}[r]^f &X}$$
where $I\in \mathcal I$. By (a) and (b), we can take two short exact sequences
\begin{align*}
\xymatrix{V_X \ar@{ >->}[r] &T_X \ar@{->>}[r]^{t_X} &X,} \quad
\xymatrix{V_B \ar@{ >->}[r] &T_B \ar@{->>}[r]^{t_B} &B}
\end{align*}
where $t_X$ (resp. $t_B$) is a minimal right $\T$-approximation of $X$ (resp. $B$). Since $\T$ is closed under extension, by Wakamatsu's Lemma, we obtain $V_X\in \V$ (resp. $V_B\in \V$). Take a pull-back of $f$ and $t_X$, we get the following commutative diagram
$$\xymatrix{
&V_X \ar@{=}[r] \ar@{ >->}[d] &V_X \ar@{ >->}[d]\\
B \ar@{ >->}[r] \ar@{=}[d] &Y \ar@{->>}[r] \ar@{->>}[d] &T_X \ar@{->>}[d]^{t_X}\\
B \ar@{ >->}[r] &I \ar@{->>}[r]_f &X.
}
$$
Since $I,V\in \V$ and $\V$ is extension closed, we get $Y\in \V$. Thus $B$ admits two short exact sequence
\begin{align*}
V_B\rightarrowtail T_B\twoheadrightarrow B,\quad
B\rightarrowtail Y\twoheadrightarrow T_X
\end{align*}
satisfying $V_B,Y\in \V$ and $T_B,T_X\in \T$.
Hence by definition $(\T,\V)$ is a cotorsion pair.
\end{proof}

\section{\underline{$\mathcal {H}$}$\text{ }$is preabelian}

In this section, we fix a twin cotorsion pair $(\s,\T)$, $(\U,\V)$, we will show that the heart $\underline \h$ of a twin cotorsion pair is preabelian.

\begin{defn}\label{re}
For any $B\in\B$, define $B^+$ and $b^+:B\rightarrow B^+$ as follows:\\
Take two short exact sequences:
\begin{align*}
V_B\rightarrowtail U_B\twoheadrightarrow B,\quad
U_B\rightarrowtail T^U\twoheadrightarrow S^U
\end{align*}
where $U_B \in \U$, $\V_B\in \V$, $T^U\in \T$ and $S^U\in \s$. By Proposition \ref{7}, we get the following commutative diagram
\begin{equation}\label{F1}
$$\quad \quad \quad \quad \quad \quad \quad \quad \quad \quad \quad \quad \quad \quad \quad \quad \quad \quad\xymatrix{
V_B \ar@{=}[d] \ar@{ >->}[r] &U_B \ar@{ >->}[d]^u \ar@{->>}[r] &B \ar@{ >->}[d]^{b^+}\\
V_B \ar@{ >->}[r]  &T^U \ar@{->>}[r]^t \ar@{->>}[d] &B^+ \ar@{->>}[d]\\
&S^U \ar@{=}[r] &S^U
}$$
\end{equation}
where the upper-right square is both a push-out and a pull-back.
\end{defn}

We can easily get the following Lemma.

\begin{lem}\label{3.1}
By Definition \ref{re}, $B^+\in \B^+$. Moreover, if $B\in \B^-$, then $B^+\in \h$.
\end{lem}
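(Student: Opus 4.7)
The plan is to read off the two required facts directly from the diagram \eqref{F1} together with the closure properties of $\U$ and the transfer lemma \ref{P1}.

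First I would argue that $B^+\in\B^+$. Looking at the middle row of \eqref{F1}, I have the short exact sequence
$$V_B\rightarrowtail T^U\twoheadrightarrow B^+$$
with $V_B\in\V$ by construction. To match Definition \ref{5}(a) it remains to show that $T^U$ lies in $\W=\T\cap\U$. By construction $T^U\in\T$, so I only need $T^U\in\U$. For this I invoke the middle column $U_B\rightarrowtail T^U\twoheadrightarrow S^U$: here $U_B\in\U$, and $S^U\in\s\subseteq\U$, so since $\U$ is closed under extensions by Lemma \ref{3}(c), we conclude $T^U\in\U$. Thus $T^U\in\W$ and the middle row exhibits $B^+$ as an object of $\B^+$.

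Next, assume $B\in\B^-$. To see $B^+\in\B^-$ I use the right column of \eqref{F1}, namely the short exact sequence
$$B\rightarrowtail B^+\twoheadrightarrow S^U$$
where $S^U\in\s\subseteq\U$. This is exactly the input to Lemma \ref{P1}(a), which transfers membership in $\B^-$ from the subobject to the middle term once the cokernel lies in $\U$. Hence $B^+\in\B^-$, and combined with the previous paragraph we obtain $B^+\in\B^+\cap\B^-=\h$.

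There is no real obstacle here: the diagram was constructed precisely so that the middle row provides the $\B^+$-resolution and the right column provides the $\B^-$-transfer. The only points one has to be careful about are the small inclusions $\s\subseteq\U$ (from the twin cotorsion pair condition together with Lemma \ref{3}(d)) and the extension-closure of $\U$, both of which are already in hand.
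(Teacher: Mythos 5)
Your proof is correct and follows essentially the same route as the paper's: conclude $T^U\in\W$ from the middle column using extension-closure of $\U$ and $\s\subseteq\U$, read off $B^+\in\B^+$ from the middle row, and transfer $\B^-$-membership along the right column via Lemma \ref{P1}(a). (One tiny slip: $\s\subseteq\U$ is the defining condition of a twin cotorsion pair, Definition \ref{ctp}, not a consequence of Lemma \ref{3}(d), but that does not affect the argument.)
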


\begin{proof}
Since $\U$ is closed under extension, we get $T^U\in\U\cap \T=\W$. Hence by definition $B^+\in \B^+$. If $B\in \B^-$, by Lemma \ref{P1}, $B^+$ also lies in $\B^-$. Thus $B^+\in \h$.
\end{proof}

We give an important property of $b^+$ in the following proposition.

\begin{prop}\label{8}
For any $B\in \B$ and $Y\in \B^+$, $\Hom_\B(b^+,Y):\Hom_\B(B^+,Y)\rightarrow \Hom_\B(B,Y)$ is surjective and $\Hom_{\uB}(\underline {b^+},Y):\Hom_{\uB}(B^+,Y)\rightarrow \Hom_{\uB}(B,Y)$ is bijective.
\end{prop}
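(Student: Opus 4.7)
The plan is to handle the surjectivity of $\Hom_\B(b^+, Y)$ first, then derive the bijectivity of $\Hom_{\uB}(\underline{b^+}, Y)$ by combining that surjectivity with a factoring argument through $\W$.

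For surjectivity, given $f \colon B \to Y$, I would work with the defining diagram (\ref{F1}) of $b^+$ together with a short exact sequence $V_Y \rightarrowtail U_Y \twoheadrightarrow Y$ coming from $Y \in \B^+$, where $U_Y \in \W$ and $V_Y \in \V$. First, lift the composite $f \circ p \colon U_B \to Y$ along the deflation $U_Y \twoheadrightarrow Y$ to a morphism $h \colon U_B \to U_Y$; this is possible because $\Ext^1_\B(U_B, V_Y) = 0$ by the cotorsion pair $(\U,\V)$. Second, extend $h$ along the inflation $u \colon U_B \rightarrowtail T^U$ to $\tilde h \colon T^U \to U_Y$, using $\Ext^1_\B(S^U, U_Y) = 0$, which holds because $S^U \in \s$, $U_Y \in \T$, and $\Ext^1_\B(\s,\T) = 0$. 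The composite $U_Y \twoheadrightarrow Y$ with $\tilde h$ then vanishes on $V_B$ (since $V_B \to U_B \to B$ is zero), so it factors through $t \colon T^U \twoheadrightarrow B^+$, yielding the desired $\tilde f \colon B^+ \to Y$; the equality $\tilde f \circ b^+ = f$ follows by restricting to $U_B$ and cancelling the deflation $p$.

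For the bijectivity in $\uB$, surjectivity is inherited from the first part. For injectivity, suppose $g \colon B^+ \to Y$ satisfies $g b^+ = \beta \alpha$ with $\alpha \colon B \to W$, $\beta \colon W \to Y$ and $W \in \W$. Since $\Ext^1_\B(S^U, W) = 0$, the morphism $\alpha$ lifts along $b^+$ to some $\tilde \alpha \colon B^+ \to W$; then $(g - \beta \tilde \alpha) b^+ = 0$, so $g - \beta \tilde \alpha = \gamma \pi$ for some $\gamma \colon S^U \to Y$, where $\pi \colon B^+ \twoheadrightarrow S^U$ is the deflation appearing in (\ref{F1}). Finally, lift $\gamma$ along $U_Y \twoheadrightarrow Y$ using $\Ext^1_\B(S^U, V_Y) = 0$ (from $\s \subseteq \U$ and $V_Y \in \V$) to conclude that $\gamma \pi$ also factors through $U_Y \in \W$. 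Hence $g$ is a sum of two morphisms each factoring through $\W$, so $\underline g = 0$.

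The argument is structurally routine once the two vanishing principles $\Ext^1_\B(\U,\V) = 0$ and $\Ext^1_\B(\s,\T) = 0$ are identified; the only mildly delicate point will be the book-keeping across the successive lifts and checking that the factorisations are compatible. The conceptual core is that the construction of $b^+$ in (\ref{F1}) produces precisely the right cokernel $S^U \in \s$ to permit lifts into objects of $\B^+$, while $U_B \in \U$ provides the starting approximation into $U_Y$.
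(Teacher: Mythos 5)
Your proposal is correct. For surjectivity, your argument coincides with the paper's in substance: both lift $f p$ through the $\W$-deflation $W_Y\twoheadrightarrow Y$ (using $\Ext^1_\B(U_B,V_Y)=0$) and extend over $u\colon U_B\rightarrowtail T^U$ (using $\Ext^1_\B(S^U,W_Y)=0$); you then build $\tilde f$ by hand and verify $\tilde f b^+=f$ by cancelling the deflation $p$, whereas the paper packages those two steps into a comparison of the $\Ext^1$-long exact sequences of the two short exact sequences in diagram \eqref{F1}. For injectivity, however, you take a genuinely different route. The paper first replaces the factorisation through $\W$ by a factorisation $q b^+=w_Y a$ through $W_Y$ (right $\U$-approximation), pushes out $b^+$ against $a$ to obtain $Q$ with $W_Y\rightarrowtail Q\twoheadrightarrow S^U$, observes $Q\in\U$ by extension-closure, and then uses the right $\U$-approximation property once more to make $q$ factor through $W_Y$. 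You instead lift $\alpha\colon B\to W$ across $b^+$ using $\Ext^1_\B(S^U,W)=0$, subtract, and note the difference factors through the cokernel $\pi\colon B^+\twoheadrightarrow S^U$, then lift the resulting $S^U\to Y$ through $W_Y$ using $\Ext^1_\B(S^U,V_Y)=0$. The net effect is a decomposition of $g$ as a sum of two morphisms each factoring through $\W$, avoiding the auxiliary pushout object $Q$ and the second invocation of the approximation property; the paper's version is arguably more structural (it exhibits a single $\W$-factorisation), while yours is more elementary, trading the pushout for one additional lift.
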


\begin{proof}
Let $y\in \Hom_\B(B,Y)$ be any morphism. By definition, there exists a short exact sequence
$$\xymatrix{V_Y \ar@{ >->}[r] &W_Y \ar@{->>}[r]^{w_Y} &Y.}$$
Since $\Ext^1_\B(U_B,V_Y)=0$, $w_Y$ is a right $\U$-approximation of $Y$. Thus any $f \in \Hom_\B(U_B,Y)$ factors through $W_Y$.
$$\xymatrix{
&&U_B \ar[d]^f \ar@{.>}[dl]_g\\
V_Y \ar@{ >->}[r] &W_Y \ar@{->>}[r]_{w_Y} &Y
}
$$
As $\Ext^1_\B(\s,\T)=0$, $u$ is a left $\T$-approximation of $U_B$, we get the following commutative diagram:
$$\xymatrix{
U_B \ar[d]_g \ar@{ >->}[r]^u &T^U \ar@{.>}[dl] \ar@{->>}[r] &S^U\\
W_Y \ar[d]_{w_Y}\\
Y
}$$
which implies that $\Hom_\B(u,Y):\Hom_\B(T^U,Y)\rightarrow \Hom_\B(U_B,Y)$ is epimorphic. Hence when we apply $\Hom_\B(-,Y)$ to the diagram \eqref{F1}, we obtain the following exact sequence
$$\xymatrix{
\Hom_\B(B^+,Y) \ar[rr]^{\Hom_\B(b^+,Y)} &&{\Hom_\B(B,Y)} \ar[r] \ar@{.>}[d] &{\Ext^1_\B(S,Y)} \ar[r] \ar@{=}[d] &{\Ext^1_\B(B^+,Y)} \ar[d]\\
\Hom_\B(T^U,Y) \ar[rr]^{\Hom_\B(u,Y)} &&{\Hom_\B(U_B,Y)} \ar[r]^{0} &{\Ext^1_\B(S^U,Y)} \ar[r] &{\Ext^1_\B(T^U,Y)}
}$$
which implies that $\Hom_\B(b^+,Y)$ is an epimorphism. In particular, $\Hom_{\uB}(\underline {b^+},Y)$ is an epimorphism.\\
It remains to show that $\Hom_{\uB}(\underline {b^+},Y)$ is monomorphic. Suppose $q\in \Hom_\B(B^+,Y)$ satisfies $\underline {qb^+}=0$, it follows that $qb^+$ factors through $\W$. Since $w_Y$ is a right $\U$-approximation, there exists a morphism $a:B\rightarrow W_Y$ such that $w_Ya=qb^+$. Take a push-out of $b^+$ and $a$, we get the following commutative diagram of short exact sequences
$$\xymatrix{
B \ar@{ >->}[r]^{b^+} \ar[d]_{a} \ar@{}[dr]|{PO} &{B^+} \ar[d]^{c'} \ar@{->>}[r] &S^U \ar@{=}[d]\\
W_Y \ar@{ >->}[r]_c &Q \ar@{->>}[r] &S^U.}
$$
There exists a morphism $d:Q\rightarrow Y$ such that $dc=w_Y$ and $dc'=q$ by the definition of push-out. But $Q\in \U$ by Lemma \ref{3}, and $w_Y$ is a right $\U$-approximation, we have that $d$ factors through $W_Y$. Thus $q=dc'$ also factors through $W_Y$, and $\underline q=0$.
\end{proof}

We give an equivalent condition for a special case when $B^+=0$ in $\uB$.

\begin{lem}\label{eq}
For any $B\in \B$, the following are equivalent.
\begin{itemize}
\item[(a)] $B^+\in \W.$

\item[(b)] $B\in \U.$

\item[(c)] $\underline {b^+}=0$ in $\uB$.
\end{itemize}
\end{lem}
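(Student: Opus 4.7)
The plan is to establish the cycle (a)$\Rightarrow$(c)$\Rightarrow$(a)$\Rightarrow$(b)$\Rightarrow$(a), leaning throughout on the diagram \eqref{F1} and on the fact that $\W=\T\cap\U$ is closed under direct summands (since both $\T$ and $\U$ are, by Definition \ref{2}).

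The equivalence (a)$\Leftrightarrow$(c) is essentially a translation. If $B^+\in\W$, then $b^+$ factors through $B^+$ itself, so $\underline{b^+}=0$. Conversely, assuming $\underline{b^+}=0$, I apply Proposition \ref{8} with $Y:=B^+$, which lies in $\B^+$ by Lemma \ref{3.1}. Under the bijection $\Hom_{\uB}(\underline{b^+},B^+)\colon\Hom_{\uB}(B^+,B^+)\rightarrow\Hom_{\uB}(B,B^+)$, the class $\underline{1_{B^+}}$ is sent to $\underline{b^+}=0$, so injectivity forces $\underline{1_{B^+}}=0$ in $\uB$. Hence $1_{B^+}$ factors through some $W\in\W$ inside $\B$, exhibiting $B^+$ as a direct summand of $W$, and by closure under summands one obtains $B^+\in\W$.

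For (a)$\Rightarrow$(b), I use that $B^+\in\W\subseteq\U$ forces $\Ext^1_\B(B^+,V_B)=0$, splitting the middle row $V_B\rightarrowtail T^U\twoheadrightarrow B^+$ of \eqref{F1}. Thus $V_B$ is a direct summand of $T^U\in\W$, so $V_B\in\W\subseteq\U$. A second splitting of the top row $V_B\rightarrowtail U_B\twoheadrightarrow B$ (via $\Ext^1_\B(U_B,V_B)=0$) then exhibits $B$ as a direct summand of $U_B\in\U$, giving $B\in\U$. Conversely, for (b)$\Rightarrow$(a), the right column $B\rightarrowtail B^+\twoheadrightarrow S^U$ of \eqref{F1} together with $\Hom_\B(-,V)$ for any $V\in\V$ yields the exact piece
\[\Ext^1_\B(S^U,V)\longrightarrow \Ext^1_\B(B^+,V)\longrightarrow \Ext^1_\B(B,V).\]
The outer terms vanish ($S^U\in\s\subseteq\U$ and $B\in\U$), so $\Ext^1_\B(B^+,V)=0$, i.e., $B^+\in\U$. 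Splitting $V_B\rightarrowtail T^U\twoheadrightarrow B^+$ exactly as above then places $B^+$ as a direct summand of $T^U\in\W$, so $B^+\in\W$.

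I do not foresee any real obstacle here; the whole argument is a bookkeeping exercise in matching the Ext-vanishings supplied by the cotorsion-pair axioms to the appropriate rows and columns of \eqref{F1} and repeatedly invoking closure of $\W$ and $\U$ under direct summands. The one point to keep straight is the direction of precomposition in Proposition \ref{8}, which is what makes the passage (c)$\Rightarrow$(a) collapse to the observation that $\underline{1_{B^+}}$ must vanish.
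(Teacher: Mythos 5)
Your (a)$\Rightarrow$(b) step has a gap. After showing $V_B\in\W$, you claim the top row $V_B\rightarrowtail U_B\twoheadrightarrow B$ splits ``via $\Ext^1_\B(U_B,V_B)=0$.'' But that short exact sequence is classified by $\Ext^1_\B(B,V_B)$, not $\Ext^1_\B(U_B,V_B)$; the group you cite is always zero by the cotorsion axiom and says nothing about this extension, while the vanishing of $\Ext^1_\B(B,V_B)$ is precisely what $B\in\U$ would give you -- the very thing you are trying to prove. So the argument as stated is circular. The conclusion is still reachable from what you have: pick a retraction $r\colon T^U\to V_B$ of the middle-row inflation $j\colon V_B\rightarrowtail T^U$. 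By commutativity of the left-upper square of \eqref{F1}, $uv=j$ where $v\colon V_B\rightarrowtail U_B$ is the top-row inflation, so $(ru)v=rj=1_{V_B}$; hence $v$ admits a retraction, the top row splits, and $B$ is a direct summand of $U_B\in\U$, giving $B\in\U$ by closure of $\U$ under summands. Alternatively, the paper avoids the issue entirely by proving (c)$\Rightarrow$(b) directly: $b^+$ factors through $\W\subseteq\U$, $t$ is a right $\U$-approximation of $B^+$, so $b^+$ factors through $t$, and the pull-back property of the upper-right square of \eqref{F1} then splits the top row.

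Your (c)$\Rightarrow$(a) step is correct and genuinely different from the paper: you use the injectivity of $\Hom_{\uB}(\underline{b^+},B^+)$ from Proposition \ref{8} to deduce $\underline{1_{B^+}}=0$, hence $B^+$ is a summand of some $W\in\W$; the paper instead proves (c)$\Rightarrow$(b) and chains through (b)$\Rightarrow$(a). Both routes rest on the same underlying approximation property of $t$, so neither is clearly simpler, but it is a clean alternative. Your (b)$\Rightarrow$(a) agrees with the paper.
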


\begin{proof}
Consider the diagram \eqref{F1} in Definition \ref{re}. We first prove that (a) implies (b).\\
Suppose (b) holds. Since $B\in \U$, we get $B^+\in \U$. Thus $\Ext^1_\B(B^+,V_B)=0$, and then $t$ splits. Hence $B^+$ is a direct summand of $T^U\in \W$, which implies that $B^+\in \W$.\\
Obviously (a) implies (c), now it suffices to show that (c) implies (b).\\
Since $b^+$ factors through $\W$, and $t$ is a right $\U$-approximation of $B^+$, we get that $b^+$ factors through $t$. Hence by the definition of pull-back, the first row of diagram \eqref{F1} splits, which implies that $B\in \U$.
\end{proof}

Now we give a dual construction.

\begin{defn}\label{10}
For any object $B\in \B$, we define $b^-:B^- \rightarrow B$ as follows
Take the following two short exact sequences
\begin{align*}
B\rightarrowtail T^B \twoheadrightarrow S^B,\quad
V_T\rightarrowtail U_T \twoheadrightarrow T^B
\end{align*}
where $U_T\in \U$, $V_T\in \V$, $T^B\in \T$ and $S^B\in \s$. By Proposition \ref{7}, we get the following commutative diagram:
$$\xymatrix{
V_T \ar@{ >->}[d] \ar@{=}[r] &{V_T} \ar@{ >->}[d]\\
B^- \ar@{->>}[d]_{b^-} \ar@{ >->}[r] &{U_T} \ar@{->>}[d] \ar@{->>}[r] &{S^B} \ar@{=}[d]\\
B \ar@{ >->}[r] &{T^B} \ar@{->>}[r] &{S^B}.}
$$
\end{defn}

By duality, we get:

\begin{prop}
For any $B\in \B$, $B^-\in \B^-$ and $B\in \B^+$ implies $B^- \in \h$. For any $X\in \B^-$, $\Hom_\B(X,b^-):\Hom_\B(X,B^-)\rightarrow \Hom_\B(X,B)$ is surjective and $\Hom_{\uB}( X,\underline {b^-}):\Hom_{\uB}(X,B^-)\rightarrow \Hom_{\uB}(X,B)$ is bijective.
\end{prop}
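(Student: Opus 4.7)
The statement is dual to Lemma \ref{3.1} combined with Proposition \ref{8}, so the plan is to dualise those arguments directly using the construction in Definition \ref{10}; inflations and deflations, push-outs and pull-backs, and the roles of $(\s,\T)$ and $(\U,\V)$ are exchanged throughout.

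For $B^-\in\B^-$ and the implication $B\in\B^+\Rightarrow B^-\in\h$, I would first examine the middle column $V_T\rightarrowtail U_T\twoheadrightarrow T^B$ of the diagram in Definition \ref{10}: since $V_T\in\V\subseteq\T$ and $T^B\in\T$, closure of $\T$ under extension gives $U_T\in\T$, and combined with $U_T\in\U$ this places $U_T\in\W$. Hence the middle row $B^-\rightarrowtail U_T\twoheadrightarrow S^B$ exhibits $B^-\in\B^-$. When $B\in\B^+$, Lemma \ref{P2}(a) applied to the left column $V_T\rightarrowtail B^-\twoheadrightarrow B$ (with $V_T\in\V\subseteq\T$) yields $B^-\in\B^+$, and therefore $B^-\in\h$.

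Next I would fix $X\in\B^-$ with its defining short exact sequence $X\rightarrowtail W^X\twoheadrightarrow S^X$. Two approximation facts, dual to those used in Proposition \ref{8}, power the argument: $w^X\colon X\rightarrowtail W^X$ is a left $\T$-approximation (because $\Ext^1_\B(\s,\T)=0$), and the deflation $\pi\colon U_T\twoheadrightarrow T^B$ from Definition \ref{10} is a right $\U$-approximation (because $\Ext^1_\B(\U,\V)=0$). For surjectivity of $\Hom_\B(X,b^-)$, given $x\colon X\to B$ I would compose with the inflation $\iota\colon B\rightarrowtail T^B$, factor $\iota x=h'w^X$ through $w^X$, then factor $h'=\pi h''$ through $\pi$ (legitimate because $W^X\in\U$). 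The pair $(x,h''w^X)$ satisfies $\iota x=\pi(h''w^X)$, so the pull-back universal property of the lower-left square of Definition \ref{10} delivers the desired lift $\hat x\colon X\to B^-$ with $b^-\hat x=x$.

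For bijectivity of $\Hom_{\uB}(X,\underline{b^-})$, suppose $x\colon X\to B^-$ has $\underline{b^-x}=0$ and write $b^-x=k_2k_1$ factoring through some $W\in\W$. Dually to the final step of Proposition \ref{8}, I would pull $b^-$ back along $k_2$, obtaining a morphism $c\colon R\to B^-$ and a short exact sequence $V_T\rightarrowtail R\twoheadrightarrow W$; since $V_T,W\in\T$, closure under extension gives $R\in\T$. The pull-back universal property applied to the pair $(x,k_1)$ produces $\hat d\colon X\to R$ with $c\hat d=x$, and as $R\in\T$ the left $\T$-approximation $w^X$ factorises $\hat d=\hat e w^X$, whence $x=c\hat e w^X$ factors through $W^X\in\W$ and $\underline x=0$. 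The only real obstacle is bookkeeping: correctly matching the two $\Ext^1$-vanishings above to their counterparts in Proposition \ref{8}; once the dualisation dictionary is in place each step mirrors a step there.
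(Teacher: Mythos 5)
Your proposal is correct. The paper itself offers no written proof of this statement — it simply records it with the remark "By duality, we get:", relying on the observation that a twin cotorsion pair $(\s,\T),(\U,\V)$ on $\B$ gives a twin cotorsion pair $(\V^{\op},\U^{\op}),(\T^{\op},\s^{\op})$ on $\B^{\op}$. You instead spell the dual argument out directly, which is legitimate and lines up with the diagram of Definition \ref{10} (whose lower-left square is a pullback by Proposition \ref{7}(a)). Your verification that $U_T\in\W$ (via $\V\subseteq\T$ and closure of $\T$ under extension) and the appeal to Lemma \ref{P2}(a) for the $\B^+$ part are exactly the duals of the steps in Lemma \ref{3.1}. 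For the surjectivity of $\Hom_\B(X,b^-)$ you take a slightly lighter route than a literal dual of Proposition \ref{8}: rather than comparing the two $\Hom$--$\Ext$ long exact sequences coming from applying $\Hom_\B(X,-)$ to the middle and left columns, you produce the lift explicitly by factoring $\iota x$ through the left $\T$-approximation $w^X$, then through the right $\U$-approximation $\pi$, and invoking the pullback universal property. For the kernel-vanishing part you also streamline the dual of the paper's argument by pulling back along $k_2$ directly instead of first rewriting $b^-x$ as $aw^X$ and pulling back along $a$; since $R\in\T$ one still factors $\hat d$ through $w^X$ and concludes $\underline x=0$. Both simplifications are correct and, if anything, a bit more economical than a verbatim dualization of Proposition \ref{8}.
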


\begin{defn}\label{MF}
For any morphism $f:A\rightarrow B$ with $A \in \B^-$, define $C_f$ and $c_f:B\rightarrow C_f$ as follows:\\
By definition, there exists a short exact sequence
$$\xymatrix{A \ar@{ >->}[r]^{w^A} &W^A \ar@{->>}[r] &S^A.}$$
Take a push-out of $f$ and $w^A$, we get the following commutative diagram of short exact sequences
\begin{equation}\label{F2}
$$\quad \quad \quad \quad \quad \quad \quad \quad \quad \quad \quad \quad \quad \quad \quad \quad \quad \quad \xymatrix{
A \ar@{ >->}[r]^{w^A} \ar[d]_{f} \ar@{}[dr]|{PO} &{W^A} \ar@{->>}[r] \ar[d] &{S^A} \ar@{=}[d]\\
B \ar@{ >->}[r]_{c_f} &{C_f} \ar@{->>}[r]_s &{S^A}.}
$$
\end{equation}
\end{defn}

By Lemma \ref{P1}, $B\in \B^-$ implies $C_f\in \B^-$.

Dually, we have the following:

\begin{defn}\label{13}
For any morphism $f:A\rightarrow B$ in $\B$ with $B\in \B^+$, define $K_f$ and $k_f:K_f\rightarrow A$ as follows:\\
By definition, there exists a short exact sequence
$$\xymatrix{V_B \ar@{ >->}[r] &{W_B} \ar@{->>}[r]^{w_B} &B.}$$
Take a pull-back of $f$ and $w_B$, we get the following commutative diagram of short exact sequences
\begin{equation}\label{F3}
$$\quad \quad \quad \quad \quad \quad \quad \quad \quad \quad \quad \quad \quad \quad \quad \quad \quad \quad \xymatrix{
V_B \ar@{ >->}[r] \ar@{=}[d] &{K_f} \ar@{}[dr]|{PB} \ar@{->>}[r]^{k_f} \ar[d] &A \ar[d]^f\\
V_B \ar@{ >->}[r] &{W_B} \ar@{->>}[r]_{w_B} &B}
$$
\end{equation}
\end{defn}

By Lemma \ref{P2}, $A\in B^+$ implies $K_f\in B^+$.

The following lemma gives an important property of $c_f$:

\begin{lem}\label{14}
Let $f:A\rightarrow B$ be any morphism in $\B$ with $A\in \B^-$, take the notation of Definition \ref{MF}, then $c_f:B\rightarrow C_f$ satisfies the following properties:\\
For any $C\in \B$ and any morphism $g\in \Hom_\B(B,C)$ satisfying $\underline {gf}=0$, there exists a morphism $c:C_f\rightarrow C$ such that $cc_f=g$.
$$\xymatrix{
A \ar[r]^f &B \ar[rr]^-g \ar[dr]_{c_f} &&C\\
&&C_f \ar@{.>}[ur]_c}
$$
Moreover if $C\in \B^+$, then $\underline c$ is unique in $\uB$. The dual statement also holds for $k_f$ in Definition \ref{13}.
\end{lem}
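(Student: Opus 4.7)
The plan is to separate the statement into the existence of $c$ and the uniqueness of $\underline{c}$. Both will be driven by two features of the construction of $c_f$ in Definition~\ref{MF}: the universal property of the push-out square~\eqref{F2}, and the fact that the cokernel of $c_f$ equals $s:C_f\twoheadrightarrow S^A$ with $S^A\in\s$. The only vanishing I will repeatedly invoke is $\Ext^1_\B(\s,\T)=0$, which is built into the twin cotorsion pair.

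For existence, I unwind $\underline{gf}=0$ to a factorization $gf=\beta\alpha$ with $\alpha:A\to W_0$, $\beta:W_0\to C$, and $W_0\in\W$. Since $W_0\in\W\subseteq\T$, the long exact sequence obtained from $A\stackrel{w^A}{\rightarrowtail}W^A\twoheadrightarrow S^A$ by applying $\Hom_\B(-,W_0)$, together with $\Ext^1_\B(S^A,W_0)=0$, shows that $\alpha=\gamma w^A$ for some $\gamma:W^A\to W_0$. Hence $gf=(\beta\gamma)w^A$, and the push-out universal property of~\eqref{F2} delivers $c:C_f\to C$ with $cc_f=g$ (it simultaneously makes $c$ extend $\beta\gamma$ along the induced map $W^A\to C_f$, which is irrelevant for the statement).

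For uniqueness of $\underline{c}$ when $C\in\B^+$, suppose $c,c':C_f\to C$ both satisfy $cc_f=g=c'c_f$. Then $(c-c')c_f=0$, and by the cokernel property of $s$ there is $\epsilon:S^A\to C$ with $c-c'=\epsilon s$. Now use the $\B^+$-presentation $V_C\rightarrowtail W_C\twoheadrightarrow C$ with $W_C\in\W$ and $V_C\in\V\subseteq\T$: since $\Ext^1_\B(S^A,V_C)=0$, the morphism $\epsilon$ lifts as $\epsilon=w_C\eta$ for some $\eta:S^A\to W_C$. Thus $c-c'=w_C\eta s$ factors through $W_C\in\W$, so $\underline{c}=\underline{c'}$ in $\uB$.

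The step requiring the most care is mere bookkeeping: both halves of the argument ultimately rest on the single vanishing $\Ext^1_\B(\s,\T)=0$, applied once with $W_0$ on the right (for existence) and once with $V_C$ on the right (for uniqueness). The dual statement for $k_f$ then follows automatically by reinterpreting the argument on $\B^{\op}$, where $(\V^{\op},\U^{\op}),(\T^{\op},\s^{\op})$ is again a twin cotorsion pair.
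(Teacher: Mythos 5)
Your argument matches the paper's proof step for step: factor $gf$ through $\W$, use $\Ext^1_\B(\s,\T)=0$ to lift through $w^A$ and invoke the push-out universal property for existence, then use the cokernel $s$ of $c_f$ and the right $\W$-approximation $w_C$ of $C\in\B^+$ for uniqueness. The only superficial difference is that you justify the lift of $\epsilon$ via $\Ext^1_\B(\s,\T)=0$ with $V_C\in\V\subseteq\T$, while the paper uses $\Ext^1_\B(\U,\V)=0$ with $S^A\in\s\subseteq\U$; these are the same vanishing in a twin cotorsion pair.
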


\begin{proof}
Since $\underline {gf}=0$, $gf$ factors through $\W$. As $\Ext^1_\B(S_A,W^A)=0$, $w^A$ is a left $\W$-approximation of $A$. Hence  there exists $b:W^A\rightarrow C$ such that $gf=bw^A$. Then by the definition of push-out, we get the following commutative diagram
$$\xymatrix{
A \ar[d]_f \ar[r]^{w^A} &W^A \ar[d] \ar@/^/[ddr]^b\\
B \ar[r]^{c_f} \ar@/_/[drr]_g &{C_f} \ar@{.>}[dr]^c\\
&&C.}
$$
Now assume that $C\in \B^+$ and there exists $c':C_f\rightarrow C$ such that $c'c_f=g$. Since $(c'-c)c_f=0$, there exists a morphism $d:S^A\rightarrow C$ such that $c'-c=ds$. As $C$ admits a short exact sequence
$$\xymatrix{V_C \ar@{ >->}[r] &{W_C} \ar@{->>}[r]^{w_C} &C}$$
and $w_C$ is a right $\U$-approximation of $C$, we obtain that there exists a morphism $e:S^A\rightarrow W_C$ such that $w_Ce=d$. Hence $c'-c$ factors through $W_C$, and $\underline c=\underline {c'}$.
\end{proof}

\begin{thm}\label{15}
For any twin cotorsion pair $(\s,\T),(\U,\V)$, its heart $\underline \h$ is preabelian.
\end{thm}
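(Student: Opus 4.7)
The plan is to construct kernels and cokernels in $\underline{\h}$ by composing the two relative constructions already at our disposal: the ``partial cokernel'' $c_f:B\to C_f$ from Definition \ref{MF} (which guarantees $C_f\in\B^-$ when $A\in\B^-$) followed by the ``$+$-reflection'' $b^+:C_f\to C_f^+$ from Definition \ref{re} (which lands in $\h$ by Lemma \ref{3.1}, precisely because $C_f\in\B^-$). Dually, the kernel of $\underline{f}$ will be obtained by first taking the partial kernel $k_f:K_f\to A$ from Definition \ref{13} and then applying the $-$-reflection. Additivity of $\underline{\h}$ is essentially automatic: $\B^+$ and $\B^-$ are closed under finite direct sums (just take direct sums of the defining conjugate sequences), hence so is $\h$, and the quotient $\h/\W$ inherits an additive structure.

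For the cokernel, given a morphism $\underline{f}:A\to B$ in $\underline{\h}$, I would propose
$$\underline{\mathrm{coker}}\,\underline{f}\;=\;\underline{b^+_{C_f}\circ c_f}\,:\,B\longrightarrow C_f^+.$$
To verify the universal property, suppose $\underline{g}:B\to C$ in $\underline{\h}$ (so $C\in\h\subseteq\B^+$) satisfies $\underline{g}\,\underline{f}=0$. Lemma \ref{14} then yields $c:C_f\to C$ with $cc_f=g$ in $\B$, and Proposition \ref{8} (applied to the object $C\in\B^+$) produces a morphism $h:C_f^+\to C$ with $\underline{h\circ b^+_{C_f}}=\underline{c}$ in $\uB$; hence $\underline{h}\circ\underline{b^+_{C_f}\circ c_f}=\underline{g}$ in $\underline{\h}$. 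Uniqueness of $\underline{h}$ follows by chaining the two uniqueness assertions: the ``moreover'' part of Lemma \ref{14} gives uniqueness of $\underline{c}$ once $C\in\B^+$, and the bijectivity part of Proposition \ref{8} then gives uniqueness of $\underline{h}$. The kernel is constructed by the dual procedure, using Definition \ref{13} together with the dual of Proposition \ref{8} and the dual of Lemma \ref{14}.

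The main technical point to handle carefully is the interface between the two reflection steps. Lemma \ref{14} produces a morphism $c$ in $\B$ (not merely in $\uB$), which is exactly what is needed to feed into Proposition \ref{8}; and conversely, Proposition \ref{8} only asserts bijectivity in $\uB$, so one must check that the relation $\underline{h}\circ\underline{b^+_{C_f}\circ c_f}=\underline{g}$ holds already in $\underline{\h}$ and not only up to factoring through $\W$. Both points are settled by the fact that $\W\subseteq\h$ and that the quotient functor $\h\to\underline{\h}$ factors through $\h\hookrightarrow\B\to\uB$, so equalities in $\uB$ descend to equalities in $\underline{\h}$. Once this is verified and the dual kernel construction is recorded, every morphism in $\underline{\h}$ has a kernel and a cokernel, proving that $\underline{\h}$ is preabelian.
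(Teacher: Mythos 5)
Your proposal is correct and follows the same route as the paper: form $c_f$ as in Definition \ref{MF}, compose with the $+$-reflection of $C_f$ from Definition \ref{re}, and chain Lemma \ref{14} with Proposition \ref{8} for the universal property, with kernels handled dually. One minor slip: it is $B\in\B^-$ (via Lemma \ref{P1} applied to $B\rightarrowtail C_f\twoheadrightarrow S^A$), not $A\in\B^-$, that yields $C_f\in\B^-$, though this is harmless since you take both $A,B\in\h$.
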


\begin{proof}
We only show the construction of the cokernel. For any $A,B\in \h$ and any morphism $f:A\rightarrow B$, by Definition \ref{MF}, since $A,B\in \B^-$, it follows $\underline {c_ff}=0$ and $C_f\in \B^-.$
By Proposition \ref{8}, there exists ${c_f}^+:C_f\rightarrow {C_f}^+$ where ${C_f}^+\in \h$ by Lemma \ref{3.1}.
We claim that $\underline {{c_f}^+c_f}:B\rightarrow {C_f}^+$ is the cokernel of $\underline f$.\\
Let $Q$ be any object in $\h$, and let $r:B\rightarrow Q$ be any morphism satisfying $\underline {rf}=0$, then by Lemma \ref{14} and Proposition \ref{8}, there exists a commutative diagram
$$\xymatrix{
&Q\\
A \ar[ur]^0 \ar[r]_{\underline f} &B \ar[u]^{\underline r} \ar[r]_{\underline {c_f}} &{C_f} \ar[r]_{\underline {{c_f}^+}} \ar@{.>}[ul]^{\underline a} &{{C_f}^+.} \ar@{.>}@/_/[ull]^{\underline b}}
$$
The uniqueness of $\underline b$ follows from Lemma \ref{14} and Proposition \ref{8}.
\end{proof}

\begin{cor}\label{eq3}
Let $f:A\rightarrow B$ be a morphism in $\h$, the the followings are equivalent:
\begin{itemize}
\item[(a)] $\underline f$ is epimorphic in $\underline \h$.

\item[(b)] ${C_f}^+\in \W.$

\item[(c)] $C_f\in \U.$
\end{itemize}
\end{cor}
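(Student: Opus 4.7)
My plan is to read the corollary as a bookkeeping statement combining two earlier results: the explicit construction of cokernels in $\underline\h$ given by Theorem \ref{15}, and the tripartite equivalence of Lemma \ref{eq}. The first handles (a)$\Leftrightarrow$(b), and the second gives (b)$\Leftrightarrow$(c).

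For (a)$\Leftrightarrow$(b), the key input is that Theorem \ref{15} identifies $\underline{{c_f}^+ c_f}\colon B\to {C_f}^+$ as the cokernel of $\underline f$ in $\underline\h$, with ${C_f}^+\in\h$ by Lemma \ref{3.1} (since $A\in\B^-$ forces $C_f\in\B^-$ via Lemma \ref{P1}(a), and then $b^+$ lands in $\h$). In any preabelian category, a morphism is epic exactly when the codomain of its cokernel is a zero object, so (a) is equivalent to ${C_f}^+$ being zero in $\underline\h$. To finish, I will use the standard fact that an object $X\in\h$ is zero in $\underline\h=\h/\W$ if and only if $X\in\W$: if $\mathrm{id}_X$ factors through some $W\in\W$, then $X$ is a direct summand of $W$, and $\W=\T\cap\U$ is closed under direct summands because both $\T$ and $\U$ are (Definition \ref{2}). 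This yields (a)$\Leftrightarrow$(b).

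For (b)$\Leftrightarrow$(c), I just apply Lemma \ref{eq} to the object $C_f\in\B$. That lemma, stated for an arbitrary object of $\B$, directly equates $(C_f)^+\in\W$ with $C_f\in\U$, which is precisely the equivalence we want.

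There is no real obstacle here: everything substantive has been built in the preceding sections. The only point requiring a line of justification is the observation that zero objects of $\underline\h$ are exactly the objects of $\W$, and this follows at once from the direct-summand closure of $\W$ built into the definition of a cotorsion pair.
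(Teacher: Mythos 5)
Your proposal is correct and follows essentially the same route as the paper: Theorem \ref{15} identifies $\underline{{c_f}^+c_f}$ as the cokernel of $\underline f$, the general preabelian fact that a morphism is epic iff its cokernel object is zero reduces (a) to ${C_f}^+$ being zero in $\underline\h$, the direct-summand closure of $\W$ turns that into (b), and Lemma \ref{eq} gives (b)$\Leftrightarrow$(c). One small slip: in your parenthetical you wrote that $A\in\B^-$ forces $C_f\in\B^-$; in fact it is $B\in\B^-$, applied to the short exact sequence $B\rightarrowtail C_f\twoheadrightarrow S^A$ with $S^A\in\s\subseteq\U$ via Lemma \ref{P1}(a), that gives $C_f\in\B^-$ (the hypothesis $A\in\B^-$ is what makes $C_f$ definable at all). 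Since $f$ is a morphism in $\h$, both $A$ and $B$ lie in $\B^-$, so the conclusion is unaffected.
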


\begin{proof}
The equivalence of (b) and (c) is given by Lemma \ref{eq}.\\
By Theorem \ref{15}, $\underline {{c_f}^+c_f}$ is the cokernel of $\underline f$ in $\underline \h$. The equivalence of (a) and (b) follows immediately by this argument.
\end{proof}

\section{Abelianess of the hearts of cotorsion pairs}

In this section we fix a cotorsion pair $(\U,\V)$. We will prove that the heart $\underline \h=\B^+\cap \B^-/\U\cap\V$ of a cotorsion pair is abelian.

\begin{lem}\label{17}
Let $A,B\in \h$, and let
$$\xymatrix{C \ar@{ >->}[r]^{g} &A \ar@{->>}[r]^{f} &B}$$
be a short exact sequence in $\B$. If $\underline f$ is epimorphic in $\underline \h$, then $C$ belongs to $\B^-$.
\end{lem}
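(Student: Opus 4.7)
The plan is to combine the given short exact sequence $C\rightarrowtail A\twoheadrightarrow B$ with a $\B^{-}$-resolution of $A$ via Proposition \ref{7}(b), and then read off $C\in \B^{-}$ from the hypothesis using Corollary \ref{eq3}, exploiting the fact that this section treats a single cotorsion pair, so that $\s=\U$.

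First I would observe that since $A\in \h\subseteq \B^{-}$, there is a short exact sequence $A \rightarrowtail T^{A} \twoheadrightarrow S^{A}$ with $T^{A}\in \W$ and $S^{A}\in \s$. Applying Proposition \ref{7}(b) to this sequence together with $C\rightarrowtail A\twoheadrightarrow B$, I obtain a $3\times 3$ commutative diagram of short exact sequences
$$\xymatrix{
C \ar@{=}[d] \ar@{ >->}[r]^{g} &A \ar@{ >->}[d]^{w^{A}} \ar@{->>}[r]^{f} &B \ar@{ >->}[d]\\
C \ar@{ >->}[r] &T^{A} \ar@{->>}[r] \ar@{->>}[d] &R \ar@{->>}[d]\\
&S^{A} \ar@{=}[r] &S^{A}
}$$
in which the upper-right square is both a push-out and a pull-back. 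But that push-out square is exactly the construction of $c_{f}:B\rightarrowtail C_{f}$ given in Definition \ref{MF}, so $R=C_{f}$. The middle row therefore reads $C\rightarrowtail T^{A}\twoheadrightarrow C_{f}$ with $T^{A}\in \W$.

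Finally, since Section 4 treats a single cotorsion pair $(\U,\V)$ regarded as the twin cotorsion pair $(\U,\V),(\U,\V)$, we have $\s=\U$. The hypothesis that $\underline f$ is epimorphic in $\underline\h$, combined with Corollary \ref{eq3}, yields $C_{f}\in \U=\s$. Hence the middle row of the diagram above is a short exact sequence with $T^{A}\in \W$ and $C_{f}\in \s$, which is exactly a witness for $C\in \B^{-}$. The only step requiring any thought is the identification of $R$ with $C_{f}$; once this is made, the conclusion follows at once from Corollary \ref{eq3} and the identity $\s=\U$ in force throughout this section.
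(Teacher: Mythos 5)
Your proof is correct and takes essentially the same approach as the paper: the paper also builds the $3\times3$ diagram whose middle column/row is $C\rightarrowtail W^A\twoheadrightarrow C_f$ (it obtains this diagram by extending the push-out square of Definition \ref{MF} rather than by explicitly invoking Proposition \ref{7}(b), but this is the same construction), and then concludes $C\in\B^-$ from $C_f\in\U=\s$ via Corollary \ref{eq3}.
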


\begin{proof}
As $\underline f$ is epimorphic in $\underline \h$, we get $C_f\in \U$ by Corollary \ref{eq3}. By Definition \ref{MF}, we get following commutative diagram
\begin{equation}\label{F4}
$$\quad \quad \quad \quad \quad \quad \quad \quad \quad \quad \quad \quad \quad \quad \quad \quad \quad \quad \quad \xymatrix{
C \ar@{ >->}[d]_g \ar@{=}[r] &C \ar@{ >->}[d]^h\\
A \ar@{ >->}[r]^{w^A} \ar[d]_{f} \ar@{}[dr]|{PO} &{W^A} \ar@{->>}[r] \ar[d] &{U^A} \ar@{=}[d]\\
B \ar@{ >->}[r]_{c_f} &{C_f} \ar@{->>}[r] &{U^A}.}
$$
\end{equation}
The middle column shows that $C\in \B^-$.
\end{proof}

We need the following lemma to prove our theorem.

\begin{lem}\label{epi}
\begin{itemize}
\item[(a)] Let $f:A\rightarrow B$ be a morphism in $\B$ with $B\in \B^+$, then there exists a deflation $\alpha=\svech{f}{-w_B}:A\oplus W_B\twoheadrightarrow B$ in $\B$ such that $\underline \alpha=\underline f$.

\item[(b)] Let $f:A\rightarrow B$ be a morphism in $\B$ with $A\in \B^-$, then there exists an inflation $\alpha=\svecv{f}{-w^A}:A\rightarrowtail B\oplus W^A$ in $\B$ such that $\underline {\alpha'}=\underline f$.
\end{itemize}
\end{lem}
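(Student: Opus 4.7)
The plan is to prove (a) directly; part (b) then follows by dualization, applying (a) to the twin cotorsion pair $(\V^{\op},\U^{\op}),(\T^{\op},\s^{\op})$ on $\B^{\op}$ from Remark~2.5(b), for which one checks that $(\B^{\op})^+$ coincides with $\B^-$ in $\B$ (the defining short exact sequence $B\rightarrowtail W^A\twoheadrightarrow S^A$ in $\B$ reverses to the one required for $(\B^{\op})^+$).

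For (a), since $B\in\B^+$, Definition~\ref{5}(a) furnishes a short exact sequence $V_B\rightarrowtail W_B\xrightarrow{w_B}B$ with $W_B\in\W$ and $V_B\in\V$. I factor
$$\alpha\;=\;\svech{f}{-w_B}\;:\;A\oplus W_B\xrightarrow{\;1_A\oplus w_B\;}A\oplus B\xrightarrow{\;\svech{f}{-1_B}\;}B.$$
The first arrow is a deflation, being the direct sum of the deflation $w_B$ with the identity (exact structures are stable under direct sums). The second arrow is a split epimorphism, admitting $\svecv{0}{-1_B}:B\to A\oplus B$ as a section, hence a deflation. Since the composition of deflations is a deflation by axiom~(b) of the exact structure, $\alpha$ is a deflation.

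For the equality $\underline\alpha=\underline f$ in $\uB$, let $\pi_A:A\oplus W_B\to A$ denote the projection. Then $\alpha-f\pi_A=\svech{0}{-w_B}$ factors through $W_B\in\W$, so $\underline\alpha=\underline{f\pi_A}$ in $\uB$. Since $W_B\in\W$ becomes zero in $\uB$, the projection $\underline{\pi_A}$ is an isomorphism $A\oplus W_B\simeq A$ in $\uB$; identifying these two objects via $\underline{\pi_A}$, the morphism $\underline\alpha$ becomes precisely $\underline f$.

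I do not anticipate any conceptual obstacle: the only substantive content is the standard factorization trick that exhibits $\svech{f}{-w_B}$ as the composition of a split epimorphism with a summand-wise deflation, together with the routine observation that $W_B$-components are annihilated upon passage to the quotient $\uB=\B/\W$.
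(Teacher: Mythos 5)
Your proof is correct, and it takes a genuinely different (though equally standard) route from the paper. The paper forms the pull-back of $f$ along the deflation $w_B$ and then invokes the dual of Proposition~\ref{PO} (the characterization of pull-back squares by the associated two-term sequence) to obtain directly the short exact sequence $C\rightarrowtail A\oplus W_B \overset{\alpha}{\twoheadrightarrow} B$; this simultaneously produces the kernel $C$ of $\alpha$, though that extra information is not used in this lemma. You instead factor $\alpha=\svech{f}{-1_B}\circ(1_A\oplus w_B)$ and argue that each factor is a deflation (the first as a direct sum of deflations, the second as a split epimorphism, which is isomorphic to the projection $A\oplus B\twoheadrightarrow B$ via the automorphism $\left(\begin{smallmatrix}1&0\\ f&-1\end{smallmatrix}\right)$ of $A\oplus B$ and hence a deflation), then close with axiom~(b). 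Your verification of $\underline\alpha=\underline f$ via $\alpha-f\pi_A=\svech{0}{-w_B}$ factoring through $W_B\in\W$, together with $\underline{\pi_A}$ being an isomorphism in $\uB$, is sound; one could also note more directly that $\alpha\circ\iota_A=f$ on the nose, where $\iota_A:A\to A\oplus W_B$ is the inclusion, and $\underline{\iota_A}$ is the inverse of $\underline{\pi_A}$. Your reduction of (b) to (a) by passing to $\B^{\op}$ matches the paper's ``the second is dual.'' Net difference: the paper's route is the pull-back trick native to exact categories and yields the kernel for free; your route is a slightly more elementary matrix factorization that avoids Proposition~\ref{PO} entirely.
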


\begin{proof}
We only show the first one, the second is dual.\\
As $B\in \B^+$, it admits a short exact sequence
$$\xymatrix{V_B \ar@{ >->}[r] &W_B \ar@{->>}[r]^{w_B} &B.}$$
Take a pull-back of $f$ and $w_B$, we get a commutative diagram
$$\xymatrix{
V_B \ar@{=}[d] \ar@{ >->}[r] &C \ar[d] \ar@{->>}[r] &A \ar[d]^f\\
V_B \ar@{ >->}[r] &W_B \ar@{->>}[r]_{w_B} &B.
}
$$
By dual of Proposition \ref{PO}, we get a short exact sequence
$$\xymatrix{C \ar@{ >->}[r] &A\oplus W_B \ar@{->>}[rr]^-{\alpha=\svech{f}{-w_B}} &&B}$$
and consequently $\alpha$ is a deflation and $\underline \alpha=\underline f$.
\end{proof}

\begin{thm}\label{18}
For any cotorsion pair $(\U,\V)$ on $\B$, its heart
$\underline \h$ is an abelian category.
\end{thm}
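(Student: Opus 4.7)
My plan exploits the fact that by Theorem \ref{15} the heart $\underline\h$ is already preabelian, so it only remains to verify that every epimorphism in $\underline\h$ is a cokernel and every monomorphism is a kernel. Since the cotorsion pair $(\U,\V)$ dualises to a cotorsion pair $(\V^{\op},\U^{\op})$ on $\B^{\op}$ whose heart is $(\underline\h)^{\op}$, the monomorphism statement follows by applying the epimorphism statement to the opposite cotorsion pair; hence I only treat the epimorphism case.

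Let $\underline f\colon A\to B$ be an epimorphism in $\underline\h$. By Lemma \ref{epi}(a) I replace $f$ by the deflation $\alpha=\svech{f}{-w_B}\colon A\oplus W_B\twoheadrightarrow B$ (with $\underline\alpha=\underline f$ and $A\oplus W_B\in\h$, since $\h$ is closed under direct sums), and denote the resulting short exact sequence in $\B$ by $K\rightarrowtail A\oplus W_B\twoheadrightarrow B$, with kernel inclusion $\iota\colon K\to A\oplus W_B$. By Lemma \ref{17} one has $K\in\B^-$, so Definition \ref{re} together with Lemma \ref{3.1} produces $K^+\in\h$ and $k^+\colon K\to K^+$. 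Since $A\oplus W_B\in\B^+$, Proposition \ref{8} supplies a lift $g\colon K^+\to A\oplus W_B$ with $gk^+=\iota$. I claim that $\underline\alpha$ is the cokernel of $\underline g$ in $\underline\h$. The vanishing $\underline{\alpha g}=0$ is immediate because $\alpha gk^+=\alpha\iota=0$ and $\Hom_{\uB}(\underline{k^+},B)$ is injective by Proposition \ref{8} (since $B\in\B^+$).

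For the universal property, take $h\colon A\oplus W_B\to Q$ in $\h$ with $\underline{hg}=0$. Then $h\iota=hgk^+$ factors through $\W$, say $h\iota=wv$ with $v\colon K\to W_0$, $w\colon W_0\to Q$, $W_0\in\W$. The pushout diagram from the proof of Lemma \ref{17}, applied to $\alpha$, supplies a short exact sequence $K\rightarrowtail W^{A\oplus W_B}\twoheadrightarrow C_\alpha$ with $W^{A\oplus W_B}\in\W$ and kernel inclusion $w^{A\oplus W_B}\iota$, and with $C_\alpha\in\U$ by Corollary \ref{eq3}. Because $W_0\in\V$ and $\Ext^1_\B(C_\alpha,W_0)=0$ by the cotorsion pair condition $\Ext^1_\B(\U,\V)=0$, I can extend $v$ to $\tilde v\colon W^{A\oplus W_B}\to W_0$ satisfying $\tilde v\,w^{A\oplus W_B}\iota=v$. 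Setting $h':=h-w\tilde v\,w^{A\oplus W_B}$ yields $h'\iota=0$ in $\B$, and since $\alpha$ is the cokernel of $\iota$ in $\B$ there exists $\bar h\colon B\to Q$ with $h'=\bar h\alpha$; the correction $w\tilde v\,w^{A\oplus W_B}$ vanishes modulo $\W$, so $\underline h=\underline{\bar h\alpha}=\underline{\bar h f}$. Uniqueness of $\underline{\bar h}$ follows because $\underline\alpha$ is epic.

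The main obstacle is the extension step producing $\tilde v$. It depends on locating a short exact sequence $K\rightarrowtail W\twoheadrightarrow U$ with $W\in\W$ and $U\in\U$ — provided precisely by the middle column of Lemma \ref{17}'s diagram — and, crucially, on the full cotorsion pair vanishing $\Ext^1_\B(\U,\V)=0$. For a genuine twin cotorsion pair only $\Ext^1_\B(\s,\V)=0$ is available while $C_\alpha$ lies in $\U$ but not necessarily in $\s$, which is exactly why the twin case stops at semi-abelian.
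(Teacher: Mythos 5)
Your proof is correct and follows essentially the same route as the paper: reduce to a deflation via Lemma \ref{epi}, invoke Lemma \ref{17} to place the kernel in $\B^-$, lift via Proposition \ref{8}, and use the left $\V$-approximation property of $K\rightarrowtail W^{A\oplus W_B}$ (coming from $\Ext^1_\B(C_\alpha,\V)=0$ with $C_\alpha\in\U$) to obtain the factorisation. The only cosmetic difference is that you retain the notation $A\oplus W_B$ where the paper silently renames it $A$ after the reduction, and your closing remark correctly pinpoints why the extension step fails for genuine twin cotorsion pairs.
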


\begin{proof}
Since $\underline \h$ is preabelian, it remains to show the following:
\begin{itemize}
\item[(a)] If $\underline f$ is epimorphic in $\underline \h$, then $\underline f$ is a cokernel of some morphism in $\underline \h$.

\item[(b)] If $\underline f$ is monomorphic in $\underline \h$, then $\underline f$ is a kernel of some morphism in $\underline \h$.
\end{itemize}
We only show (a), since (b) is dual.\\
For any morphism $\underline f:A\rightarrow B$ which is epimorphic in $\underline \h$, by Lemma \ref{epi}, it is enough to consider the case that $f$ is a deflation.\\
Let $f$ admit a short exact sequence:
$$\xymatrix{C \ar@{ >->}[r]^{g} &A \ar@{->>}[r]^{f} &B.}$$
By Lemma \ref{17}, we have $C\in \B^-$. By Proposition \ref{8}, there exists
$$c^+:C\rightarrow C^+$$
where $C^+$ lies in $\h$ by Lemma \ref{3.1}. As $A\in \B^+$, there exists $a:C^+\rightarrow A$ such that $ac^+=g$.
$$\xymatrix{
C \ar[dr]_{c^+} \ar[rr]^g &&A\\
&{C^+.} \ar[ur]_a}
$$
Since $\underline {fac^+}=\underline {fg}=0$, we have $\underline {fa}=0$ by Proposition \ref{8}. We claim that $\underline f$ is the cokernel of $\underline a$.\\
Let $Q$ be any object in $\h$ and $r:A\rightarrow Q$ be any morphism. By Proposition \ref{8}, $\underline {rg}=0$ if and only if $\underline {ra}=0$.\\
So it is enough to show that any $\underline r$ satisfying $\underline {rg}=0$ factors through $\underline f$.\\
If $\underline {rg}=0$, $rg$ factors through $\W$. Consider the second column of diagram \eqref{F4}, since $h$ is a left $\V$-approximation of $C$, there exists a morphism $c:W^A\rightarrow Q$ such that $rg=ch$. Since $h=w^Ag$, we get that $(r-cw^A)g=0$. Thus $r-cw^A$ factors through $f$, which implies that $\underline r$ factors through $\underline f$.
\end{proof}

\section{\underline{$\mathcal {H}$}$\text{ }$is semi-abelian}

In the following sections, we fix a twin cotorsion pair $(\s,\T),(\U,\V)$.

\begin{defn}\label{semi}
A preabelian category $\mathcal A$ is called \emph{left semi-abelian} if in any pull-back diagram
$$\xymatrix{
A \ar[r]^{\alpha} \ar[d]_{\beta} &B \ar[d]^{\gamma}\\
C \ar[r]_{\delta} &D}
$$
in $\mathcal A$, $\alpha$ is an epimorphism whenever $\delta$ is a cokernel. \emph{Right semi-abelian} is defined dually. $\mathcal A$ is called \emph{semi-abelian} if it is both left and right semi-abelian. In this section we will prove that the heart $\underline \h$ of a twin cotorsion pair is semi-ableian.
\end{defn}

\begin{lem}\label{4.1}
If morphism $\beta \in \Hom_{\underline \h}(B,C)$ is a cokernel of a morphism $\underline f\in \Hom_{\underline \h}(A,B)$, then $B$ admits a short exact sequence
$$B\rightarrowtail C'\twoheadrightarrow S$$
where $C'\in \h$, $C\simeq C'$ in $\underline \h$ and $S\in \s$.
\end{lem}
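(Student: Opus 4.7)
The plan is to realize the hypothesis through the explicit construction of cokernels given in the proof of Theorem \ref{15}, and then to read off the required short exact sequence from the push-out diagrams used there. Since cokernels are unique up to isomorphism, I may replace $\beta$ by the canonical cokernel of $\underline f$: lift $\underline f$ to a morphism $f:A\to B$ in $\B$, form $c_f:B\rightarrowtail C_f$ via the push-out of Definition \ref{MF}, and then form ${c_f}^+:C_f\rightarrowtail {C_f}^+$ via Definition \ref{re}, so that the cokernel of $\underline f$ in $\underline\h$ is $\underline{{c_f}^+c_f}:B\to{C_f}^+$.

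Setting $C':={C_f}^+$, I first check $C'\in\h$. Definition \ref{MF} provides a short exact sequence $B\rightarrowtail C_f\twoheadrightarrow S^A$ with $S^A\in\s\subseteq\U$; since $B\in\h\subseteq\B^-$, Lemma \ref{P1}(a) gives $C_f\in\B^-$, and Lemma \ref{3.1} then yields ${C_f}^+\in\h$. Because $\beta$ and $\underline{{c_f}^+c_f}$ are both cokernels of $\underline f$, we obtain $C\simeq C'$ in $\underline\h$.

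It remains to exhibit a short exact sequence $B\rightarrowtail C'\twoheadrightarrow S$ with $S\in\s$. By construction $c_f$ sits in a short exact sequence with cokernel $S^A\in\s$, and ${c_f}^+$ sits in a short exact sequence (the right column of the diagram in Definition \ref{re}) with cokernel $S^U\in\s$. Since the composition of two inflations is again an inflation, the composite ${c_f}^+c_f$ is an inflation fitting into some short exact sequence $B\rightarrowtail C'\twoheadrightarrow S$. Applying Proposition \ref{7}(b) to the two short exact sequences $B\rightarrowtail C_f\twoheadrightarrow S^A$ and $C_f\rightarrowtail C'\twoheadrightarrow S^U$ yields a commutative $3\times 3$ diagram in which $S$ appears as the extension in a short exact sequence $S^A\rightarrowtail S\twoheadrightarrow S^U$. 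Since $\s$ is closed under extension by Lemma \ref{3}(c), we conclude $S\in\s$.

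The whole argument is routine bookkeeping with push-out squares in $\B$; the only place where anything substantive is used is the final appeal to extension-closedness of $\s$ to identify the cokernel $S$ as lying in $\s$.
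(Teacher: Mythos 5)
Your proof is correct and follows essentially the same route as the paper: realize the cokernel as $\underline{{c_f}^+c_f}$, splice the two inflation sequences $B\rightarrowtail C_f\twoheadrightarrow S^A$ and $C_f\rightarrowtail {C_f}^+\twoheadrightarrow S^U$ via Proposition~\ref{7}(b), and identify the resulting cokernel $S$ as an extension of $S^A$ by $S^U$, hence in $\s$. The only difference is that you spell out explicitly why ${C_f}^+\in\h$ (via Lemma~\ref{P1}(a) and Lemma~\ref{3.1}), which the paper leaves implicit in its citation of Theorem~\ref{15}.
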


\begin{proof}
Let $\beta$ be the cokernel of $\underline f:A\rightarrow B$. By Theorem \ref{15}, the cokernel of $\underline f$ is given by $\underline {{c_f}^+c_f}$. Therefore ${C_f}^+\simeq C$ in $\underline \h$. Consider diagram \eqref{F4} and the diagram which induces $(C_f)^+$ by Definition \ref{re}:
$$ \xymatrix{
V \ar@{ >->}[r] \ar@{=}[d] &U \ar@{->>}[r] \ar@{ >->}[d] &{C_f} \ar@{ >->}[d]^{{c_f}^+}\\
V \ar@{ >->}[r] &T' \ar@{->>}[r] \ar@{->>}[d] &{{C_f}^+} \ar@{->>}[d]\\
&S' \ar@{=}[r] &S'\\
}
$$
By Proposition \ref{7}, we obtain the following commutative diagram of short exact sequences
$$\xymatrix{
B \ar@{=}[d] \ar@{ >->}[r]^{c_f} &{C_f} \ar@{ >->}[d]_{{c_f}^+} \ar@{->>}[r] &{S^A} \ar@{ >->}[d]\\
B \ar@{ >->}[r] &{{C_f}^+} \ar@{->>}[r] \ar@{->>}[d] &Q \ar@{->>}[d]\\
&S' \ar@{=}[r] &S'.
}
$$
From the third column we get $Q\in \s$. Hence we get the required short exact sequence.
\end{proof}

\begin{prop}\label{eq2}
Let $\xymatrix{A \ar@{ >->}[r]^{f} &B \ar@{->>}[r]^{g} &C}$ be a short exact sequence in $\B$ with $f$ in $\h$. If $g$ factors through $\U$, then $\underline f$ is epimorphic in $\underline \h$.
\end{prop}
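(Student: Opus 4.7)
My plan is to invoke Corollary \ref{eq3} and reduce the statement to showing $C_f \in \U$, which by Lemma \ref{3}(a) is equivalent to $\Ext^1_\B(C_f, V) = 0$ for every $V \in \V$. The idea is to exhibit $C_f$ as fitting into two useful short exact sequences extracted from the push-out in Definition \ref{MF}, and then to use the hypothesis that $g$ factors through $\U$ to force the relevant $\Ext^1$ groups to vanish.

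Concretely, since $f$ is itself an inflation, the push-out of $f$ and $w^A$ yields two inflations $c_f : B \rightarrowtail C_f$ and $\alpha_1 : W^A \rightarrowtail C_f$. Applying Proposition \ref{PO}(d) in both orientations of the push-out square produces the two short exact sequences
$$B \rightarrowtail C_f \twoheadrightarrow S^A, \qquad W^A \rightarrowtail C_f \twoheadrightarrow C,$$
together with a deflation $\pi: C_f \twoheadrightarrow C$ satisfying $\pi c_f = g$. Applying $\Hom_\B(-,V)$ and using that $S^A \in \s \subseteq \U$ and $W^A \in \W \subseteq \U$, so that $\Ext^1_\B(S^A, V) = \Ext^1_\B(W^A, V) = 0$, I read off from the long exact sequences that $c_f^*: \Ext^1_\B(C_f, V) \hookrightarrow \Ext^1_\B(B, V)$ is injective and $\pi^* : \Ext^1_\B(C, V) \twoheadrightarrow \Ext^1_\B(C_f, V)$ is surjective.

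The key move is then the observation that a factorization $g = \beta \alpha$ with $\alpha : B \to U$, $\beta : U \to C$, $U \in \U$, implies $g^* = \alpha^* \beta^* = 0$ on $\Ext^1_\B(-, V)$, since this composite factors through $\Ext^1_\B(U, V) = 0$. Combined with $g = \pi c_f$ this gives $c_f^* \pi^* = 0$; the injectivity of $c_f^*$ and the surjectivity of $\pi^*$ then force $\Ext^1_\B(C_f, V) = 0$. As $V \in \V$ was arbitrary, $C_f \in \U$, and Corollary \ref{eq3} finishes the proof. I do not anticipate a serious obstacle; the only point requiring care is verifying $\pi c_f = g$ in the ``other orientation'' of the push-out, which is delivered by Proposition \ref{PO}(d) applied to the transposed square.
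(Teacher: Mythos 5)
Your proof is correct and follows essentially the same route as the paper: both reduce via Corollary \ref{eq3} to showing $C_f\in\U$, both extract the two short exact sequences $B\rightarrowtail C_f\twoheadrightarrow S^A$ and $W^A\rightarrowtail C_f\twoheadrightarrow C$ from the push-out square, and both use that $g$ factors through $\U$ to kill $\Ext^1_\B(g,\V)$ and hence $\Ext^1_\B(C_f,\V)$. Your phrasing in terms of injectivity of $c_f^*$ and surjectivity of $\pi^*$ is a slightly cleaner packaging of the same diagram chase the paper performs.
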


\begin{proof}
By Corollary \ref{eq3}, it suffices to show that $C_f \in \U$.\\
By definition of $c_f:B\rightarrow C_f$, there is a commutative diagram of short exact sequences
$$\xymatrix{
A \ar@{ >->}[r]^{w^A} \ar@{ >->}[d]_{f} \ar@{}[dr]|{PO} &{W^A} \ar@{->>}[r] \ar@{ >->}[d] &{S^A} \ar@{=}[d]\\
B \ar@{ >->}[r]_{c_f} \ar@{->>}[d]_g &{C_f} \ar@{->>}[r] \ar@{->>}[d] &{S^A}\\
C \ar@{=}[r] &C.}
$$
Since $\Ext^1_\B(\W,\V)=0$, we get the following commutative diagram of exact sequence
$$\xymatrix{
\Ext^1_\B(C,\V) \ar@{=}[d] \ar[rr] &&{\Ext^1_\B(C_f,\V)} \ar[d]^{\Ext^1_\B(c_f,\V)} \ar[rr] &&\Ext^1_\B(W^A,\V)=0 \ar[d]\\
\Ext^1_\B(C,\V) \ar[rr]_{\Ext^1_\B(g,\V)} &&{\Ext^1_\B(B,\V)} \ar[rr]_{\Ext^1_\B(f,\V)} &&{\Ext^1_\B(A,\V)}.}
$$
Then $\Ext^1_\B(c_f,\V)$ factors through $\Ext^1_\B(g,\V)$. We have $\Ext^1_\B(g,\V)=0$ since $g$ factors through $\U$, thus we get $\Ext^1_\B(c_f,\V)=0$. Then from the following exact sequence
$$0=\Ext^1_\B(S^A,\V)\rightarrow \Ext^1_\B(C_f,\V)\xrightarrow{\Ext^1_\B(c_f,\V)=0} \Ext^1_\B(B,\V)$$
we obtain that $\Ext^1_\B(C_f,\V)=0$, which implies $C_f\in \U$.\\
\end{proof}

\begin{lem}\label{4.2}
Suppose $X\in \B^-$ admits a short exact sequence
$$\xymatrix{X \ar@{ >->}[r]^{x} &B \ar@{->>}[r] &U}$$
where $B\in \h$ and $U\in \U$. Then the unique morphism $\underline b\in \Hom_{\underline \h}(X^+,B)$ given by Proposition \ref{8} which satisfies $\underline {bx^+}=\underline x$ is epimorphic.
\end{lem}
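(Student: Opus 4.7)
The plan is to reduce via Corollary~\ref{eq3} to showing $C_b \in \U$, construct $C_b$ explicitly using Definition~\ref{MF}, fit it into a $3 \times 3$ diagram built from the given short exact sequence, and then exploit a factorization of a canonical inflation into $C_b$ through an object of $\U$ to force the relevant $\Ext^1$-group to vanish.

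Concretely, since $X^+, B \in \h$, Corollary~\ref{eq3} reduces the claim to $C_b \in \U$. By Proposition~\ref{8} I can lift $\underline{b}$ to an actual morphism $b \in \Hom_\B(X^+, B)$ satisfying $bx^+ = x$ in $\B$. As $X^+ \in \h \subseteq \B^-$, applying Definition~\ref{MF} to $b$ (via the short exact sequence $X^+ \rightarrowtail W^{X^+} \twoheadrightarrow S^{X^+}$ witnessing $X^+ \in \B^-$) constructs $C_b$, a short exact sequence $B \rightarrowtail C_b \twoheadrightarrow S^{X^+}$ with $W^{X^+} \in \W$ and $S^{X^+} \in \s$, and the pushout identity $c_b b = \tau w^{X^+}$, where $\tau: W^{X^+} \to C_b$ is the other pushout leg. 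Stacking this sequence with the given one $X \rightarrowtail B \twoheadrightarrow U$ via Proposition~\ref{7}(b) produces a $3 \times 3$ commutative diagram whose middle row is a short exact sequence $X \stackrel{\iota}{\rightarrowtail} C_b \twoheadrightarrow Z$ and whose right column $U \rightarrowtail Z \twoheadrightarrow S^{X^+}$ exhibits $Z$ as an extension of $S^{X^+} \in \s \subseteq \U$ by $U \in \U$, so $Z \in \U$ by Lemma~\ref{3}(c). For any $V \in \V$, applying $\Hom_\B(-, V)$ to the middle row and using $\Ext^1_\B(Z, V) = 0$ yields an injection $\iota^* : \Ext^1_\B(C_b, V) \hookrightarrow \Ext^1_\B(X, V)$.

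The decisive step — and the main obstacle — is to recognise that $\iota$ factors through $W^{X^+} \in \U$: from $bx^+ = x$ and $c_b b = \tau w^{X^+}$, I compute
$$\iota = c_b x = c_b b x^+ = \tau w^{X^+} x^+,$$
so $\iota$ factors as $X \to W^{X^+} \xrightarrow{\tau} C_b$. By functoriality of $\Ext^1_\B$, $\iota^*$ factors through $\Ext^1_\B(W^{X^+}, V)$, which vanishes since $W^{X^+} \in \W \subseteq \U$ (Lemma~\ref{3}(a)). Hence $\iota^* = 0$; combined with the injectivity above, $\Ext^1_\B(C_b, V) = 0$ for every $V \in \V$, so Lemma~\ref{3}(a) gives $C_b \in \U$ and Corollary~\ref{eq3} concludes. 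Without spotting this factorization through $W^{X^+}$, the $\Ext$-chase only embeds $\Ext^1_\B(C_b, V)$ into $\Ext^1_\B(X, V)$, for which there is no a priori vanishing, so the entire argument hinges on writing $\iota = \tau w^{X^+} x^+$.
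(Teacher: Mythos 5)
Your proof is correct, and it takes a genuinely more direct route than the paper's. The paper first chooses a left $\W$-approximation $a:X^+\rightarrowtail W$, forms the auxiliary inflation $\svecv{b}{-a}:X^+\rightarrowtail B\oplus W$ and its cokernel $C$ by push-out, then uses Proposition~\ref{7} and a second push-out to manufacture an object $C'\in\U$ through which the deflation $B\oplus W\twoheadrightarrow C$ factors; only then does it invoke Proposition~\ref{eq2} (whose own proof runs through $C_{\svecv{b}{-a}}$ and Corollary~\ref{eq3}), and it still has to identify $\underline{\svecv{b}{-a}}$ with $\underline b$ via $B\oplus W\cong B$ in $\underline\h$. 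You skip all of this scaffolding: you go straight for the cokernel witness $C_b$ of $b$ itself, apply Corollary~\ref{eq3} to reduce to $C_b\in\U$, and then splice the defining sequence $B\rightarrowtail C_b\twoheadrightarrow S^{X^+}$ with the hypothesis sequence $X\rightarrowtail B\twoheadrightarrow U$ via Proposition~\ref{7}(b). The $3\times3$ diagram gives $X\xrightarrow{\iota}C_b\twoheadrightarrow Z$ with $Z\in\U$ (so $\Ext^1_\B(C_b,\V)\hookrightarrow\Ext^1_\B(X,\V)$), and the decisive point — which you correctly flag as the non-obvious one — is the factorization $\iota=c_bx=c_bbx^+=\tau w^{X^+}x^+$ through $W^{X^+}\in\W$, which forces $\Ext^1_\B(\iota,\V)=0$ and hence $\Ext^1_\B(C_b,\V)=0$. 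Both arguments are in the end the same species of $\Ext^1$-vanishing chase, but yours eliminates the detour through $\svecv{b}{-a}$, $C$, $C'$ and Proposition~\ref{eq2}, making the proof shorter and more self-contained at the cost of having to spot that one factorization of $\iota$.
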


\begin{proof}
By Definition \ref{re}, there exists a short exact sequence
$$\xymatrix{X \ar@{ >->}[r]^{x^+} &X^+ \ar@{->>}[r] &S}$$
where $S\in \s$. By Proposition \ref{8}, there exits $b:X^+\rightarrow B$ such that $bx^+=x$. Since $X\in \B^-$, we obtain $X^+\in \h$ by Lemma \ref{3.1}. Hence $X^+$ admits a short exact sequence
$$\xymatrix{X^+ \ar@{ >->}[r]^a &W \ar@{->>}[r] &S'}$$
where $W\in \W$ and $S'\in \s$. Take a push-out of $a$ and $b$, we get the following commutative diagram
$$\xymatrix{
X^+ \ar@{ >->}[r]^a \ar[d]_b &W \ar@{->>}[r] \ar[d] &S' \ar@{=}[d]\\
B \ar@{ >->}[r] &C \ar@{->>}[r] &S'
}
$$
which induces a short exact sequence
$$\xymatrix{X^+ \ar@{ >->}[r]^-{\svecv{b}{-a}} &B\oplus W \ar@{->>}[r] &C}$$
by Proposition \ref{PO}. By Proposition \ref{7}, we obtain the following commutative diagram
$$\xymatrix{
X \ar@{ >->}[r]^{x^+} \ar@{=}[d] &X^+ \ar@{->>}[r] \ar@{ >->}[d]^a &S \ar@{ >->}[d]\\
X \ar@{ >->}[r]_c &W \ar@{->>}[r] \ar@{ ->>}[d] &Q \ar@{->>}[d]\\
&S' \ar@{=}[r] &S'.
}
$$
Take a push-out of $x$ and $c$
$$\xymatrix{
X \ar@{ >->}[r]^c \ar@{ >->}[d]_x &W \ar@{->>}[r] \ar@{ >->}[d] &Q \ar@{=}[d]\\
B \ar@{ >->}[r] \ar@{->>}[d] &C' \ar@{->>}[r] \ar@{->>}[d] &Q\\
U \ar@{=}[r] &U
}
$$
from the second column we obtain that $C'\in \U$ and we get the following short exact sequence
$$\xymatrix{X \ar@{ >->}[r]^-{\svecv{x}{-c}} &B\oplus W \ar@{->>}[r] &C'}$$
by Proposition \ref{PO}. Thus we get the following commutative diagram
$$\xymatrix{
&X \ar@{ >->}[dr]^-{\svecv{x}{-c}} \ar@{}[d]|{\circlearrowright} \ar[dl]_{x^+}\\
X^+ \ar@{ >->}[rr]_-{\svecv{b}{-a}} &&B\oplus W \ar@{->>}[rr] \ar@{->>}[dr] &&C.\\
&&&C' \ar@{.>}[ur]
}
$$
Hence by Proposition \ref{eq2}, $\underline b$ is epimorphic.
\end{proof}

We introduce the following lemma which is an analogue of \cite[Lemma 5.3]{N1}.

\begin{lem}\label{4.4}
Let
$$\xymatrix{
A \ar[r]^{\alpha} \ar[d]_{\beta} &B \ar[d]^{\gamma}\\
C \ar[r]_{\delta} &D}
$$
be a pull-back diagram in $\underline \h$. If there exists an object $X\in \B^-$ and morphisms $x_B:X\rightarrow B$, $x_C:X\rightarrow C$ which satisfy the following conditions, then $\alpha$ is epimorphic in $\underline \h$.
\begin{itemize}
\item[(a)] The following diagram is commutative.
$$\xymatrix{
X \ar[r]^{\underline {x_B}} \ar[d]_{\underline {x_C}} &B \ar[d]^{\gamma}\\
C \ar[r]_{\delta} &D
}
$$

\item[(b)] There exists a short exact sequence $\xymatrix{X \ar@{ >->}[r]^{x_B} &B \ar@{->>}[r] &U}$ with $U\in \U$.
\end{itemize}
\end{lem}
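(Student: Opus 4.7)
The plan is to use the universal property of the pull-back to factor an epimorphism (provided by Lemma~\ref{4.2}) through $\alpha$, whence $\alpha$ itself will be epimorphic in $\underline\h$.

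First, since $X\in \B^-$, Definition~\ref{re} together with Lemma~\ref{3.1} produces an object $X^+\in \h$ and a morphism $x^+:X\to X^+$. Because $B,C,D$ all lie in $\h\subseteq \B^+$, the bijectivity in Proposition~\ref{8} applied to the targets $B$ and $C$ yields unique morphisms $\underline{\beta_B}\in \Hom_{\underline\h}(X^+,B)$ and $\underline{\beta_C}\in \Hom_{\underline\h}(X^+,C)$ such that $\underline{\beta_B}\circ \underline{x^+}=\underline{x_B}$ and $\underline{\beta_C}\circ\underline{x^+}=\underline{x_C}$. Condition~(b) is precisely the hypothesis of Lemma~\ref{4.2}, so that lemma applies and shows that $\underline{\beta_B}$ is epimorphic in $\underline\h$.

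Next I would verify the commutation $\gamma\circ\underline{\beta_B}=\delta\circ\underline{\beta_C}$ in $\Hom_{\underline\h}(X^+,D)$. Post-composing the defining identities for $\underline{\beta_B},\underline{\beta_C}$ with $\gamma$ and $\delta$ and then invoking condition~(a) gives
\[
\gamma\circ\underline{\beta_B}\circ\underline{x^+}=\gamma\circ\underline{x_B}=\delta\circ\underline{x_C}=\delta\circ\underline{\beta_C}\circ\underline{x^+}.
\]
Because $D\in \B^+$, the injectivity in Proposition~\ref{8} upgrades this to the equality $\gamma\circ\underline{\beta_B}=\delta\circ\underline{\beta_C}$. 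The pull-back property of the given square in $\underline\h$ then supplies a morphism $\phi:X^+\to A$ with $\alpha\circ\phi=\underline{\beta_B}$ (and $\beta\circ\phi=\underline{\beta_C}$); since the composition $\alpha\phi=\underline{\beta_B}$ is epimorphic, $\alpha$ is epimorphic as well.

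The one step that needs real care is the passage from $\gamma\underline{\beta_B}\underline{x^+}=\delta\underline{\beta_C}\underline{x^+}$ to $\gamma\underline{\beta_B}=\delta\underline{\beta_C}$: here the uniqueness clause in $\uB$ of Proposition~\ref{8} is what makes $\underline{x^+}$ right-cancellable against morphisms whose target lies in $\B^+$, and the hypothesis $D\in\h\subseteq \B^+$ is used precisely at this point. Once this compatibility is in hand, the rest is a formal application of the pull-back universal property combined with Lemma~\ref{4.2}.
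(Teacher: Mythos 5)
Your proposal is correct and follows essentially the same route as the paper's proof: pass to $X^+$ via Definition \ref{re}, lift $\underline{x_B},\underline{x_C}$ through $\underline{x^+}$ using Proposition \ref{8}, invoke Lemma \ref{4.2} to see that the lift $\underline{\beta_B}$ is epimorphic, cancel $\underline{x^+}$ against the target $D\in\B^+$ to get commutativity, and conclude by the pull-back universal property. The only difference from the paper's argument is notational (the paper writes $f_B,f_C,\eta$ where you write $\underline{\beta_B},\underline{\beta_C},\phi$).
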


\begin{proof}
Take $x^+:X\rightarrow X^+$ as in Definition \ref{re}. Then by Proposition \ref{8}, there exist $f_B:X^+\rightarrow B$ and $f_C:X^+\rightarrow C$ such that $\underline {f_Bx^+}=\underline {x_B}$ and $\underline {f_Cx^+}=\underline {x_C}$. By Lemma \ref{4.2}, $\underline {f_B}$ is epimorphic in $\underline \h$. As $\gamma\underline {x_B}=\delta\underline {x_C}$, we get $\gamma\underline {f_Bx^+}=\delta\underline {f_Cx^+}$, it follows by Proposition \ref{8} that $\gamma\underline {f_B}=\delta\underline {f_C}$. By the definition of pull-back, there exists a morphism $\eta:X^+\rightarrow A$ in $\underline \h$ which makes the following diagram commute.
$$\xymatrix{
X^+ \ar@/^/[drr]^{\underline {f_B}} \ar@/_/[ddr]_{\underline {f_C}} \ar@{.>}[dr]^{\eta}\\
&A \ar[r]^{\alpha} \ar[d]^{\beta} &B \ar[d]^{\gamma}\\
&C \ar[r]_{\delta} &D}
$$
Since $\underline {f_B}$ is epimorphic, we obtain that $\alpha$ is also epimorphic.
\end{proof}

\begin{thm}\label{4.3}
For any twin cotorsion pair $(\s,\T),(\U,\V)$, its heart $\underline \h$ is semi-abelian.
\end{thm}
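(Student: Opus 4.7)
It suffices to prove left semi-abelianness; the right half follows by the same argument applied to the opposite twin cotorsion pair $(\V^{\op},\U^{\op}),(\T^{\op},\s^{\op})$ on $\B^{\op}$, together with the dual forms of Lemmas \ref{4.1} and \ref{4.4}. So fix a pull-back square
$$\xymatrix{A \ar[r]^{\alpha} \ar[d]_{\beta} &B \ar[d]^{\gamma}\\ C \ar[r]_{\delta} &D}$$
in $\underline \h$ with $\delta$ a cokernel. The plan is to produce the data required by Lemma \ref{4.4}, namely an object $X\in \B^-$ and morphisms $x_B$, $x_C$ fulfilling its two conditions; since isomorphisms in $\underline \h$ preserve both pull-backs and epimorphisms, I am free to replace $B$ and $D$ by isomorphic copies whenever convenient.

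By Lemma \ref{4.1} applied to the cokernel $\delta$, I may assume there is an actual short exact sequence $\xymatrix{C \ar@{>->}[r]^{\delta_0} &D \ar@{->>}[r]^p &S}$ in $\B$ with $S\in \s$ and $\delta=\underline{\delta_0}$. Lift $\gamma$ to $\gamma'':B\to D$ in $\B$. Since $D\in \h\subseteq \B^+$, Lemma \ref{epi}(a) upgrades $\gamma''$ to a genuine deflation
$$\gamma_1:=\svech{\gamma''}{-w_D}:B_1\twoheadrightarrow D,$$
where $B_1:=B\oplus W_D\in \h$ is isomorphic to $B$ in $\underline \h$ and $\underline{\gamma_1}=\underline{\gamma''}=\gamma$ under this identification.

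The composite $p\gamma_1:B_1\twoheadrightarrow S$ is then a deflation, whose kernel yields a short exact sequence
$$\xymatrix{X \ar@{>->}[r]^{x_B} &B_1 \ar@{->>}[r]^{p\gamma_1} &S}$$
with $S\in \s\subseteq \U$. Since $B_1\in \B^-$, Lemma \ref{P1}(b) gives $X\in \B^-$. By the universal property of the kernel $\delta_0$ of $p$, there is a unique $x_C:X\to C$ with $\delta_0 x_C=\gamma_1 x_B$ in $\B$; projecting to $\underline \h$ then yields the commutativity $\gamma\,\underline{x_B}=\delta\,\underline{x_C}$ for the pull-back transported along $B\simeq B_1$. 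Lemma \ref{4.4} thus forces (the transported) $\alpha$ to be epimorphic in $\underline \h$, hence so is $\alpha$.

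The chief technical obstacle is that pull-backs of inflations along arbitrary morphisms need not exist inside the exact category $\B$, so one cannot just pull back $\delta_0$ along a lift of $\gamma$. Lemma \ref{epi}(a) circumvents this by converting the lift $\gamma''$ into a genuine deflation $\gamma_1$ on the enlarged object $B_1=B\oplus W_D$ (which is isomorphic to $B$ in $\underline \h$); the composite $p\gamma_1$ then admits an honest kernel in $\B$, supplying both the short exact sequence with cokernel in $\U$ demanded by Lemma \ref{4.4} and the witnessing morphism $x_C:X\to C$.
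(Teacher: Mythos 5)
Your proof is correct and follows essentially the same route as the paper's: reduce to the left case by duality, use Lemma \ref{4.1} to realise $\delta$ by an inflation $C\rightarrowtail D$ with cokernel in $\s$, use Lemma \ref{epi} to upgrade a lift of $\gamma$ to a deflation onto $D$ (the paper also implicitly replaces $B$ by $B\oplus W_D$), take the kernel $X$ of the composite deflation onto $S$, invoke Lemma \ref{P1} to get $X\in\B^-$, and finish via Lemma \ref{4.4}. The only cosmetic difference is that you spell out the kernel universal property to produce $x_C$, whereas the paper cites Proposition \ref{7} for the same commutative diagram.
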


\begin{proof}
By duality, we only show $\underline \h$ is left semi-abelian. Assume we are given a pull-back diagram
$$\xymatrix{
A \ar[r]^{\alpha} \ar[d]_{\beta} &B \ar[d]^{\gamma}\\
C \ar[r]_{\delta} &D}
$$
in $\underline \h$ where $\delta$ is a cokernel. It suffices to show that $\alpha$ becomes epimorphic.\\
By Lemma \ref{4.1}, replacing $D$ by an isomorphic one if necessary, we can assume that there exists an inflation $d:C\rightarrowtail D$ satisfying $\delta=\underline d$, which admits a short exact sequence
$$\xymatrix{C \ar@{ >->}[r]^{d} &D \ar@{->>}[r] &S}$$
where $S\in \s$. As $D\in \B^+$, by Lemma \ref{epi} we can also assume that there exists an deflation $c:B\twoheadrightarrow D$ such that $\gamma=\underline c$. By Proposition \ref{7}, we get the following commutative diagram of short exact sequences
$$\xymatrix{
X \ar@{ >->}[r]^{x_B} \ar@{->>}[d]_{x_C} &B \ar@{->>}[d]^c \ar@{->>}[r] &S \ar@{=}[d]\\
C \ar@{ >->}[r]_d &D \ar@{->>}[r] &S.}
$$
it follows by Lemma \ref{P1} that $X\in \B^-$. Hence by Lemma \ref{4.4} $\alpha$ is epimorphic in $\underline \h$.
\end{proof}

\section{The case where \underline{$\mathcal {H}$}$\text{ }$becomes integral}

\begin{defn}\label{inter}
A preabelian category $\mathcal A$ is called \emph{left integral} if in any pull-back diagram
$$\xymatrix{
A \ar[r]^{\alpha} \ar[d]_{\beta} &B \ar[d]^{\gamma}\\
C \ar[r]_{\delta} &D}
$$
in $\mathcal A$, $\alpha$ is an epimorphism whenever $\delta$ is an epimorphic. \emph{Right integral} is defined dually. $\mathcal A$ is called \emph{integral} if it is both left and right integral.
\end{defn}

In this section we give a sufficient condition where the heart $\underline \h$ becomes integral.\\
Let $\mathcal C$ be a subcategory of $\B$, denote by $\Omega \mathcal C$ (resp. $\Omega^- \mathcal C$) the subcateogy of $\B$ consisting of objects $\Omega C$ (resp. $\Omega^- C$) such that there exists a short exact sequence
\begin{align*}
\Omega C\rightarrowtail P_C\twoheadrightarrow C \text{ } (P\in \mathcal P, C\in \mathcal C)\\
(\text{resp. } C\rightarrowtail I^C\twoheadrightarrow \Omega^- C \text{ } (I\in \mathcal I, C\in \mathcal C)).
\end{align*}
By definition we get $\mathcal P\subseteq \Omega \mathcal C$ and $\mathcal I\subseteq \Omega^- \mathcal C$. By Lemma \ref{5.0} we get that for any cotorsion pair $(\U,\V)$ on $\B$, $\Omega \U$ and $\Omega^- \V$ are closed under direct summands.

Let $\B_1$ $\B_2$ be two subcategories of $\B$, recall that $\B_1*\B_2$ is subcategory of $\B$ consisting of objects $X$ such that there exists a short exact sequence
$$B_1 \rightarrowtail X\twoheadrightarrow \B_2$$
where $B_1\in\B_1$ and $B_2\in\B_2$.

\begin{thm}\label{5.1}
If a twin cotorsion pair $(\s,\T),(\U,\V)$ satisfies
$$\U\subseteq \s * \T, \text{ } {\mathcal P}\subseteq \W \quad \text{or} \quad \T\subseteq \U * \V, \text{ } {\mathcal I}\subseteq \W$$
then $\underline \h$ becomes integral.
\end{thm}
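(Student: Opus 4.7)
The plan is to reduce to one case via $\B\leftrightarrow\B^{\op}$-duality and then upgrade the semi-abelian argument of Theorem~\ref{4.3}. The remark after Definition~\ref{ctp} shows that under the passage to $\B^{\op}$ the first hypothesis $\U\subseteq\s*\T$, $\mathcal P\subseteq\W$ is exchanged with the second $\T\subseteq\U*\V$, $\mathcal I\subseteq\W$. Since integrality is self-dual (pull-back in $\B^{\op}$ is push-out in $\B$, and left integrality in $\B^{\op}$ is right integrality in $\B$), it suffices to treat the first hypothesis and prove both left and right integrality of $\underline{\h}$ directly; the second hypothesis then follows by passing to $\B^{\op}$. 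So fix a twin cotorsion pair satisfying $\U\subseteq\s*\T$ and $\mathcal P\subseteq\W$.

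For left integrality, the strategy mirrors the proof of Theorem~\ref{4.3} but replaces the key Lemma~\ref{4.1} by its epimorphism-analogue. Given a pull-back square in $\underline{\h}$ with $\delta\colon C\to D$ epic, I would produce a representative of $\delta$ that is an inflation $d\colon\widetilde C\rightarrowtail D$ in $\B$, with $\widetilde C\simeq C$ in $\underline{\h}$ and with cokernel in $\U$. To do this, lift $\delta$ to some $d_0\colon C\to D$, use Lemma~\ref{epi}(b) to replace it by the inflation $\svecv{d_0}{-w^C}\colon C\rightarrowtail D\oplus W^C$, and invoke Proposition~\ref{PO}(d) on the push-out defining $C_{d_0}$ to see that the cokernel $E$ of this inflation sits in a short exact sequence $W^C\rightarrowtail C_{d_0}\twoheadrightarrow E$; here $C_{d_0}\in\U$ by Corollary~\ref{eq3} (since $\delta$ is epic) and $W^C\in\W$. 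Combining $\U\subseteq\s*\T$ with the projective presentations afforded by $\mathcal P\subseteq\W$, I would then verify $E\in\U$ via an $\Ext^1_\B(-,\V)$-vanishing argument patterned on the proof of Lemma~\ref{P1}(a). Once this representation is in hand, represent $\gamma$ by a deflation $c\colon B\twoheadrightarrow D$ (Lemma~\ref{epi}(a)), apply Proposition~\ref{7}(b) to the pair of short exact sequences $\Ker c\rightarrowtail B\twoheadrightarrow D$ and $\widetilde C\rightarrowtail D\twoheadrightarrow U$ to obtain
$$\xymatrix{X \ar@{ >->}[r]^{x_B} \ar@{->>}[d]_{x_C} & B \ar@{->>}[d]^c \ar@{->>}[r] & U \ar@{=}[d]\\ \widetilde C \ar@{ >->}[r]_d & D \ar@{->>}[r] & U,}$$
verify $X\in\B^-$ by splitting $U\in\s*\T$ as $S'\rightarrowtail U\twoheadrightarrow T'$, pulling this back along $X\rightarrowtail B\twoheadrightarrow U$, and then invoking Lemma~\ref{P1}(b) on the $\s$-piece together with a projective-presentation argument for the $T'$-piece, and finally apply Lemma~\ref{4.4} to conclude $\alpha$ is epic. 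Right integrality under the same hypothesis is handled by a parallel dual construction: produce a deflation representative of any monomorphism with kernel in $\V$, using $\mathcal P\subseteq\W$ and $\U\subseteq\s*\T$ on the $\T$-side of the twin pair, and then run the dual of the above diagram chase.

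The main obstacle is the step asserting $E\in\U$ (and, correspondingly, the verification $X\in\B^-$ at the end). The straightforward long exact sequence
$$\Hom_\B(W^C,\V)\to\Ext^1_\B(E,\V)\to\Ext^1_\B(C_{d_0},\V)=0$$
bounds $\Ext^1_\B(E,\V)$ only as a quotient of $\Hom_\B(W^C,\V)$, which is not a priori zero. The hypothesis $\mathcal P\subseteq\W$ is precisely what allows us to force this quotient to vanish: for every $V\in\V$ take a projective presentation $\Omega V\rightarrowtail P\twoheadrightarrow V$ with $P\in\mathcal P\subseteq\W$, and combine it with the $\s*\T$-decomposition of $C_{d_0}$ to lift an arbitrary morphism $W^C\to V$ along the inflation $W^C\rightarrowtail C_{d_0}$. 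Executing this diagram chase cleanly --- together with its symmetric counterpart used in the $X\in\B^-$ step and in right integrality --- is the technically most delicate part of the proof.
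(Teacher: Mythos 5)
Your high-level plan (find a commuting square together with an inflation $X\rightarrowtail B$ having cokernel in $\U$ and with $X\in\B^-$, then invoke Lemma~\ref{4.4}) is the right shape, but two of its load-bearing steps do not survive scrutiny, and your account of where the difficulty lies is off target.

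First, the cokernel of $\svecv{d_0}{-w^C}\colon C\rightarrowtail D\oplus W^C$ is already $C_{d_0}$ itself: by Proposition~\ref{PO}(b), the push-out defining $C_{d_0}$ is literally the short exact sequence $C\rightarrowtail D\oplus W^C\twoheadrightarrow C_{d_0}$, and $C_{d_0}\in\U$ holds outright by Corollary~\ref{eq3} since $\delta$ is epic. There is no separate object $E$ and no short exact sequence $W^C\rightarrowtail C_{d_0}\twoheadrightarrow E$, so the ``main obstacle'' you identify (forcing $\Ext^1_\B(E,\V)=0$) is not an obstacle at all. The genuine obstruction is exactly the one you acknowledge only in passing: verifying $X\in\B^-$. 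In the semi-abelian proof (Theorem~\ref{4.3}), the cokernel of $X\rightarrowtail B$ is $S\in\s$, so Lemma~\ref{P1}(b) hands you $X\in\B^-$ directly. Here the cokernel is only in $\U$, and Lemma~\ref{P1}(b) requires an $\s$-cokernel. Your proposed remedy --- split $U$ as $S'\rightarrowtail U\twoheadrightarrow T'$, get $X\rightarrowtail Y\twoheadrightarrow S'$ and $Y\rightarrowtail B\twoheadrightarrow T'$, handle the $\s$-piece via P1(b), and the $\T$-piece via projective presentations --- breaks at the $\T$-piece: from $Y\rightarrowtail B\twoheadrightarrow T'$ with $B\in\B^-$ and $T'\in\T$, taking a projective presentation $\Omega T'\rightarrowtail P'\twoheadrightarrow T'$ and lifting only yields $\Omega T'\rightarrowtail Y\oplus P'\twoheadrightarrow B$, which does not force $Y\in\B^-$ (one cannot conclude $\Ext^1(\Omega T',\V)=0$ nor $\Omega T'\in\T$ from the given hypotheses, and the chase stalls). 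This is precisely why the paper abandons the pull-back object $X$ entirely: it instead constructs $Q\in\s$ out of the $\s*\T$-decomposition of $C_d$ plus the $\T$-approximation property and projective lifts, so that $\Omega Q\in\B^-$ is automatic from $\mathcal P\subseteq\W$, and then produces by an elaborate diagram chase morphisms $\Omega Q\to B$, $\Omega Q\to C$ and a short exact sequence $\Omega Q\rightarrowtail B\oplus P_{S^B}\oplus P_Q\twoheadrightarrow M$ with $M\in\U$ to feed into Lemma~\ref{4.4}.

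Second, your reduction plan --- prove both left and right integrality under the single hypothesis $\U\subseteq\s*\T$, $\mathcal P\subseteq\W$ --- is unlikely to go through for the right-integral half: the dual of Lemma~\ref{4.4} needs an object in $\B^+$, and producing such an object wants $\mathcal I\subseteq\W$ and $\T\subseteq\U*\V$, not the first hypothesis. The paper short-circuits this by citing Rump's \cite[Proposition~6]{R}: in a semi-abelian category (which $\underline\h$ already is by Theorem~\ref{4.3}) left integral is equivalent to right integral. With that fact, only left integrality under the first hypothesis needs to be established; the second hypothesis is then handled by passing to $\B^{\op}$. Without Rump's proposition, your ``parallel dual construction'' for right integrality under the first hypothesis has no route to completion.
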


\begin{proof}
According to \cite[{Proposition 6}]{R}, a semi-abelian category is left integral if and only if it is right integral.
By duality, it suffices to show that $\U\subseteq \s * \T, {\mathcal P}\subseteq \W$ implies that $\underline \h$ is left integral.
Assume we are given a pull-back diagram
$$\xymatrix{
A \ar[r]^{\alpha} \ar[d]_{\beta} &B \ar[d]^{\gamma}\\
C \ar[r]_{\delta} &D}
$$
in $\underline \h$ where $\delta$ is an epimorphism. It is sufficient to show that $\alpha$ is epimorphic.\\
Let $d:C\rightarrow D$  and $c:B\rightarrow D$ be morphisms satisfying $\delta=\underline d$ and $\gamma=\underline c$. Since $\delta$ is epimorphic, if we take $c_d:D\rightarrow C_d$ as in Definition \ref{MF}
$$\xymatrix{
C \ar@{ >->}[r]^{w^C} \ar[d]_d  \ar@{}[dr]|{PO} &W^C \ar@{->>}[r] \ar[d] &S^C \ar@{=}[d]\\
D \ar@{ >->}[r]_{c_d} &C_d \ar@{->>}[r]_r &S^C}
$$
then $C_d\in \U$ by Corollary \ref{eq3}.
By assumption $\U\subseteq \s * \T$, $C_d$ admits a short exact sequence
$$\xymatrix{S_0 \ar@{ >->}[r]^{s_0} &C_d \ar@{->>}[r]^{t_0} &T_0}$$
with $S_0\in \s$, $T_0\in \T$.
Since $B\in \B^-$ admits a short exact sequence
$$B\rightarrowtail W^B\twoheadrightarrow S^B$$
and $S^B$ admits a short exact sequence
$$\xymatrix{\Omega S^B \ar[r]^p &P_{S^B} \ar@{->>}[r]^s  &S^B}$$
there exists a commutative diagram
\begin{equation}\label{F5}
$$\quad \quad \quad \quad \quad \quad \quad \quad \quad \quad \quad \quad \quad \quad \quad \quad \xymatrix{
\Omega S^B \ar[d]_{s_B} \ar@{ >->}[r]^p &P_{S^B} \ar@{->>}[r]^s \ar[d] &S^B \ar@{=}[d]\\
B \ar@{ >->}[r] &W^B \ar@{->>}[r] &S^B.
}$$
\end{equation}
As $\Ext^1_\B(S_B,T_0)=0$, $p$ is a left $\T$-approximation of $\Omega S^B$. Therefore there exists a morphism $f:P_{S^B}\rightarrow T_0$ such that $t_0c_dcs_B=fp$. As $P_{S^B}\in \mathcal P$, there is a morphism $h:P_{S^B}\rightarrow C_d$ such that $f=t_0h$. Since $t_0(c_dcs_B-hp)=0$, there exists a morphism $g:\Omega S_B\rightarrow S_0$ such that $c_dcs_B-hp=s_0g$. Then we get the following diagram
$$\xymatrix{
\Omega S^B \ar[d]_{s_B} \ar@{.>}[dr]^{g} \ar@{ >->}[rr]^p &&{P_{S^B}} \ar@{.>}@/^10pt/[dddl]^f \ar@{.>}@/^/[ddl]_h \ar@{->>}[r]^s &S^B\\
B \ar[d]_c &S_0 \ar@{ >->}[d]_{s_0}\\
D \ar[r]^{c_d} &C_d \ar@{->>}[d]_{t_0}\\
&T_0.}
$$
Take a push-out of $p$ and $g$, we get the following commutative diagram
$$\xymatrix{
\Omega S^B \ar@{ >->}[r]^{p} \ar[d]_{g} &{P_{S^B}} \ar@{->>}[r]^s \ar[d] &S^B \ar@{=}[d]\\
S_0 \ar@{ >->}[r] &Q \ar@{->>}[r] &S^B}
$$
and a short exact sequence
$$\xymatrix{\Omega S^B \ar@{ >->}[r]^-{\svecv{p}{-g}} &P_{S^B}\oplus S_0 \ar@{->>}[r] &Q}$$
by Proposition \ref{PO} where $Q\in \s$.
As $Q$ admits a short exact sequence
$$\xymatrix{\Omega Q \ar@{ >->}[r]^{k_Q} &P_Q \ar@{->>}[r]^{l_Q} &Q}$$
we get the following commutative diagram of short exact sequences
\begin{equation}\label{F6}
$$\quad \quad \quad \quad \quad \quad \quad \quad \quad \quad \quad \quad \quad \quad \quad \quad \xymatrix{
\Omega Q \ar[d]_{q_B} \ar@{ >->}[r]^{k_Q} &P_Q \ar@{->>}[r]^{l_Q} \ar[d] &Q \ar@{=}[d]\\
\Omega S^B \ar@{ >->}[r]_-{\svecv{p}{-g}} &{P_{S^B}\oplus S_0} \ar@{->>}[r] &Q.
}
$$
\end{equation}
Since $c_dcs_B=hp+s_0g$, we obtain the following commutative diagram of short exact sequences.
$$\xymatrix{
\Omega S^B \ar[d]_{cs_B} \ar@{ >->}[r]^-{\svecv{p}{-g}} &{P_{S^B}\oplus S_0} \ar@{->>}[r] \ar[d]^-{\svech{h}{-s_0}} &Q \ar[d]\\
D \ar@{ >->}[r]_{c_d} &C_d \ar@{->>}[r]_r &S^C}
$$
Thus we get the following commutative diagram
$$\xymatrix{
\Omega Q \ar[d]_{cs_Bq_B} \ar@{ >->}[r]^{k_Q} &P_Q \ar@{->>}[r]^{l_Q} \ar[d]^{n_Q} &Q \ar[d]\\
D \ar@{ >->}[r]_{c_d} &C_d \ar@{->>}[r]_r &S^C.}
$$
As $\mathcal P\subseteq \W$, we conclude that $\Omega Q\in \B^-$. Since $S^C$ admits a short exact sequence
$$\xymatrix{\Omega S^C \ar@{ >->}[r]^{k_{S^C}} &{P_{S^C}} \ar@{->>}[r]^{l_{S^C}} &S^C}$$
where $P_{S^C}\in \mathcal P$, hence we get the following commutative diagram of short exact sequence
$$\xymatrix{
\Omega S^C \ar[d]_{q_C} \ar@{ >->}[r]^{k_{S^C}} &{P_{S^C}} \ar@{->>}[r]^{l_{S^C}} \ar[d] &S^C \ar@{=}[d]\\
C \ar@{ >->}[r]^{w^C} \ar[d]_d  &W^C \ar@{->>}[r] \ar[d] &S^C \ar@{=}[d]\\
D \ar@{ >->}[r]_{c_d} &C_d \ar@{->>}[r]_r &S^C}
$$
which induces the following diagram
$$\xymatrix{
\Omega S^C \ar[d]_{dq_C} \ar@{ >->}[r]^{k_{S^C}} &{P_{S^C}} \ar@{->>}[r]^{l_{S^C}} \ar[d]^{n_{S^C}} &S^C \ar@{=}[d]\\
D \ar@{ >->}[r]_{c_d} &C_d \ar@{->>}[r]_r &S^C.}
$$
As $P_Q$ is projective, there exists a morphism $t:P_Q\rightarrow P_{S_C}$ such that $l_{S_C}t=rn_{Q}$.
$$\xymatrix{
&P_Q \ar[r]^{n_Q} \ar@{.>}[d]_t &C_d \ar[d]^r\\
\Omega S^C \ar@{ >->}[r]_{k_{S^C}}&P_{S^C} \ar@{->>}[r]_{l_{S^C}} &S^C}
$$
Now it follows that $l_{S^C}tk_Q=rn_{Q}k_Q=rc_dcs_Bq_B=0$, thus there exists a morphism $x:\Omega Q \rightarrow \Omega S_C$ such that $k_{S^C}x=tk_Q$.
$$\xymatrix{
\Omega Q \ar[r]^{k_Q} \ar@{.>}[d]_x &P_Q \ar[d]^t\\
\Omega S^C \ar@{ >->}[r]_{k_{S_C}} &P_{S^C} \ar@{->>}[r]_{l_{S^C}} &S^C.}
$$
As $rn_{S^C}t=l_{S^C}t=rn_Q$, there exists a morphism $y:P_Q\rightarrow D$ such that $n_{S^C}t-n_Q=c_dy$. Therefore
$$c_ddq_Cx=n_{S^C}k_{S^C}x=n_{S^C}tk_Q=(c_dy+n_Q)k_Q=c_d(yk_Q+cs_Bq_B).$$
Then $dq_Cx=yk_Q+cs_Bq_B$, since $c_d$ is monomorphic. Hence there exists a commutative diagram in $\uB$
$$\xymatrix{
\Omega Q \ar[r]^-{\underline {s_Bq_B}} \ar[d]_-{\underline {q_Cx}} &B \ar[d]^{\underline c}\\
C \ar[r]_{\underline d} &D.}
$$
By Proposition \ref{PO}, we get the following short exact sequences from \eqref{F5} and \eqref{F6}:
\begin{align*}
\xymatrix{\Omega Q \ar@{ >->}[r]^-{\svecv{q_B}{-k_Q}} &\Omega S_B\oplus P_Q \ar@{->>}[r] &P_{S_B}\oplus S_0,}\quad
\xymatrix{\Omega S^B \ar@{ >->}[r]^-{\svecv{s_B}{-p}} &B\oplus P_{S^B} \ar@{->>}[r] &W^B.}
\end{align*}
Then by Proposition \ref{7}, we get the following commutative diagram of short exact sequences
$$\xymatrix{
\Omega Q \ar@{ >->}[r]^-{\svecv{q_B}{-k_Q}} \ar@{=}[d] &{\Omega S^B\oplus P_Q} \ar@{->>}[r] \ar@{ >->}[d]^{\left(\begin{smallmatrix}
      s_B &0\\
      -p &0\\
      0 &1
    \end{smallmatrix}\right)} &{P_{S^B}\oplus S_0} \ar@{ >->}[d]\\
\Omega Q \ar@{ >->}[r]_-{\eta} &{B\oplus P_{S^B}\oplus P_Q} \ar@{->>}[r] \ar@{->>}[d] &M \ar@{->>}[d]\\
&W^B \ar@{=}[r] &W^B
}$$
where $\underline \eta=\underline {s_Bq_B}$. From the third column we get that $M\in \U$. By Lemma \ref{4.4}, we obtain that $\alpha$ is epimorphic.
\end{proof}

\section{The case where \underline{$\mathcal {H}$}$\text{ }$becomes almost abelian}

\begin{defn}\label{almost}
A preabelian category $\mathcal A$ is called \emph{left almost abelian} if in any pull-back diagram
$$\xymatrix{
A \ar[r]^{\alpha} \ar[d]_{\beta} &B \ar[d]^{\gamma}\\
C \ar[r]_{\delta} &D}
$$
in $\mathcal A$, $\alpha$ is a cokernel whenever $\delta$ is a cokernel. \emph{Right almost abelian} is defined dually. $\mathcal A$ is called \emph{almost abelian} if it is both left and right almost abelian.
\end{defn}

In this section we give a sufficient condition when $\underline \h$ becomes almost abelian.

We need the following proposition to show our result.

\begin{prop}\cite[{Proposition 2}]{R}\label{r}
Let $A\xrightarrow{f} B\xrightarrow{g} C$ be morphisms in a right (resp. left) semi-abelian category. If $f$ and $g$ are (co-)kernels, then $gf$ is a (co-)kernel. If $gf$ is a (co-)kernel, then $f$ (resp. $g$) is a (co-)kernel.
\end{prop}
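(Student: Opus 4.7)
The statement comes in two dual halves, and by self-duality of the hypotheses it suffices to treat the right semi-abelian/cokernel half: (i) a composite of two cokernels is a cokernel, and (ii) if $gf$ is a cokernel then the second factor is a cokernel. My approach rests on a single structural fact about a right semi-abelian preabelian category $\mathcal A$: every morphism $\phi$ admits a factorization $\phi = m_\phi\, p_\phi$ with $p_\phi = \Coker(\Ker\phi)$ a cokernel and $m_\phi$ a monomorphism, unique up to canonical isomorphism. Existence is equivalent to right semi-abelianness (the canonical comparison $\operatorname{coim}\phi \to \operatorname{im}\phi$ being monic); uniqueness follows because if $\phi = mp$ with $m$ monic and $p$ a cokernel, then $\Ker p = \Ker(mp) = \Ker\phi$, so $p = \Coker(\Ker p) = \Coker(\Ker\phi) = p_\phi$ canonically. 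Consequently $\phi$ is itself a cokernel if and only if $m_\phi$ is an isomorphism.

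For the composition statement, suppose $f = \Coker(f')$ and $g = \Coker(g')$, and factor $gf = mp$ with $m$ monic and $p$ a cokernel. From $ff' = 0$ we get $mpf' = 0$, hence $pf' = 0$ since $m$ is monic, so $p$ factors through the cokernel $f$ as $p = qf$. Then $gf = mqf$, and the epi property of $f$ gives $g = mq$. Applying the same device to $gg' = 0$ produces $r$ with $q = rg$, hence $g = mrg$, and then $mr = 1$ because $g$ is epic. Thus $m$ is a split epimorphism and a monomorphism, hence an isomorphism, so $gf$ is a cokernel.

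For the cancellation statement, factor $g = m_g\, p_g$ and then $p_g f = m'\, p'$ with $m_g, m'$ monic and $p_g, p'$ cokernels. The identity $gf = (m_g m')\, p'$ is another (monic)\,$\circ$\,(cokernel) factorization of $gf$, so uniqueness identifies $m_g m'$ canonically with $m_{gf}$. If $gf$ is a cokernel then $m_{gf}$, and hence $m_g m'$, is an isomorphism; combined with $m_g$ being monic this forces $m_g$ to be an isomorphism, so $g$ is isomorphic to the cokernel $p_g$ and is itself a cokernel. The kernel case in a left semi-abelian category is handled by passing to the opposite category. The main subtlety I anticipate is pinning down the uniqueness of the (monic)\,$\circ$\,(cokernel) factorization rigorously, since every cancellation above extracts $pf' = 0$ (or $qg' = 0$) from $mpf' = 0$ (or $mqg' = 0$) using precisely that $m$ is monic.
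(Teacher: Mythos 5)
The paper does not prove this proposition --- it is quoted from Rump with a citation --- so there is no in-paper argument to compare with. Your reconstruction via the canonical $(\text{mono})\circ(\text{cokernel})$ factorization is correct and is the standard route; the factorization claim you invoke is Rump's Proposition~1. The one genuine slip is a systematic left/right swap: by the paper's Definition~\ref{semi} (pullbacks, cokernels, epimorphisms on the \emph{left} side), it is \emph{left} semi-abelianness, not right, that is equivalent to the canonical comparison $\operatorname{coim}\phi\to\operatorname{im}\phi$ being monic, i.e.\ to the existence of the factorization $\phi=m_\phi p_\phi$ with $p_\phi$ a cokernel and $m_\phi$ monic. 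Consequently the half you actually treat is the left semi-abelian / cokernel / ``$g$ is a cokernel'' half, and the right semi-abelian / kernel / ``$f$ is a kernel'' half is its opposite-category dual. The internal mathematics is fine; just make the cancellation step ``$m_gm'$ iso and $m_g$ monic forces $m_g$ iso'' as explicit as in your first half: $m'(m_gm')^{-1}$ is a right inverse of $m_g$, so $m_g$ is a split epimorphism as well as a monomorphism, hence an isomorphism. In short, this is a labeling error rather than a mathematical gap, but it must be corrected, since which of $f$ or $g$ is forced to be a (co)kernel is precisely the content the proposition is tracking and is what gets used later (e.g.\ in Lemma~\ref{8.1}).
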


Use this proposition, we can prove the following lemma, which is an analogue of Lemma \ref{4.4}.

\begin{lem}\label{8.1}
Let
$$\xymatrix{
A \ar[r]^{\alpha} \ar[d]_{\beta} &B \ar[d]^{\gamma}\\
C \ar[r]_{\delta} &D}
$$
be a pull-back diagram in $\underline \h$. Let $X\in \B^-$ and $x_B:X\rightarrow B$, $x_C:X\rightarrow C$ be morphisms which satisfy that $\underline {x_B}$ is a cokernel in the following commutative diagram
$$\xymatrix{
X \ar[r]^{\underline {x_B}} \ar[d]_{\underline {x_C}} &B \ar[d]^{\gamma}\\
C \ar[r]_{\delta} &D.
}
$$
Then if $\U\subseteq \T$, we obtain $\alpha$ is a cokernel in $\underline \h$.
\end{lem}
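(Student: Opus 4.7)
The plan is to mirror the proof of Lemma \ref{4.4}, replacing its epimorphism argument with a cokernel argument via Proposition \ref{r}. The key observation I would exploit is that the hypothesis $\U \subseteq \T$ collapses $\W$ to $\U$: for every $B \in \B$ the short exact sequence $V_B \rightarrowtail U_B \twoheadrightarrow B$ coming from the cotorsion pair $(\U,\V)$ has $U_B \in \U = \W$ and $V_B \in \V$, witnessing $B \in \B^+$ by Definition \ref{5}(a). Thus $\B = \B^+$, so $\h = \B^+ \cap \B^- = \B^-$ and in particular $X \in \h$; the commutative square in the statement is then a genuine square in $\underline{\h}$.

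With this in hand, I would apply the universal property of the pullback $A$ in $\underline{\h}$ to the commuting square $\gamma \underline{x_B} = \delta \underline{x_C}$: it produces a unique morphism $\eta \colon X \to A$ in $\underline{\h}$ such that $\alpha \eta = \underline{x_B}$ and $\beta \eta = \underline{x_C}$. Then I would invoke Proposition \ref{r} applied to the composition $X \xrightarrow{\eta} A \xrightarrow{\alpha} B$ inside the semi-abelian category $\underline{\h}$ (Theorem \ref{4.3}): the composite $\alpha \eta = \underline{x_B}$ is a cokernel by hypothesis, so $\alpha$ itself must be a cokernel, as required.

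The main obstacle I anticipate is simply pinning down the correct interpretation of the hypothesis. Once one notices that $\U \subseteq \T$ forces $X$ to lie in $\h$, no analogue of the $X^+$-lift used in the proof of Lemma \ref{4.4} is required, and the remaining argument reduces to a formal invocation of the pullback universal property together with the cancellation property for cokernels in a semi-abelian category. This explains why Lemma \ref{8.1} can be established much more concisely than Lemma \ref{4.4}, where one had to pass through the construction of $X^+$ in order to land in $\h$.
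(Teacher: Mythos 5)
Your proof is correct, and it is genuinely streamlined relative to the paper's argument. The paper's proof of Lemma \ref{8.1} copies the template of Lemma \ref{4.4}: it notes $\h=\B^-$, then nonetheless constructs $x^+:X\to X^+$ as in Definition \ref{re}, lifts $x_B,x_C$ through $X^+$ via Proposition \ref{8} to obtain $f_B,f_C$, applies Proposition \ref{r} once to pass from ``$\underline{f_Bx^+}=\underline{x_B}$ is a cokernel'' to ``$\underline{f_B}$ is a cokernel,'' factors $\underline{f_B}$ through the pullback to get $\eta:X^+\to A$, and applies Proposition \ref{r} a second time. You observed that $\U\subseteq\T$ forces $\B^+=\B$ (every $U_B$ lies in $\U=\W$) so $\h=\B^-$ and hence $X$ is already in $\h$; the given square is then already a square in $\underline\h$, the pullback universal property yields $\eta:X\to A$ directly with $\alpha\eta=\underline{x_B}$, and a single application of Proposition \ref{r} finishes. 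The $X^+$ intermediary is vestigial here: in Lemma \ref{4.4} it was genuinely needed to move from $\B^-$ into $\h$, but under $\U\subseteq\T$ that gap closes. Your version buys a shorter and more transparent argument; the only (cosmetic) thing worth making explicit is that the hypothesis ``$\underline{x_B}$ is a cokernel'' is to be read in $\underline\h$, which is exactly why the reduction $X\in\h$ matters.
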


\begin{proof}
Since $\U\subseteq \T$, we get $\h=\B^-$. Take $x^+:X\rightarrow X^+$ as in Definition \ref{re}. Then by Proposition \ref{8}, there exist $f_B:X^+\rightarrow B$ and $f_C:X^+\rightarrow C$ such that $\underline {f_Bx^+}=\underline {x_B}$ and $\underline {f_Cx^+}=\underline {x_C}$. Since $\underline {x_B}$ is a cokernel, by Proposition \ref{r}, $\underline {f_B}$ is also a cokernel in $\underline \h$. As $\gamma\underline {x_B}=\delta\underline {x_C}$, it follows by Proposition \ref{8} that $\gamma\underline {f_B}=\delta\underline {f_C}$. By the definition of pull-back, there exists a morphism $\eta:X^+\rightarrow A$ in $\underline \h$ which makes the following diagram commute.
$$\xymatrix{
X^+ \ar@/^/[drr]^{\underline {f_B}} \ar@/_/[ddr]_{\underline {f_C}} \ar@{.>}[dr]^{\eta}\\
&A \ar[r]^{\alpha} \ar[d]^{\beta} &B \ar[d]^{\gamma}\\
&C \ar[r]_{\delta} &D}
$$
Since $\underline {f_B}$ is a cokernel, we obtain that $\alpha$ is also a cokernel by Proposition \ref{r}.
\end{proof}

\begin{thm}\label{al}
Let $(\s,\T),(\U,\V)$ be a twin cotorsion pair on $\B$ satisfying
$$\U\subseteq \T \text{ or }\T\subseteq \U$$
then $\underline \h$ is almost abelian.
\end{thm}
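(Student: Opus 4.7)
The plan is to reduce to the case $\U\subseteq\T$ via Remark 2.5 on $\B^{op}$ and the self-duality of the notion ``almost abelian''. Under $\U\subseteq\T$ we have $\W=\T\cap\U=\U$, $\s\cup\mathcal P\subseteq\W$, and $\B^+=\B$, so $\h=\B^-$. This asymmetric collapse of $\B^+$ is what I will exploit, and it is what makes the left side of ``almost abelian'' accessible via Lemma \ref{8.1}.

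For left almost abelianness I would start from a pullback in $\underline\h$ whose bottom edge $\delta$ is a cokernel. Using Lemma \ref{4.1} and Lemma \ref{epi}(a) (valid since $D\in\B^+=\B$), I replace the picture up to iso in $\underline\h$ so that $\delta=\underline d$ for an inflation $d:C\rightarrowtail D$ with cokernel $S\in\s$, and $\gamma=\underline c$ for a deflation $c:B\twoheadrightarrow D$. The pullback $X$ of $c$ and $d$ in $\B$ fits in an SES $X\rightarrowtail B\twoheadrightarrow S$ and lies in $\B^-=\h$ by Lemma \ref{P1}(b). The key step is to verify that $\underline{x_B}$ is a cokernel in $\underline\h$, for then Lemma \ref{8.1} concludes that $\alpha$ is a cokernel. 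To see this, I fix a projective presentation $p:\Omega S\rightarrowtail P_S\twoheadrightarrow S$ with $P_S\in\mathcal P\subseteq\W$; Lemma \ref{P1}(b) then gives $\Omega S\in\h$. Projectivity of $P_S$ lets me lift $P_S\twoheadrightarrow S$ through the deflation $q:B\twoheadrightarrow S$ to some $h:P_S\to B$, and the kernel property $x_B=\ker q$ produces $g:\Omega S\to X$ with $x_Bg=hp$. The pushout $C_g$ of $(p,g)$ inherits an SES $X\rightarrowtail C_g\twoheadrightarrow S$, and the universal morphism $C_g\to B$ induced by $(x_B,h)$ is an isomorphism of SESs. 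Combined with the iso $\underline{b^+}$ of Proposition \ref{8}, this identifies the cokernel $\underline{{c_g}^+c_g}$ of $\underline g$ supplied by Theorem \ref{15} with $\underline{x_B}$, proving the claim.

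The main obstacle is right almost abelianness under the asymmetric hypothesis $\U\subseteq\T$, since the naive dualization of the argument above would require $\mathcal I\subseteq\W$, i.e. $\T\subseteq\U$. My plan is to establish a one-sided dual of Lemma \ref{8.1} tailored to $\U\subseteq\T$: given a pushout in $\underline\h$ with $\alpha$ a kernel, use Lemma \ref{epi}(b) to replace $\alpha$ by an inflation in $\B$, form the pushout in $\B$ using axiom (c) (pushouts along inflations are available), and exploit $\B^+=\B$ to identify this $\B$-pushout with the $\underline\h$-pushout up to iso. The $\Omega S$-style lift has to be rebuilt on the kernel side using the left $\V$-approximation structure of the twin cotorsion pair in place of injectives; this asymmetric reconstruction, carried out without $\mathcal I\subseteq\W$, is the technically delicate point where one must check that the resulting morphism genuinely witnesses $\delta$ as a kernel in $\underline\h$.
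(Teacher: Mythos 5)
Your argument for \emph{left} almost abelianness under $\U\subseteq\T$ is correct and differs from the paper in an interesting way: the paper shows $\underline{x_B}$ is the cokernel of $\underline{k_{x_B}}$, where $K_{x_B}$ comes from the pullback of $x_B$ against a right $\W$-approximation $w_B:W_B\twoheadrightarrow B$ (Definition \ref{13}); you instead exhibit $\underline{x_B}$ as the cokernel of $\underline g$ for a morphism $g:\Omega S\to X$ built from the projective cover of $S$, and then use that the pushout $C_g$ of $(p,g)$ is canonically isomorphic, as an extension of $S$ by $X$, to $B$ itself. Both identify $\underline{x_B}$ as a cokernel and then invoke Lemma \ref{8.1}. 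Your choice exploits $\mathcal P\subseteq\W$ more aggressively; the paper's choice is more symmetric and does not privilege projectives. Each is a legitimate route to the left half.

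The genuine gap is the \emph{right} almost abelian half under $\U\subseteq\T$. You correctly observe that the naive dualization would need $\mathcal I\subseteq\W$, which is false here, and you sketch a plan involving a one-sided dual of Lemma \ref{8.1} plus an asymmetric rebuild of the $\Omega S$-lift; but you explicitly leave this as a ``technically delicate point'' without completing it, so the proof as written does not establish right almost abelianness. This step does not need to be fought head on. The resolution the paper uses is the general fact (\cite[Proposition~3]{R}) that a semi-abelian category is left almost abelian if and only if it is right almost abelian. Since $\underline\h$ is already known to be semi-abelian by Theorem~\ref{4.3}, left almost abelian $\Rightarrow$ right almost abelian $\Rightarrow$ almost abelian, and the asymmetry you worried about dissolves. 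With that one citation added, your left argument suffices and the overall structure (including the $\B^{\op}$ reduction from $\T\subseteq\U$ to $\U\subseteq\T$) closes.
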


\begin{proof}
By \cite[{Proposition 3}]{R}, a semi-abelian category is left almost abelian if and only if it is right almost abelian. By duality, it is enough to show that $\U\subseteq \T$ implies $\underline \h$ is left almost abelian.\\
Assume we are given a pull-back diagram
$$\xymatrix{
A \ar[r]^{\alpha} \ar[d]_{\beta} &B \ar[d]^{\gamma}\\
C \ar[r]_{\delta} &D}
$$
in $\underline \h$ where $\delta$ is a cokernel. It suffices to show that $\alpha$ becomes a cokernel.\\
Repeat the same argument as in Theorem \ref{4.3}, we get the following diagram
$$\xymatrix{
X \ar@{ >->}[r]^{x_B} \ar@{.>}[d]_{x_C} &B \ar@{.>}[d]^c \ar@{->>}[r] &S \ar@{=}[d]\\
C \ar@{ >->}[r]_d &D \ar@{->>}[r] &S}
$$
where $X\in \B^-$, $\underline d=\delta$ and $\underline c=\gamma$. According to Lemma \ref{8.1}, it suffices to show that $\underline {x_B}$ is a cokernel in $\underline \h$.\\
By Definition \ref{13} and Proposition \ref{7}, we get the following commutative diagram
$$\xymatrix{
V_B \ar@{ >->}[r] \ar@{=}[d] &K_{x_B} \ar@{ >->}[d]^a \ar@{->>}[r]^{k_{x_B}} &X \ar@{ >->}[d]^{x_B}\\
V_B \ar@{ >->}[r] &W_B \ar@{->>}[r] \ar@{->>}[d] &B \ar@{->>}[d]\\
&S \ar@{=}[r] &S.
}
$$
It follows that $K_{x_B}\in \B^-=\h$ and $\underline {k_{x_B}x_B}=0$. Now let $r:X\rightarrow Q$ be any morphism in $\h$ such that $\underline {rk_{x_B}}=0$, then $rk_{x_B}$ factors through $\W$. Since $\Ext^1_\B(\s,\T)=0$, $a$ is a left $\T$-approximation of $K_{x_B}$, thus there exists a morphism $b:W_B\rightarrow Q$ such that $ab=rk_B$. By the definition of push-out, we get the following commutative diagram
$$\xymatrix{
K_{x_B} \ar[d]_a \ar[r]^{k_{x_B}} &X \ar[d]_{x_B} \ar@/^/[ddr]^r\\
W_B \ar[r] \ar@/_/[drr]_b &B \ar@{.>}[dr]\\
&&Q.
}
$$
Since $\underline {x_B}$ is epimorphic in $\underline \h$ by Proposition \ref{eq2}, the above diagram implies that $\underline {x_B}$ is the cokernel of $\underline {k_{x_B}}$.
\end{proof}

By Theorem \ref{5.1}, in the case of the above theorem, the heart $\underline \h$ also becomes integral. Then by \cite[{Theorem 2}]{R}, $\underline \h$ is equivalent to a torsionfree class of a hereditary torsion theory in an abelian category induced by $\underline \h$. For more details, one can see \cite[{\S 4}]{R}.

\section{Existence of enough projectives/injectives}

We call an object $P\in \underline \h$ (proper-)projective if for any epimorphism (resp. cokernel) $\alpha:X\rightarrow Y$ in $\underline \h$, there exists an exact sequence
$$\Hom_{\underline \h}(P,X)\xrightarrow{\Hom_{\underline \h}(P,\alpha)} \Hom_{\underline \h}(P,Y)\rightarrow 0.$$
An (proper-)injective object is defined dually.\\
$\underline \h$ is said to have enough projectives if for any object $X\in \underline \h$, there is a cokernel $\delta:P\rightarrow X$ such that $P$ is proper-projective. Having enough injectives is defined dually.\\

In this section we give sufficient conditions that the heart $\underline \h$ of a twin cotorsion pair has enough projectives and has enough injectives.

\begin{lem}\label{6.1}
If a twin cotorsion pair $(\s,\T),(\U,\V)$ satisfies $\U\subseteq \T$, then we have $\Omega \s \subseteq \h$.
\end{lem}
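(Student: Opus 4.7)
The plan is very direct, exploiting the fact that the assumption $\U\subseteq \T$ collapses much of the twin-cotorsion-pair data: it forces $\U\subseteq \U\cap\T=\W$, and in particular $\mathcal P\subseteq \U\subseteq \W$ by Lemma \ref{3}(d). Take any object $\Omega S\in \Omega\s$, so by definition there is a short exact sequence
$$\Omega S\rightarrowtail P\twoheadrightarrow S$$
with $P\in \mathcal P$ and $S\in \s$. I need to show $\Omega S\in \B^+\cap \B^-$.

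For membership in $\B^-$, I would simply reuse this defining sequence: since $P\in \mathcal P\subseteq \W$ by the observation above, and $S\in \s$, the sequence $\Omega S\rightarrowtail P\twoheadrightarrow S$ is already of the shape required by Definition \ref{5}(b), so $\Omega S\in \B^-$ with no further work.

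For membership in $\B^+$, I would invoke the cotorsion pair $(\U,\V)$ itself: by Definition \ref{2}(b) applied to $\Omega S\in \B$, there exists a short exact sequence
$$V_{\Omega S}\rightarrowtail U_{\Omega S}\twoheadrightarrow \Omega S$$
with $U_{\Omega S}\in \U$ and $V_{\Omega S}\in \V$. Using $\U\subseteq \T$ once more gives $U_{\Omega S}\in \U\cap\T=\W$, so this sequence matches the shape demanded by Definition \ref{5}(a), yielding $\Omega S\in \B^+$. Combining the two conclusions gives $\Omega S\in \h$.

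There is really no obstacle here: the proof is a two-line bookkeeping argument that $\U\subseteq \T$ automatically promotes the conflations producing $\Omega\s$ and the standard $(\U,\V)$-resolutions to $\W$-conflations. The main role of this lemma is preparatory, and I expect it will be used in the next section to guarantee that objects of $\Omega \s$ can be regarded as objects of the heart in the description of $(\B/\T)_R$.
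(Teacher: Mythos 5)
Your proof is correct and follows essentially the same route as the paper: you use $\mathcal P\subseteq\U=\W$ to read off $\Omega\s\subseteq\B^-$ from the defining conflation, and then the standard $(\U,\V)$-resolution lands in $\W$ because $\U\subseteq\T$, giving membership in $\B^+$. The paper phrases the second step globally as $\U\subseteq\T\Rightarrow\B^+=\B$, while you apply the identical observation pointwise to $\Omega S$; these are the same argument.
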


\begin{proof}
We first have $\mathcal P \subseteq \U = \W$, then by definition $\Omega \s \subseteq \B^-$. But we observe that $\U \subseteq \T$ implies $\B^+=\B$, hence $\Omega \s \subseteq \h$.
\end{proof}

\begin{prop}\label{6.3}
Let $(\s,\T),(\U,\V)$ be a twin cotorsion pair satisfying $\U\subseteq \T$, then any object in $\Omega \s$ is projective in $\underline \h$.
\end{prop}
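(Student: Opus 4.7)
Plan:

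My plan is to construct, for any epimorphism $\underline\alpha:X\to Y$ in $\underline\h$ and any $\underline y:\Omega S\to Y$, an explicit lift $x:\Omega S\to X$ with $\underline{\alpha x}=\underline y$, by combining the pullback $X\times_Y\Omega S$ with a pushout along $K\rightarrowtail W^X$ and then exploiting the projectivity of $P$.

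By Lemma~\ref{epi}(a), which applies because $\U\subseteq\T$ gives $\B^+=\B$, I may replace $\alpha$ by a deflation in $\B$; by Corollary~\ref{eq3} together with $\W=\U$, this deflation satisfies $C_\alpha\in\W$, and Lemma~\ref{17} (diagram~\eqref{F4}) supplies the short exact sequence $K\rightarrowtail W^X\twoheadrightarrow C_\alpha$ with both middle and right objects in $\W$. Form $Z:=X\times_Y\Omega S$, giving $K\rightarrowtail Z\xrightarrow{\alpha'}\Omega S$ with pullback projection $\beta':Z\to X$, and push this sequence out along $K\rightarrowtail W^X$. By Proposition~\ref{PO}, this produces two short exact sequences
$$W^X\;\overset{\nu}{\rightarrowtail}\;Z'\;\overset{\alpha''}{\twoheadrightarrow}\;\Omega S\qquad\text{and}\qquad Z\;\overset{\mu}{\rightarrowtail}\;Z'\;\overset{\pi}{\twoheadrightarrow}\;C_\alpha.$$
The cocone $(Z\xrightarrow{w^X\beta'}W^X,\,W^X\xrightarrow{\mathrm{id}}W^X)$ agrees on $K$, so the universal property of the pushout yields a retraction $\rho:Z'\to W^X$ of $\nu$; hence the first sequence splits, and I choose a section $s:\Omega S\to Z'$ of $\alpha''$ with $\rho s=0$.

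The main obstacle is that $s$ need not take values in $Z=\ker\pi$. A short computation using $\phi w^X=c_\alpha\alpha$ from diagram~\eqref{F4} shows $\pi s=-c_\alpha y$, so $s$ misses $Z$ precisely by $c_\alpha y$. This is where $\U\subseteq\T$ is essential: since $S\in\s$ and $C_\alpha\in\T$, we have $\Ext^1_\B(S,C_\alpha)=0$, so $c_\alpha y$ extends to some $\tilde f:P\to C_\alpha$; and since $\mathcal P\subseteq\U=\W$, the object $P$ is projective in $\B$, so $\tilde f$ lifts along the deflation $\pi$ to $\tilde r:P\to Z'$ with $\pi\tilde r=\tilde f$. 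Setting $r:=\tilde r\circ(\Omega S\rightarrowtail P)$ gives $\pi r=c_\alpha y$, whence $\pi(s+r)=0$ and $s+r$ factors uniquely as $\mu\bar s''$ for some $\bar s'':\Omega S\to Z$.

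Finally, $\alpha'\bar s''=\alpha''\mu\bar s''=\alpha''s+\alpha''r=\mathrm{id}_{\Omega S}+\alpha''r$; but $\alpha''r$ factors through $P\in\W$, so $\underline{\alpha'\bar s''}=\mathrm{id}$ in $\underline\h$. Then $x:=\beta'\bar s'':\Omega S\to X$ gives $\underline{\alpha x}=\underline y$ by pullback commutativity, as desired.
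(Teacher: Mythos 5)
Your proof is correct, and it rests on the same three ingredients as the paper's: the reduction to a deflation via Lemma~\ref{epi}, the fact that $C_\alpha\in\U=\W$ (equivalently $\underline{c_\alpha}=0$) for an epimorphism, and the combination of the left $\T$-approximation $\Omega S\rightarrowtail P$ with the projectivity of $P$. The difference is in how the lift is extracted. The paper applies Proposition~\ref{7}(b) to $K\rightarrowtail X\twoheadrightarrow Y$ and $X\rightarrowtail W^X\twoheadrightarrow S^X$, obtaining the $3\times3$ diagram whose upper-right square $X,Y,W^X,C_\alpha$ is simultaneously a pushout and a pullback; after factoring $c_\alpha y$ through $P$ (by the $\T$-approximation and $\W\subseteq\T$) and lifting to $W^X$ (by projectivity of $P$), the pullback property of that square yields a \emph{strict} lift $h$ with $\alpha h=y$. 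You instead introduce two auxiliary objects: the pullback $Z=X\times_Y\Omega S$ and its pushout $Z'$ along $K\rightarrowtail W^X$; you observe that the cocone $(w^X\beta',\mathrm{id}_{W^X})$ splits $W^X\rightarrowtail Z'\twoheadrightarrow\Omega S$, then correct the section by a term $r$ which vanishes modulo $\W$ to push it into $Z$, and finally compose with $\beta'$ to get a lift only up to $\W$. Both routes are valid; the paper's is shorter because the pullback in the $3\times3$ square does in one step what your pushout–splitting–correction sequence does in several. Your version does have the merit of making explicit where the identity $\Ext^1_\B(S,C_\alpha)=0$ enters (via $C_\alpha\in\W\subseteq\T$), whereas the paper routes the same fact indirectly through $\underline{q}=0$.

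One small citation point: diagram~\eqref{F4} appears inside Lemma~\ref{17}, which is stated for the single cotorsion pair $(\U,\V)=(\s,\T)$; it would be cleaner to cite Definition~\ref{MF} together with Proposition~\ref{7}(b), which is what actually produces the column $K\rightarrowtail W^X\twoheadrightarrow C_\alpha$ for a general twin cotorsion pair.
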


\begin{proof}
Let $B$ and $C$ be any objects in $\h$ and let $p:\Omega S\rightarrow C$ be any morphism.\\
Let $\underline g:B\rightarrow C$ be a morphism which is epimorphic in $\underline \h$, by Lemma \ref{epi} we can assume that it admits a short exact sequence
$$\xymatrix{A \ar@{ >->}[r]^{f} &B \ar@{->>}[r]^{g} &C.}$$
Since $B\in \h$ admits a short exact sequence $B\rightarrowtail W^B\twoheadrightarrow S^B$, then according to Proposition \ref{7}, there exists a commutative diagram
$$\xymatrix{
A \ar@{ >->}[r]^f \ar@{=}[d] &B \ar@{->>}[r]^g \ar[d] &C \ar[d]^q\\
A \ar@{ >->}[r] &W^B \ar@{->>}[r]^r \ar@{->>}[d] &D \ar@{->>}[d]\\
&S^B \ar@{=}[r] &S^B.
}
$$
By Lemma \ref{P1}, we obtain $D\in \B^-=\h$. Since $\underline {qg}=0$ and $\underline g$ is epimorphic in $\underline \h$, we have $\underline q=0$. By definition $\Omega S$ admits a short exact sequence
$$\xymatrix{\Omega S \ar@{ >->}[r]^{a} &P \ar@{->>}[r] &S} \text{ }(P\in \mathcal P, S\in \s).$$
Since $\underline {qp}=0$, $qp$ factors through $\W$. As $\Ext^1_\B(\s,\T)=0$, $a$ is a left $\T$-approximation of $\Omega S$. Thus there exists a morphism $s:P\rightarrow D$ such that $qp=sa$. Since $P$ is projective, there exists a morphism $t:P\rightarrow W^B$ such that $s=rt$. Hence by the definition of pull-back, we get the following commutative diagram
$$\xymatrix{
\Omega S \ar@/^/[drr]^p \ar@{.>}[dr]^h \ar@/_/[ddr]_{ta}\\
&B \ar[r]_g \ar[d] &C \ar[d]^q\\
&W^B \ar[r]_r &D}
$$
which implies that $\Omega S$ is projective in $\underline \h$.
\end{proof}

\begin{prop}\label{6.4}
Let $(\s,\T),(\U,\V)$ be a twin cotorsion pair satisfying $\U\subseteq \T$, then any object $\B\in \h$ admits an epimorphism $\alpha:\Omega S\rightarrow B$ in $\underline \h$.
\end{prop}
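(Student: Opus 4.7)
The plan is to take the defining short exact sequence for $B \in \h$, compare it with the projective cover of its $\s$-term, and then read off the epimorphism from this comparison. Concretely, since $\U \subseteq \T$ forces $\B^+ = \B$ and hence $\h = \B^-$, every $B \in \h$ admits a short exact sequence
$$\xymatrix{B \ar@{ >->}[r]^b &W^B \ar@{->>}[r] &S^B}$$
with $W^B \in \W$ and $S^B \in \s$. Take the projective presentation $\Omega S^B \rightarrowtail P \twoheadrightarrow S^B$ with $P \in \mathcal P$ (which exists because $\B$ has enough projectives, and $\Omega S^B \in \h$ by Lemma \ref{6.1}).

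Since $P$ is projective, the deflation $W^B \twoheadrightarrow S^B$ admits a lift $\beta : P \to W^B$, producing a morphism of short exact sequences
$$\xymatrix{
\Omega S^B \ar[d]_{\alpha} \ar@{ >->}[r] &P \ar[d]^{\beta} \ar@{->>}[r] &S^B \ar@{=}[d]\\
B \ar@{ >->}[r]_{b} &W^B \ar@{->>}[r] &S^B
}$$
where $\alpha : \Omega S^B \to B$ is the morphism induced on kernels. I would take this $\alpha$ (with $S = S^B$) as the candidate epimorphism.

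To verify that $\underline \alpha$ is epimorphic in $\underline \h$, I would apply Corollary \ref{eq3} and show $C_\alpha \in \U$. The construction of $c_\alpha$ in Definition \ref{MF} uses the short exact sequence $\Omega S^B \rightarrowtail P \twoheadrightarrow S^B$ as the distinguished $w^{\Omega S^B}$ (since $P \in \mathcal P \subseteq \W$ and $S^B \in \s$), so $C_\alpha$ is literally the push-out of $\alpha$ along this inflation. But the displayed diagram above already realises a commutative square with that same inflation on top and $\alpha$ on the left, whose induced map on cokernels is $\mathrm{id}_{S^B}$; by Proposition \ref{PO}(d) this left square is itself a push-out. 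Uniqueness of push-outs then identifies $C_\alpha \simeq W^B \in \W \subseteq \U$, and Corollary \ref{eq3} gives that $\underline \alpha$ is epimorphic in $\underline \h$.

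The only slightly delicate step is the identification $C_\alpha \simeq W^B$, which rests on recognising the left square of the comparison of short exact sequences as a push-out; once Proposition \ref{PO}(d) is invoked this is immediate, so there is essentially no obstacle beyond assembling the ingredients supplied by Lemma \ref{6.1}, Proposition \ref{PO}, Definition \ref{MF} and Corollary \ref{eq3}.
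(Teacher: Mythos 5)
Your candidate morphism is the same one the paper uses (the map $s_B\colon \Omega S^B \to B$ induced by lifting a projective cover of $S^B$ against the deflation $W^B \twoheadrightarrow S^B$), and both arguments exploit that the left square of the comparison of short exact sequences is a push-out via Proposition~\ref{PO}(d). Where you diverge is in the verification of epimorphicity: you route it through Corollary~\ref{eq3}, observing that the push-out defining $C_\alpha$ in Definition~\ref{MF} (with $w^{\Omega S^B} = p$, which is legitimate since $\mathcal{P} \subseteq \W$ once $\U \subseteq \T$) is, by uniqueness of push-outs, the same square, so $C_\alpha \cong W^B \in \W \subseteq \U$. The paper instead verifies epimorphicity by hand: given $c\colon B \to C$ in $\h$ with $\underline{cs_B} = 0$, it uses that $p$ is a left $\T$-approximation of $\Omega S^B$ to write $cs_B = dp$ and then the universal property of the push-out to factor $c$ through $W^B \in \W$, forcing $\underline{c} = 0$. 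Your route is a bit more conceptual and avoids re-deriving what Corollary~\ref{eq3} already packages, at the modest cost of carefully justifying that this particular short exact sequence is an allowed input to Definition~\ref{MF} and that the answer is independent of the choice (which it is, since epimorphicity and membership of $C_f$ in $\U$ are both equivalent to the intrinsic condition $C_f^+ \in \W$). Both proofs are correct.
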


\begin{proof}
Let $B$ be any object in $\h$, consider commutative diagram \eqref{F5}. By Proposition \ref{PO}, the left square is a push-out. Now it suffices to show $\underline {s_B}$ is epimorphic in $\underline \h$.\\
Let $c:B\rightarrow C$ be any morphism in $\h$ such that $\underline {cs_B}=0$, then $cs_B$ factors through $\W$. Since $p$ is a left $\T$-approximation of $\Omega S$, there exits a morphism $d:P_{S^B}\rightarrow C$ such that $cs_B=dp$. Thus by the definition of push-out we have a commutative diagram
$$\xymatrix{
\Omega S^B \ar[r]^p \ar[d]_{s_B} &P_{S^B} \ar[d] \ar@/^/[ddr]^d\\
B \ar[r] \ar@/_/[drr]_c &W^B \ar@{.>}[dr]\\
&&C}
$$
which implies $\underline c=0$. Hence $\underline {s_B}$ is epimorphic in $\underline \h$.
\end{proof}

Moreover, we have

\begin{prop}\label{6.12}
Let $(\s,\T),(\U,\V)$ be a twin cotorsion pair satisfying $\U\subseteq \T$, then an object $B$ is projective in $\underline \h$ implies that $B\in \underline {\Omega \s}$.
\end{prop}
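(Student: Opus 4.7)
The plan is to exploit projectivity of $B$ to split the canonical epimorphism $\underline{s_B}:\Omega S^B\to B$ produced by Proposition \ref{6.4}, and then transport the resulting $\underline\h$-splitting into a direct-sum decomposition in $\B$ by means of the Krull--Schmidt property. Explicitly, Proposition \ref{6.4} gives $\underline{s_B}$ epimorphic in $\underline\h$, where $\Omega S^B$ fits into a short exact sequence $\Omega S^B\rightarrowtail P_{S^B}\twoheadrightarrow S^B$ with $S^B\in\s$. Since $B$ is projective in $\underline\h$, applying projectivity to $\underline{s_B}$ and $\underline{\id}_B$ yields $\underline h:B\to\Omega S^B$ with $\underline{s_B}\,\underline h=\underline{\id}_B$, so that $B$ is a direct summand of $\Omega S^B$ in $\uB$.

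Next I would use that $\uB=\B/\W$ is still Krull--Schmidt. Indeed, for $X$ indecomposable in $\B$, $\End_\B(X)$ is local, so $\End_{\uB}(X)$ is a quotient of a local ring and is therefore itself either local or zero. Consequently, if one decomposes $\Omega S^B=A_1\oplus\cdots\oplus A_n$ into $\B$-indecomposables, the same formula is a decomposition of $\Omega S^B$ in $\uB$ whose non-zero entries are $\uB$-indecomposable. By Krull--Schmidt uniqueness in $\uB$, the direct summand $B$ of $\bigoplus_i A_i$ is $\uB$-isomorphic to $\bigoplus_{j\in J}A_j$ for some subset $J\subseteq\{1,\ldots,n\}$. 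The crucial point is that such a subsum is automatically a direct summand of $\Omega S^B$ in $\B$ (and not merely in $\uB$).

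To conclude, $\Omega\s$ is closed under direct summands in $\B$: this is the fact recorded in the paragraph preceding Theorem \ref{5.1}, obtained by the same argument as in Lemma \ref{5.0}(a) applied to the cotorsion pair $(\s,\T)$. Hence $\bigoplus_{j\in J}A_j$ belongs to $\Omega\s$, and the $\uB$-isomorphism $B\simeq\bigoplus_{j\in J}A_j$ gives $B\in\underline{\Omega\s}$, as required. The main obstacle is the middle step: the idempotent $\underline{hs_B}\in\End_{\uB}(\Omega S^B)$ need not lift to an idempotent in $\End_\B(\Omega S^B)$, so one cannot split $\Omega S^B$ directly inside $\B$; the Krull--Schmidt detour circumvents this by matching $B$, up to $\uB$-isomorphism, with a sub-sum of the already $\B$-split indecomposable decomposition of $\Omega S^B$.
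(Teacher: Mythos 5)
Your proposal is correct and follows essentially the same route as the paper: obtain the epimorphism $\underline{s_B}\colon\Omega S^B\to B$ from Proposition \ref{6.4}, split it by projectivity of $B$, and then use closure of $\Omega\s$ under direct summands (Lemma \ref{5.0}) to conclude $B\in\underline{\Omega\s}$. The only difference is that you spell out the Krull--Schmidt argument needed to pass from a $\uB$-direct summand to a genuine $\B$-direct summand of $\Omega S^B$, a step the paper leaves implicit when it simply invokes Lemma \ref{5.0}.
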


\begin{proof}
Suppose $B$ is projective in $\underline \h$, consider the commutative diagram \eqref{F5}. By Proposition \ref{6.4}, $\underline {s_B}$ is epimorphic in $\underline \h$, thus $B$ is a direct summand of $\Omega S^B$ in $\underline \h$. Hence by Lemma \ref{5.0} $B$ lies in $\underline {\Omega \s}$.
\end{proof}

From the following proposition we can get that in the case $\U\subseteq \T$ when the projectives in $\underline \h$ is enough.

\begin{prop}\label{enoughp}
Let $(\s,\T),(\U,\V)$ be a twin cotorsion pair satisfying $\U\subseteq \T$, then $\underline \h$ has enough projectives if and only if any indecomposable object $B\in \h-\U$ admits a short exact sequence
$$B\rightarrowtail S^1\twoheadrightarrow S^2$$
where $S^1,S^2\in \s$.
\end{prop}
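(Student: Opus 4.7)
The plan is to prove both directions using the explicit cokernel construction of Theorem \ref{15} and Lemma \ref{4.1}.

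For $(\Leftarrow)$, let $B$ be indecomposable in $\h-\U$ (objects of $\U=\W$ are zero in $\underline\h$, and additivity reduces matters to the indecomposable case). Given a short exact sequence $B\rightarrowtail S^1\twoheadrightarrow S^2$ with $S^i\in\s$, I would take the pull-back of $B\rightarrowtail S^1$ along the standard $\Omega S^1\rightarrowtail P_{S^1}\twoheadrightarrow S^1$ (with $P_{S^1}\in\mathcal P$) and invoke Proposition \ref{7} to obtain a $3\times 3$ diagram whose middle column $Y\rightarrowtail P_{S^1}\twoheadrightarrow S^2$ places $Y$ in $\Omega\s$ (so $Y$ is projective in $\underline\h$ by Proposition \ref{6.3}, hence proper-projective), and whose top row is a short exact sequence $\Omega S^1\xrightarrow{\iota}Y\xrightarrow{\pi}B$ in $\B$. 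The key claim is that $\underline\pi$ is a cokernel of $\underline\iota$ in $\underline\h$. Since $S^1\in\s$ and $\W\subseteq\T$ we have $\Ext^1(S^1,\W)=0$, so $p:\Omega S^1\to P_{S^1}$ is a left $\W$-approximation, and Definition \ref{MF} lets me take $w^{\Omega S^1}=p$; the resulting push-out $C_\iota$ fits into short exact sequences $Y\rightarrowtail C_\iota\twoheadrightarrow S^1$ and $P_{S^1}\rightarrowtail C_\iota\twoheadrightarrow B$. The projectivity of $P_{S^1}$ splits the second, giving $\underline{C_\iota}\simeq\underline B$ in $\underline\h$, and combined with $\underline{C_\iota}\simeq\underline{C_\iota^+}$ from Proposition \ref{8} (since $B\in\h$), Theorem \ref{15} identifies the cokernel $\underline{c_\iota^+c_\iota}$ of $\underline\iota$ with $\underline\pi$.

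For $(\Rightarrow)$, let $B\in\h-\U$ be indecomposable. By the hypothesis of enough projectives together with Proposition \ref{6.12}, I obtain a cokernel $\underline\delta:\underline{\Omega S_0}\to\underline B$ in $\underline\h$ with $\Omega S_0\in\Omega\s$. Applying Lemma \ref{4.1} to $\underline\delta$ produces a short exact sequence $\Omega S_0\rightarrowtail B'\twoheadrightarrow S'$ in $\B$ with $S'\in\s$ and $B'\simeq B$ in $\underline\h$. The Krull-Schmidt property together with $B$ indecomposable and $B\notin\W$ forces $B'\cong B\oplus W$ for some $W\in\W$. I would then form the push-out of $\Omega S_0\rightarrowtail B\oplus W$ along the defining $\Omega S_0\rightarrowtail P_{S_0}$; Proposition \ref{7} yields $Q$ sitting in two short exact sequences $P_{S_0}\rightarrowtail Q\twoheadrightarrow S'$ and $B\oplus W\rightarrowtail Q\twoheadrightarrow S_0$. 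Since $\mathcal P\subseteq\s$ by Lemma \ref{3}(d) applied to $(\s,\T)$, and $\s$ is extension-closed, we conclude $Q\in\s$.

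The final step extracts a short exact sequence starting at the summand $B$: because $S_0\in\s$ and $\Ext^1(\s,\T)=0$, the inflation $B\oplus W\rightarrowtail Q$ is a left $\T$-approximation of $B\oplus W$; running the argument of Lemma \ref{5.0}(a) verbatim with $\T$ in place of $\V$ shows that $B$ admits a short exact sequence $B\rightarrowtail Q\twoheadrightarrow Y$ with $\Ext^1(Y,\T)=0$, i.e., $Y\in\s$. Setting $S^1:=Q$ and $S^2:=Y$ then completes the proof. The hardest part is the $(\Leftarrow)$ direction, specifically verifying that the constructed epimorphism $\underline\pi$ is genuinely a cokernel (rather than only an epimorphism); this hinges on the splitting $C_\iota\cong B\oplus P_{S^1}$ afforded by the projectivity of $P_{S^1}$.
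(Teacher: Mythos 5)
Your $(\Rightarrow)$ direction tracks the paper's argument closely: enough projectives plus Proposition \ref{6.12} gives $\Omega S_0$, Lemma \ref{4.1} gives $\Omega S_0\rightarrowtail B'\twoheadrightarrow S'$ with $B'\cong B\oplus W$ by Krull-Schmidt, the push-out lands $Q\in\s$, and the summand $B$ is extracted. Your explicit remark that one must rerun the proof of Lemma \ref{5.0}(a) with $\T$ in place of $\V$ (so as to land in $\s$ rather than merely $\U$) is a correct and worthwhile observation, since the paper's bare citation of Lemma \ref{5.0} glosses over this point.

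Your $(\Leftarrow)$ direction is a genuinely different construction from the paper's (you pull back along $\Omega S^1\rightarrowtail P_{S^1}\twoheadrightarrow S^1$ to produce $Y\in\Omega\s$ with $\Omega S^1\rightarrowtail Y\twoheadrightarrow B$, whereas the paper lifts $P_{S^2}\twoheadrightarrow S^2$ through $S^1\twoheadrightarrow S^2$ to produce $a\colon\Omega S^2\to B$ and verifies the cokernel property directly from the push-out universal property), but it contains a genuine gap at the crucial step. You assert that ``the projectivity of $P_{S^1}$ splits'' the short exact sequence $P_{S^1}\rightarrowtail C_\iota\twoheadrightarrow B$. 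This is false as a general principle: projectivity of $P_{S^1}$ splits sequences with $P_{S^1}$ as the \emph{quotient} term, not as the subobject, and in general $\Ext^1_\B(B,P)\neq 0$ for $P\in\mathcal P$ (for instance, over the path algebra of $A_2$ the sequence $P_2\rightarrowtail P_1\twoheadrightarrow S_1$ is non-split even though $P_2$ is projective). The sequence in question does in fact split, but for a different reason: the $3\times 3$ diagram furnishes an inflation $y\colon Y\rightarrowtail P_{S^1}$ with $y\iota=p$, so the pair $(y,\mathrm{id}_{P_{S^1}})$ factors through the push-out $C_\iota$ via some $\rho\colon C_\iota\to P_{S^1}$ with $\rho\iota'=\mathrm{id}_{P_{S^1}}$, which is exactly the wanted retraction of $\iota'\colon P_{S^1}\rightarrowtail C_\iota$. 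With this replacement the rest of your identification of $\underline\pi$ with the cokernel $\underline{c_\iota^+c_\iota}$ of Theorem \ref{15} goes through (one should also say explicitly that $\underline{c_\iota^+}$ is an isomorphism because $\U\subseteq\T$ forces $\B^+=\B$, rather than ``since $B\in\h$''). Note that the paper's route sidesteps the splitting question entirely by checking the cokernel universal property by hand, which is arguably more robust.
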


\begin{proof}
We prove the "if" part first.\\
Since an object $B\in \h$ isomorphic to an object $B'\in \h$ in $\underline \h$ such that $B'$ does not have any direct summand in $\U$, we can only consider the obejct $B\in \h$ not having any direct summand in $\U$. Thus by assumption, $B$ admits a short exact sequence
$$B\rightarrowtail S^1\twoheadrightarrow S^2$$
where $S^1,S^2\in \s$. As $S^2$ admits a short exact sequence
$$\xymatrix{\Omega S^2 \ar@{ >->}[r]^b &P_{S^2} \ar@{->>}[r] &S^2.}$$
We have the following commutative diagram
$$\xymatrix{
\Omega S^2 \ar@{ >->}[r]^b \ar[d]_a &P_{S^2} \ar@{->>}[r] \ar[d] &S^2 \ar@{=}[d]\\
B \ar@{ >->}[r] &S^1 \ar@{->>}[r] &S^2.}
$$
Then we get a short exact sequence
$$\xymatrix{\Omega S^2 \ar@{ >->}[r]^{\svecv{a}{-b}} &{B\oplus P_{S^2}} \ar@{->>}[r] &S^1}$$
by Proposition \ref{PO}. Since $B\oplus P_{S^2}$ admits a short exact sequence
$$V\rightarrowtail U\twoheadrightarrow B\oplus P_{S^2}$$
where $V\in\V$ and $U\in \U=\W$, we obtain the following commutative diagram by Proposition \ref{7}
$$\xymatrix{
V \ar@{ >->}[r] \ar@{=}[d] &Q \ar@{->>}[r]^c \ar@{ >->}[d]^d &\Omega S^2 \ar@{ >->}[d]^{\svecv{a}{-b}}\\
V \ar@{ >->}[r] &U \ar@{->>}[d] \ar@{->>}[r] &B\oplus P_{S^2} \ar@{->>}[d]\\
&S^1 \ar@{=}[r] &S^1.
}
$$
Thus $Q\in \B^-=\h$ and $\underline {ca}=0$. We claim that $\underline a$ is the cokernel of $\underline c$ in $\underline \h$.\\
If $r:\Omega S^2\rightarrow M$ is a morphism in $\h$ such that $rc$ factors through $\W$, then there exists $e:U\rightarrow M$ such that $cr=ed$, since $d$ is a left $\T$-approximation of $Q$. Hence by definition of push-out, we get the following commutative diagram
$$\xymatrix{
Q \ar[r]^c \ar[d] &\Omega S^2 \ar[d]_{\svecv{a}{-b}} \ar@/^/[ddr]^r\\
U \ar[r] \ar@/_/[drr]_e &B\oplus P_{S^2} \ar@{.>}[dr]\\
&&M
}
$$
which implies that $\underline r$ factors through $\underline a$. Since $\underline a$ is epimorphic in $\underline \h$ by Proposition \ref{6.4}, we get that $\underline a$ is the cokernel of $\underline c$.\\
Now we assume that $\underline \h$ has enough projectives.\\
By Proposition \ref{6.12}, all the projective objects in $\underline \h$ lie in $\underline {\Omega S}$. Let $B$ be any indecomposable object in $\h-\U$ and $\beta:\Omega S\rightarrow B$ be a cokernel in $\underline \h$. Then by Lemma \ref{4.1}, we get a short exact sequence
$$\xymatrix{\Omega S \ar@{ >->}[r]^f &B' \ar@{->>}[r] &S'}$$
where $B'\in \h$ and $B'\simeq B$ in $\underline \h$ and $S'\in \s$. Since $\Omega S$ admits a short exact sequence
$$\xymatrix{\Omega S \ar@{ >->}[r]^p &P_S \ar@{->>}[r] &S}$$
we take a push-out of $f$ and $p$, then we get the following commutative diagram
$$\xymatrix{
\Omega S \ar@{ >->}[r]^f \ar@{ >->}[d]_p &B' \ar@{ >->}[d] \ar@{->>}[r] &S' \ar@{=}[d]\\
P_S \ar@{ >->}[r] \ar@{->>}[d] &Q' \ar@{->>}[r] \ar@{->>}[d] &S'\\
S \ar@{=}[r] &S.
}
$$
From the second row we get $Q'\in \s$. Since $B$ is indecomposable, it is a direct summand of $B'$. Hence by Lemma \ref{5.0}, $B$ admits a short exact sequence
$$B\rightarrowtail Q'\twoheadrightarrow S''$$
where $S''\in \s$.
\end{proof}

By duality, we have
\begin{prop}\label{6.5}
Let $(\s,\T),(\U,\V)$ be a twin cotorsion pair satisfying $\T\subseteq \U$, then any object in $\underline \h$ is injective if and only if it lies in $\underline {\Omega^- \V}$.
\end{prop}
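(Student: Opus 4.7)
The plan is to derive this statement as a formal consequence of Propositions \ref{6.3} and \ref{6.12} by applying the duality between a twin cotorsion pair and its opposite, recorded in the remark following Definition \ref{ctp}.

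If $(\s,\T),(\U,\V)$ is a twin cotorsion pair on $\B$ satisfying $\T\subseteq\U$, then the opposite pair $(\V^{\op},\U^{\op}),(\T^{\op},\s^{\op})$ is a twin cotorsion pair on $\B^{\op}$ satisfying $\U^{\op}\subseteq\T^{\op}$, which is precisely the hypothesis of Propositions \ref{6.3} and \ref{6.12}. Passage to the opposite category interchanges the subcategories $\B^+$ and $\B^-$, preserves $\W=\T\cap\U$, and reverses all arrows; hence the heart of the opposite pair is naturally identified with $(\underline\h)^{\op}$. In particular, an object is injective in $\underline\h$ if and only if it is projective in the heart of the opposite pair.

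Next I would translate the subcategory $\Omega\s'$ attached to the opposite pair, where $\s'=\V^{\op}$ and the projectives of $\B^{\op}$ are the injectives $\mathcal I$ of $\B$. Unwinding the definition, an object belongs to $\Omega\s'$ on $\B^{\op}$ exactly when it fits into a short exact sequence in $\B$ of the form $V\rightarrowtail I\twoheadrightarrow X$ with $V\in\V$ and $I\in\mathcal I$; that is, exactly when it lies in $\Omega^-\V$. Thus $\underline{\Omega\s'}$ identifies with $\underline{\Omega^-\V}$.

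Combining Proposition \ref{6.3} (objects of $\underline{\Omega\s'}$ are projective in the dual heart) with Proposition \ref{6.12} (every projective object of the dual heart lies in $\underline{\Omega\s'}$) yields the equivalence for the opposite pair, and translating back to $\B$ produces exactly the desired characterisation: the injective objects of $\underline\h$ are precisely the objects of $\underline{\Omega^-\V}$. The only point requiring real care is the bookkeeping of the duality dictionary above; once it is in place, no substantive obstacle remains.
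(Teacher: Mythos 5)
Your proposal is correct and takes exactly the route the paper intends: the paper disposes of Proposition \ref{6.5} with the single phrase ``By duality,'' and you have spelled out the duality dictionary (opposite twin cotorsion pair, swap of $\B^+$ and $\B^-$, identification of $\underline\h$ with $(\underline\h)^{\op}$, and the identification of the syzygy construction in $\B^{\op}$ with $\Omega^-\V$ in $\B$). One small slip: in the opposite pair $(\V^{\op},\U^{\op}),(\T^{\op},\s^{\op})$ the role of ``$\U$'' is played by $\T^{\op}$ and that of ``$\T$'' by $\U^{\op}$, so the inclusion you need is $\T^{\op}\subseteq\U^{\op}$ (equivalent to the given $\T\subseteq\U$), not $\U^{\op}\subseteq\T^{\op}$ as written; this is purely a labelling slip and the rest of your translation uses the correct direction.
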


\begin{prop}\label{6.11}
Let $(\s,\T),(\U,\V)$ be a twin cotorsion pair satisfying $\T\subseteq \U$, then any object $\B\in \h$ admits a monomorphism $\beta:B\rightarrow \Omega^-V$ in $\underline \h$ where $\Omega^-V\in \underline {\Omega^- \V}$.
\end{prop}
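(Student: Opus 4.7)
The plan is to dualize Proposition \ref{6.4}. Under the hypothesis $\T\subseteq\U$ we have $\W=\T\cap\U=\T$, and since $\s\subseteq\U$ forces $\V\subseteq\T$, we get $\mathcal I\subseteq\V\subseteq\W\subseteq\U$. Moreover, for any $B\in\B$ the sequence $B\rightarrowtail T^B\twoheadrightarrow S^B$ coming from $(\s,\T)$ has $T^B\in\T\subseteq\U$, so $T^B\in\W$; hence $\B^-=\B$ and $\h=\B^+$. In particular, for every $V\in\V$ its defining injective resolution $V\rightarrowtail I^V\twoheadrightarrow\Omega^-V$ has $I^V\in\mathcal I\subseteq\W$ and $V\in\V$, so $\Omega^-V\in\B^+=\h$, and hence $\Omega^-V$ lives in $\underline{\Omega^-\V}\subseteq\underline\h$.

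Given $B\in\h$, I would carry out the construction dual to diagram \eqref{F5}. Take the short exact sequence $V_B\rightarrowtail W_B\twoheadrightarrow B$ witnessing $B\in\B^+$ and the injective resolution $V_B\rightarrowtail I_{V_B}\twoheadrightarrow\Omega^-V_B$. By injectivity of $I_{V_B}$, extend $V_B\rightarrowtail I_{V_B}$ along $V_B\rightarrowtail W_B$ to get a morphism $W_B\to I_{V_B}$, which induces $t_B\colon B\to\Omega^-V_B$ on cokernels. This yields a commutative diagram
$$\xymatrix{
V_B \ar@{ >->}[r] \ar@{=}[d] &W_B \ar@{->>}[r] \ar[d] &B \ar[d]^{t_B}\\
V_B \ar@{ >->}[r] &I_{V_B} \ar@{->>}[r]^{s} &\Omega^-V_B
}$$
of short exact sequences. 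The dual of Proposition \ref{PO} (applied to a square of two deflations, via condition (d)) then implies that the right square is both a pull-back and a push-out.

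It remains to show $\underline{t_B}$ is monomorphic in $\underline\h$, dualizing the epimorphism argument of Proposition \ref{6.4}. Let $c\colon C\to B$ in $\h$ satisfy $\underline{t_B c}=0$, so $t_B c$ factors through $\W$ via morphisms $h\colon C\to W$ and $g\colon W\to\Omega^-V_B$ with $W\in\W$. Since $V_B\in\V$ gives $\Ext^1_\B(\U,V_B)=0$, the deflation $s$ is a right $\U$-approximation of $\Omega^-V_B$; as $W\in\W\subseteq\U$, there is $g'\colon W\to I_{V_B}$ with $sg'=g$, and consequently $s(g'h)=t_B c$. The pull-back property of the right square then yields a morphism $C\to W_B$ through which $c$ factors, so $c$ factors through $W_B\in\W$ and $\underline c=0$. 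Hence $\underline{t_B}\colon B\to\Omega^-V_B$ is the required monomorphism. I do not expect a serious obstacle: the whole construction is a direct dualization, and the only points to verify carefully are that the right square is a pull-back (by the dual of Proposition \ref{PO}) and that $s$ is a right $\U$-approximation (immediate from $\Ext^1_\B(\U,\V)=0$).
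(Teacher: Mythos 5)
Your proof is correct and is exactly the argument the paper intends: Proposition \ref{6.11} is stated "by duality" as the dual of Proposition \ref{6.4}, and you have carried out that dualization faithfully — replacing the sequence $B\rightarrowtail W^B\twoheadrightarrow S^B$ and the projective presentation of $S^B$ by the sequence $V_B\rightarrowtail W_B\twoheadrightarrow B$ and the injective copresentation of $V_B$, replacing the push-out by a pull-back (via the dual of Proposition \ref{PO}), and replacing the left $\T$-approximation $p$ by the right $\U$-approximation $s$.
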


\begin{prop}\label{enoughi}
Let $(\s,\T),(\U,\V)$ be a twin cotorsion pair satisfying $\T\subseteq \U$, then the heart has enough injectives if and only if any object $B\in \h-\T$ admits a short exact sequence
$$V_2\rightarrowtail V_1\twoheadrightarrow B$$
where $V_1,V_2\in \V$.
\end{prop}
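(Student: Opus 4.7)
The plan is to derive the statement as the formal dual of Proposition~\ref{enoughp}. The duality $\B \leftrightarrow \B^{\op}$ sends the twin cotorsion pair $(\s,\T),(\U,\V)$ to $(\V^{\op},\U^{\op}),(\T^{\op},\s^{\op})$ and translates the hypothesis $\T \subseteq \U$ to $\U^{\op} \subseteq \T^{\op}$, which is precisely the hypothesis of Proposition~\ref{enoughp} on $\B^{\op}$. Under this duality $\mathcal P \leftrightarrow \mathcal I$, $\Omega\s \leftrightarrow \Omega^-\V$, short exact sequences reverse, and ``enough projectives in $\underline\h$'' dualises to ``enough injectives in $\underline\h$''. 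Thus the statement is a formal consequence of Proposition~\ref{enoughp}; I would nevertheless write out the dual construction in $\B$ directly for self-containment.

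For the ``if'' direction, fix $B \in \h - \T$, which by replacing $B$ by an isomorphic object in $\underline\h$ we may assume has no summand in $\W = \T$, so the hypothesis supplies a short exact sequence $V_2 \rightarrowtail V_1 \twoheadrightarrow B$ with $V_1, V_2 \in \V$. Take an injective coresolution $V_2 \rightarrowtail I^{V_2} \twoheadrightarrow \Omega^- V_2$ with $I^{V_2} \in \mathcal I \subseteq \V$ (Lemma~\ref{3}(d)); extending $V_2 \rightarrowtail I^{V_2}$ along $V_2 \rightarrowtail V_1$ via the injectivity of $I^{V_2}$ produces a map $V_1 \to I^{V_2}$ and hence a map $a\colon B \to \Omega^- V_2$ on cokernels, and Proposition~\ref{PO} assembles these into a short exact sequence $V_1 \rightarrowtail B \oplus I^{V_2} \twoheadrightarrow \Omega^- V_2$. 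Choosing an inflation $B \oplus I^{V_2} \rightarrowtail V''$ with $V'' \in \V$ and cokernel $U'' \in \U = \W$ from the cotorsion pair $(\U,\V)$, Proposition~\ref{7}(b) yields a commutative diagram whose middle row $V_1 \rightarrowtail V'' \twoheadrightarrow Q$ exhibits $Q \in \B^+ = \h$ (note $V'' \in \V \subseteq \T = \W$). The dual of the cokernel computation in Proposition~\ref{enoughp} then shows that $\underline a$ is the kernel in $\underline\h$ of the induced morphism $\Omega^- V_2 \to Q$; since $\Omega^- V_2$ is injective in $\underline\h$ by Proposition~\ref{6.5}, this exhibits $B$ as a kernel-subobject of an injective object of $\underline\h$.

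For the ``only if'' direction, assume $\underline\h$ has enough injectives and let $B \in \h - \T$ be indecomposable. By Proposition~\ref{6.5} there is a kernel-monomorphism $\beta\colon B \to \Omega^- V$ in $\underline\h$ with $V \in \V$, and the dual of Lemma~\ref{4.1} allows us to realise $\beta = \underline g$ for a deflation $g\colon B' \twoheadrightarrow \Omega^- V$ in $\B$ fitting into a short exact sequence $V' \rightarrowtail B' \twoheadrightarrow \Omega^- V$ with $B' \simeq B$ in $\underline\h$ and $V' \in \V$. Pulling $g$ back along $I^V \twoheadrightarrow \Omega^- V$ (from the injective coresolution of $V$) and applying Proposition~\ref{7} produces a three-by-three diagram whose middle column is a short exact sequence $V \rightarrowtail Q'' \twoheadrightarrow B'$ and whose middle row is $V' \rightarrowtail Q'' \twoheadrightarrow I^V$, so $Q'' \in \V$ by extension-closure of $\V$ (Lemma~\ref{3}(c)). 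Since $B$ is an indecomposable summand of $B'$, Lemma~\ref{5.0}(b) produces a short exact sequence $Z \rightarrowtail Q'' \twoheadrightarrow B$ with $Z \in \V$, providing the required sequence with $V_1 = Q''$ and $V_2 = Z$.

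The main technical step is transferring the three-by-three diagrams of Proposition~\ref{enoughp} to the dual setting while correctly tracking membership in $\V$ versus $\mathcal I$ throughout; once the dictionary (dual of Lemma~\ref{4.1}, Corollary~\ref{eq3}, and Proposition~\ref{6.12}) is fixed, the argument runs formally in parallel to the projective case.
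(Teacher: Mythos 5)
Your proposal is correct and takes the same approach as the paper: the paper proves Proposition~\ref{enoughp} and then obtains Propositions~\ref{6.5}, \ref{6.11} and \ref{enoughi} by dualisation, which is exactly the route you spell out. One small slip to flag: under the hypothesis $\T\subseteq\U$ one has $\W=\T\cap\U=\T$, so the claim ``cokernel $U''\in\U=\W$'' is wrong (it should read $U''\in\U$, and $\W=\T$); fortunately you never use $U''\in\W$, and the membership you actually need, namely $V''\in\V\subseteq\T=\W$, you state correctly a line later, so the argument stands. You also choose the $(\U,\V)$-coapproximation $B\oplus I^{V_2}\rightarrowtail V''\twoheadrightarrow U''$ where the strict dual of the paper's proof would use the $(\s,\T)$-coapproximation $B\oplus I^{V_2}\rightarrowtail T''\twoheadrightarrow S''$; both yield a middle term in $\W$ and a deflation onto $Q$ with kernel in $\V$ (hence a right $\U$-approximation), so the kernel computation goes through either way.
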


\section{Localisation on the heart of a special twin cotorsion pair}

Let $(\s,\T),(\U,\V)$ be a twin cotorsion pair on $\B$ such that $\T=\U$, in this case we get $\B^+=\B^-=\B$ and $\W=\T$, hence $\underline \h=\B/\T$. According to Theorem \ref{5.1}, $\B/\T$ is integral. Moreover, By Proposition \ref{6.3} (resp. Proposition \ref{6.5}), we obtain that any object in $\Omega \s$ (resp. $\Omega^-\V$) is projective (resp. injective) in $\B/\T$.\\
Let $R$ be the class of regular morphisms in $\B/\T$, then by Theorem \cite[{p173}]{R}, the localisation $(\B/\T)_{R}$ (if it exists) is abelian.

Till the end of this section we assume that $\B$ is skeletally small and $k$-linear over a field $k$ and has a twin cotorsion pair $(\s,\T),(\T,\V)$. We denote that by Proposition \ref{sp} it is equivalent to assume that $\B$ has a cotorsion pair $(\s,\T)$ such that $\s\subseteq \T$ and $\T$ is contravariantly finite.

Let $\mathcal D$ be a category and $R'$ is a class of morphisms on $\mathcal D$. If $R'$ admits both a calculus of right fractions and a calculus of left fractions (for details, see \cite[{\S 4}]{BM}), then the Gabriel-Zisman localisation ${\mathcal D}_{R'}$ at $R'$ (if it exists) has a very nice description. The objects in ${\mathcal D}_{R'}$ are the same as the objects in $\mathcal D$. The morphism from $X$ to $Y$ are of the form
$$\xymatrix{X &A \ar[r]^f \ar[l]_r &Y}$$
denoted by $[r,f]$ where $r$ lies in $R'$.\\
The localisation functor from $\mathcal D$ to ${\mathcal D}_{R'}$ takes a morphism $f$ to $[id,f]$. We denote this image by $[f]$. For $r\in R'$, $[r,id]$ is the inverse of $[r]$. We denote it $x_r$. Thus, every morphism has the form $[r,f]=[f]x_r$.

By \cite[{Corollary 4.2}]{BM}, $R$ admits both a calculus of right fractions and a calculus of left fractions.

For a subcategory $\mathcal C\subseteq \B$, we denote by $[\mathcal C]$ the full subcategory of $(\B/\T)_{R}$ which has the same objects as $\mathcal C$.

\begin{lem}
We have $\Omega \s/\mathcal P= \underline {\Omega \s}\simeq [\Omega \s]$.
\end{lem}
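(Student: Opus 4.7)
The plan is to prove the two assertions separately: first the equality $\Omega \s / \mathcal P = \underline{\Omega \s}$, and then the equivalence $\underline{\Omega \s} \simeq [\Omega \s]$.

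First I would establish the equality by showing that for $X, Y \in \Omega \s$, a morphism $f \colon X \to Y$ factors through some object of $\T$ if and only if it factors through some object of $\mathcal P$. The ``if'' direction is trivial since $\mathcal P \subseteq \T = \W$. For the ``only if,'' write $X = \Omega S$ and suppose $f = g \circ h$ with $h \colon \Omega S \to T$, $T \in \T$. The defining short exact sequence $\Omega S \rightarrowtail P_S \twoheadrightarrow S$ together with $\Ext^1_\B(\s, \T) = 0$ (which holds since $(\s, \T)$ is a cotorsion pair) shows that the inflation $\Omega S \hookrightarrow P_S$ is a left $\T$-approximation; thus $h$ extends to a morphism $P_S \to T$, and composing with $g$ factors $f$ through $P_S \in \mathcal P$.

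Next I would show that the localisation functor $L \colon \underline{\h} = \B/\T \to (\B/\T)_R$ restricted to $\underline{\Omega \s}$ is fully faithful; since $L$ is the identity on objects, this yields $\underline{\Omega \s} \simeq [\Omega \s]$. For faithfulness, suppose $f \colon \Omega S \to Y$ satisfies $L(f) = 0$; by the calculus of right fractions there exists $s \in R$ with $f \circ s = 0$ in $\underline{\h}$, and since every regular morphism is in particular epic, this forces $f = 0$. For fullness, any morphism $\alpha \colon \Omega S \to Y$ in $(\B/\T)_R$ is represented by a roof $\Omega S \xleftarrow{r} A \xrightarrow{g} Y$ with $r \in R$; since $\Omega S$ is projective in $\underline{\h}$ by Proposition \ref{6.3} (applied with $\U = \T$) and $r$ is epimorphic, there is a section $t \colon \Omega S \to A$ of $r$, whence the roof $[r, g]$ is equivalent to $[\mathrm{id}, g \circ t] = L(g \circ t)$, so $\alpha$ lies in the image of $L$.

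The main subtlety is the first step, where we must identify the quotient by $\mathcal P$ with the quotient by $\W = \T$; this is precisely where the cotorsion pair hypothesis $\Ext^1_\B(\s, \T) = 0$ enters, via the approximation property of $\Omega S \hookrightarrow P_S$. The remaining arguments are standard applications of the projectivity of objects of $\Omega \s$ in $\underline{\h}$ together with the Gabriel--Zisman description of the localisation.
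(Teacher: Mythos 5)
Your proof is correct. The first half (identifying $\Omega\s/\mathcal P$ with $\underline{\Omega\s}$) follows essentially the same strategy as the paper — showing a morphism out of $\Omega S$ factors through $\T$ iff it factors through $\mathcal P$ — but your version is cleaner: you observe directly that $q\colon \Omega S\rightarrowtail P_S$ is a left $\T$-approximation (from $\Ext^1_\B(\s,\T)=0$) and extend the factorization through $T\in\T$ along $q$, whereas the paper first factors through a right $\U$-approximation $w_B$ of the target and only then uses $q$. For the second half, the paper simply cites Buan--Marsh (Lemmas 4.4 and 5.4) for faithfulness and fullness of the localisation functor on $\underline{\Omega\s}$; you inline the argument instead: faithfulness from the right-fraction criterion (if $L(f)=0$ then $fs=0$ for some regular, hence epic, $s$, so $f=0$), and fullness from the projectivity of $\Omega S$ in $\underline\h$ (Proposition \ref{6.3} with $\U=\T$) applied to the epimorphic leg $r$ of a roof, producing a section $t$ so that $[r,g]=[gt]$. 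This is a legitimate, self-contained replacement for the citation and arguably makes the lemma easier to follow in isolation; the trade-off is that you rely on the calculus of right fractions (via \cite[Cor.~4.2]{BM}) and Proposition \ref{6.3}, which the paper does implicitly anyway.
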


\begin{proof}
We first show that a morphism $f:\Omega S\rightarrow B$ factors through $\mathcal P$ if and only if it factors through $\T$.\\
Since $\mathcal P\subseteq \U=\T$, we only need to show $f$ factors through $\T$ implies it factors through $\mathcal P$. Suppose $f$ factors through $\T$. By definition $\Omega S$ admits the following short exact sequence
$$\xymatrix{\Omega S \ar@{ >->}[r]^q &P_S \ar@{->>}[r] &S}$$
where $P_S\in \mathcal P$, $S\in \s$ and $B$ admits the following short exact sequence
$$\xymatrix{V_B \ar@{ >->}[r] &W_B \ar@{->>}[r]^{w_B} &B.}$$
As $w_B$ is a right $\U$-approximation of $B$, there exists a morphism $a:\Omega S\rightarrow W_B$ such that $f=w_Ba$. Since $q$ is a left $\T$-approximation of $\Omega S$, there exists a morphism $b:P\rightarrow W_B$ such that $bq=a$, hence $f=w_Bbq$. Thus by definition we have $\Omega \s/\mathcal P= \underline {\Omega \s}$.\\
Let $L:\underline {\Omega \s}\rightarrow [\Omega \s]$ be the location of the localisation functor from $\B/\T$ to $(\B/\T)_{R}$. We claim that it is an equivalence. Obviously it is dense, it is faithful by \cite[{Lemma 4.4}]{BM} and full by \cite[{Lemma 5.4}]{BM}.
\end{proof}


Denote by $\Mod {\mathcal C}$ the category of contravariant additive functors from a category ${\mathcal C}$ to $\mod k$ for any category $\mathcal C$. Let $\mod {\mathcal C}$ be the full subcategory of $\Mod{\mathcal C}$ consisting of objects $A$ admitting an exact sequence:
$${\Hom_{\mathcal C}(-,C_1)}\xrightarrow{\beta} {\Hom_{\mathcal C}(-,C_0)}\xrightarrow{\alpha}  A\rightarrow0$$
where $C_0,C_1\in \mathcal C$.

Since $\underline {\Omega \s}\simeq [\Omega \s]$, We have $\mod (\Omega \s/\mathcal P)\simeq \mod [\Omega \s]$.

We give the following proposition which is an analogue of \cite[Lemma 5.5]{BM} (for more details, see \cite[{\S 5}]{BM}).

\begin{prop}\label{10.1}
If $(\s,\T),(\T,\V)$ is a twin cotorsion pair on $\B$ which is skeletally small, and let $R$ denote the class of morphisms which are both monomorphic and epimorphic in $\B/\T$, then
\begin{itemize}
\item[(a)] The projectives in $(\B/\T)_{R}$ are exactly the objects in $\Omega \s$.

\item[(b)] The category $(\B/\T)_{R}$ has enough projectives.

\end{itemize}
\end{prop}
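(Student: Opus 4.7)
For part (a) the plan is to prove the two inclusions separately and then derive (b) as a quick consequence. Everything takes place inside the integral category $\B/\T$ (integrality coming from Theorem \ref{5.1} applied to $\U=\T$, since both $\T\subseteq \U\ast\V$ and $\mathcal I\subseteq \W=\T$ are automatic) and its Gabriel--Zisman localisation $(\B/\T)_R$, which is abelian with a faithful and exact localisation functor $L$ by Rump's theory.

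For the inclusion $\Omega\s\subseteq\{\text{projectives of }(\B/\T)_R\}$, I start from the fact (Proposition \ref{6.3}) that each $\Omega S\in\Omega\s$ is already projective in $\B/\T$. Given an epimorphism $\alpha\colon X\to Y$ in $(\B/\T)_R$ and a morphism $\beta\colon\Omega S\to Y$, I use the calculus of right fractions on $R$ to write $\beta=[s,g]$ via a roof $\Omega S\xleftarrow{s}B\xrightarrow{g}Y$ with $s\in R$. Since $s$ is epimorphic in $\B/\T$ and $\Omega S$ is projective there, $s$ splits by some $t\colon\Omega S\to B$, so $[s]^{-1}=[t]$ in $(\B/\T)_R$ and $\beta=[gt]$ is the image of an honest morphism $h:=gt\in\Hom_{\B/\T}(\Omega S,Y)$. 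Similarly write $\alpha=[f]\circ[r]^{-1}$ with $r\in R$ and $f\colon C\to Y$; since $[r]^{-1}$ is an iso, $[f]$ is epi in $(\B/\T)_R$, and faithfulness of $L$ then forces $f$ to be epi in $\B/\T$ (if $g\colon Y\to Z$ satisfies $gf=0$ in $\B/\T$, then $[g][f]=0$ gives $[g]=0$, hence $g=0$). Projectivity of $\Omega S$ in $\B/\T$ supplies a lift $k\colon\Omega S\to C$ with $fk=h$, and $\gamma:=[rk]\colon\Omega S\to X$ satisfies $\alpha\gamma=[f][r]^{-1}[r][k]=[fk]=[h]=\beta$.

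For the reverse inclusion, let $P$ be projective in $(\B/\T)_R$. Part (b), proved below, provides an epi $[\pi]\colon\Omega S\to P$ in $(\B/\T)_R$ coming from $\pi\colon\Omega S\to P$ in $\B/\T$. Projectivity of $P$ yields a section $\sigma\colon P\to\Omega S$ in $(\B/\T)_R$, so $e:=\sigma\circ[\pi]$ is an idempotent of $\Omega S$ in $(\B/\T)_R$. By the lemma immediately preceding the proposition, $L$ restricts to an equivalence $\underline{\Omega\s}\simeq[\Omega\s]$; in particular $e$ comes from a unique idempotent $\bar e\colon\Omega S\to\Omega S$ in $\B/\T$. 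Since $\B$ is Krull--Schmidt, $\B/\T$ is idempotent-complete, so $\bar e$ splits, yielding $\Omega S\simeq Q\oplus Q'$ in $\B/\T$. The remark after the definition of $\Omega\mathcal{C}$, via Lemma \ref{5.0}, shows that $\Omega\s$ is closed under summands in $\B$, whence $Q\in\Omega\s$; uniqueness of idempotent splittings in the abelian category $(\B/\T)_R$ then forces $P\simeq Q$ in $(\B/\T)_R$, placing $P$ in $[\Omega\s]$. Part (b) is then immediate: any object $X$ of $(\B/\T)_R$ is an object of $\B/\T$, and Proposition \ref{6.4} provides an epi $\alpha\colon\Omega S\to X$ in $\B/\T$; exactness of $L$ preserves this epi, and part (a) identifies $\Omega S$ as projective.

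The step I expect to be the main obstacle is the fine control of epimorphisms under $L$: both inclusions of (a) and the proof of (b) depend on the claim that $L$ preserves and reflects epis, which rests on the integrality of $\B/\T$ and Rump's identification of the localisation at regular morphisms with the abelian envelope of an integral category. A secondary but essential technical input is idempotent completeness of $\B/\T$, needed in the reverse direction of (a) to split the lifted idempotent $\bar e$; this is where the Krull--Schmidt hypothesis on $\B$ enters crucially.
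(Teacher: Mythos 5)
The paper states Proposition~\ref{10.1} with no proof, deferring to \cite[Lemma 5.5]{BM}; your write-up supplies the missing argument, and it is correct in substance and essentially the argument one would expect. A few points deserve sharper justification than you give. First, you invoke faithfulness of the localisation functor $L$ on all of $\B/\T$ (to pass from ``$[f]$ epi'' to ``$f$ epi''); the paper only cites \cite[Lemma 4.4]{BM} for faithfulness on $\underline{\Omega\s}$. But global faithfulness is cheap: in the calculus of right (resp.\ left) fractions one has $[g]=0$ iff $gs=0$ (resp.\ $sg=0$) for some $s\in R$, and since $s$ is epi (resp.\ mono) this forces $g=0$. Second, the appeal to ``exactness of $L$'' and to Rump's abelianisation is vaguer than it needs to be; the only thing you use is that $L$ preserves epis, and this follows directly from faithfulness: if $\phi[\alpha]=0$ with $\phi=[r]^{-1}[g]$ then $[g\alpha]=0$, hence $g\alpha=0$, hence $g=0$ when $\alpha$ is epi in $\B/\T$. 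Third, in the reverse inclusion you split the lifted idempotent $\bar e$ and then quote closure of $\Omega\s$ under summands (Lemma~\ref{5.0}); but $Q$ is a priori only a summand of $\Omega S$ in $\B/\T$, not in $\B$. This is repaired by the Krull--Schmidt hypothesis (which you do flag, though for the wrong reason): since $\B/\T$ inherits the Krull--Schmidt property, a summand of $\Omega S$ in $\B/\T$ is isomorphic there to a genuine direct summand of $\Omega S$ in $\B$, to which Lemma~\ref{5.0} applies. With these three points made precise, the argument goes through and gives a self-contained proof where the paper simply refers to \cite{BM}.
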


For convenience, for any objects $X,Y\in \B$, we denote $\Hom_{[\B]}(X,Y)$ by $[X,Y]$. For any morphism $f:X\rightarrow Y$, we denote $\Hom_{[\B]}(-,[\underline f])$ by $-\circ [\underline f]$ and $\Hom_{[\B]}([\underline f],-)$ by $[\underline f]\circ -$.

Now we can prove the following theorem.

\begin{thm}\label{local}
Let $\B$ be a skeletally small, Krull-Schimdt, $k$-linear exact category with enough projecitves and injectives, containing a twin cotorsion pair
$$(\s,\T),(\T,\V).$$
Let $R$ denote the class of morphisms which are both monomorphic and epimorphic in $\B/\T$ and $(\B/\T)_{R}$ denote the localisation of $\B/\T$ at $R$, then
$$(\B/\T)_{R} \simeq \mod (\Omega \s/\mathcal P).$$
\end{thm}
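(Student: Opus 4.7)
The plan is to apply the standard equivalence between an abelian category with enough projectives and the category of finitely presented contravariant functors on its projective subcategory, taking $\mathcal A = (\B/\T)_R$ with projective subcategory $[\Omega\s]$. By Theorem \ref{5.1} the heart $\B/\T$ is integral, so by the result of Rump cited just before Theorem \ref{local} the localisation $(\B/\T)_R$ is abelian; by Proposition \ref{10.1} it has enough projectives and its projectives are precisely (the images of) the objects of $\Omega\s$; and by the lemma preceding Theorem \ref{local}, $\Omega\s/\mathcal P = \underline{\Omega\s} \simeq [\Omega\s]$, so $\mod(\Omega\s/\mathcal P) \simeq \mod[\Omega\s]$. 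Hence it is enough to produce an equivalence $(\B/\T)_R \simeq \mod[\Omega\s]$.

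The natural candidate is the functor
$$F \colon (\B/\T)_R \to \mod[\Omega\s], \qquad X \mapsto \Hom_{(\B/\T)_R}(-,X)|_{[\Omega\s]},$$
with the obvious action on morphisms. To check that $F$ lands in $\mod[\Omega\s]$, I would fix $X \in (\B/\T)_R$ and use Proposition \ref{10.1}(b) to choose a projective presentation $[\Omega S_1]\to[\Omega S_0]\to X\to 0$ in $(\B/\T)_R$. Since every object of $[\Omega\s]$ is projective in $(\B/\T)_R$ by Proposition \ref{10.1}(a), applying $\Hom_{(\B/\T)_R}(-,-)$ to this presentation gives an exact sequence of functors on $[\Omega\s]$, and Yoneda identifies each $\Hom_{(\B/\T)_R}(-,[\Omega S_i])|_{[\Omega\s]}$ with the representable $\Hom_{[\Omega\s]}(-,\Omega S_i)$. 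The resulting exact sequence
$$\Hom_{[\Omega\s]}(-,\Omega S_1) \to \Hom_{[\Omega\s]}(-,\Omega S_0) \to F(X) \to 0$$
shows $F(X)\in\mod[\Omega\s]$.

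For full faithfulness, for $X,Y\in(\B/\T)_R$ I would apply $\Hom_{(\B/\T)_R}(-,Y)$ to a projective presentation of $X$ to exhibit $\Hom_{(\B/\T)_R}(X,Y)$ as the kernel of $\Hom_{(\B/\T)_R}([\Omega S_0],Y) \to \Hom_{(\B/\T)_R}([\Omega S_1],Y)$; by the projective presentation of $F(X)$ displayed above together with the Yoneda lemma in $\mod[\Omega\s]$, the same kernel is canonically $\Hom_{\mod[\Omega\s]}(F(X),F(Y))$, and the natural comparison map is compatible with these identifications. Essential surjectivity is dual: given $M\in\mod[\Omega\s]$ with presentation $\Hom_{[\Omega\s]}(-,\Omega S_1) \to \Hom_{[\Omega\s]}(-,\Omega S_0) \to M \to 0$, Yoneda promotes the natural transformation between the representables to a morphism $g\colon [\Omega S_1]\to[\Omega S_0]$ in $(\B/\T)_R$; setting $X := \Coker(g)$ in the abelian category $(\B/\T)_R$ and comparing presentations via the five lemma yields $F(X)\simeq M$.

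The main obstacle I anticipate is not a new idea but the bookkeeping around the localisation: one must check that the objects of $[\Omega\s]$ are projective in the full categorical sense in the abelian category $(\B/\T)_R$ (not merely lifting across the particular cokernels $\underline{s_B}$ manufactured in Section 8), so that the Yoneda calculation used for $\mod[\Omega\s]$ genuinely transfers to $(\B/\T)_R$; this reduces to the standard equivalence ``projective iff $\Ext^1 = 0$'' inside an abelian category. One must also verify that $F$ is well-defined and functorial on the fraction-morphisms $[r,f]$ of $(\B/\T)_R$, but this is immediate from the fact that $(\B/\T)_R$ is an honest category (its $\Hom$-sets and composition are given by Proposition \ref{10.1} and the discussion before it), so pre- and post-composition with fractions induce well-defined maps on $F$.
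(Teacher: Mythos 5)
Your proposal is correct and follows essentially the same route as the paper: reduce to $\mod[\Omega\s]$ via the preceding lemma, invoke Proposition \ref{10.1} for projectivity of $[\Omega\s]$ and existence of projective presentations, define the restricted Hom functor, and establish the equivalence by Yoneda and cokernel comparisons; the paper simply writes the faithful/full/dense checks out separately in diagram form. No substantive difference.
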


\begin{proof}
It suffices to show $(\B/\T)_{R} \simeq \mod [\Omega \s]$.\\
From any object $B\in (\B/\T)_{R}$, there is a projective presentation of $B$:
$$\Omega S_1\xrightarrow{[\underline {d_1}]} \Omega S_0 \xrightarrow{[\underline {d_0}]} B\rightarrow 0$$
Let $\Omega S$ be any object in $[\Omega \s]$, we get the following exact sequence:
$$[\Omega S,\Omega S_1]\xrightarrow{\Omega S\circ [\underline {d_1}]} [\Omega S,\Omega S_0] \xrightarrow{\Omega S\circ [\underline {d_0}]} [\Omega S,B] \rightarrow 0$$
which induces a exact sequence in $\mod[\Omega \s]$:
$$[-,\Omega S_1]\xrightarrow{- \circ [\underline {d_1}]} [-,\Omega S_0] \xrightarrow{- \circ[\underline {d_0}]} [-,B] \rightarrow 0$$
Now we can define a functor $\Phi:(\B/\T)_{R} \rightarrow \mod [\Omega \s]$ as follows:
\begin{align*}
B \mapsto [-,B],\\
[\underline f] \mapsto -\circ [\underline f].
\end{align*}
$\bullet$~Let us prove that $\Phi$ is faithful.\\
For any morphism $[\underline f]:B\rightarrow B'$  we have the following commutative diagram
$$\xymatrix{
\Omega S_1 \ar[r]^{[\underline {d_1}]} \ar@{.>}[d]_{[\underline {f_1}]} &\Omega S_0 \ar[r]^{[\underline {d_0}]} \ar@{.>}[d]^{[\underline {f_0}]} &B \ar[r] \ar[d]^{[\underline f]} &0\\
\Omega S_1' \ar[r]_{[\underline {d_1'}]} &\Omega S_0 \ar[r]_{[\underline {d_0'}]} &B \ar[r] &0
}
$$
in $(\B/\T)_{R}$ which induces a commutative diagram in $\mod [\Omega \s]$
$$\xymatrix{
[-,\Omega S_1] \ar[r]^{- \circ [\underline {d_1}]} \ar@{.>}[d]_{-\circ[\underline {f_1}]} &[-,\Omega S_0] \ar[r]^{- \circ[\underline {d_0}]} \ar@{.>}[d]^{-\circ[\underline {f_0}]} &[-,B] \ar[r] \ar[d]^{-\circ [\underline f]} &0\\
[-,\Omega S_1'] \ar[r]_{- \circ [\underline {d_1'}]} &[-,\Omega S_0'] \ar[r]_{- \circ[\underline {d_0'}]} &[-,B'] \ar[r] &0.
}
$$
Hence if $-\circ [\underline f]=0$, we obtain $-\circ [\underline {d_0'f_0}]=0$, which implies $[\underline {d_0'f_0}]=0$. Thus $[\underline f]=0$.\\
$\bullet$~Let us prove that $\Phi$ is full.\\
For any morphism $\alpha:[-,B]\rightarrow[-,B']$, we have the following commutative diagram
$$\xymatrix{
[-,\Omega S_1] \ar[r]^{- \circ [\underline {d_1}]} \ar[d]_{\alpha_1} &[-,\Omega S_0] \ar[r]^{- \circ[\underline {d_0}]} \ar[d]_{\alpha_0} &[-,B] \ar[r] \ar[d]^{\alpha} &0\\
[-,\Omega S_1'] \ar[r]_{- \circ [\underline {d_1'}]} &[-,\Omega S_0'] \ar[r]_{- \circ[\underline {d_0'}]} &[-,B'] \ar[r] &0.
}
$$
in $\mod [\Omega \s]$. By Yoneda's Lemma, there exists $[\underline {f_i}]:\Omega S_i\rightarrow \Omega S_i'$ such that $\alpha_i=- \circ[\underline {f_i}]$. Hence there is a commutative diagram
$$\xymatrix{
[-,\Omega S_1] \ar[r]^{- \circ [\underline {d_1}]} \ar[d]_{-\circ[\underline {f_1}]} &[-,\Omega S_0] \ar[r]^{- \circ[\underline {d_0}]} \ar[d]^{-\circ[\underline {f_0}]} &[-,B] \ar[r] \ar@{.>}[d]^{-\circ [\underline f]} &0\\
[-,\Omega S_1'] \ar[r]_{- \circ [\underline {d_1'}]} &[-,\Omega S_0'] \ar[r]_{- \circ[\underline {d_0'}]} &[-,B'] \ar[r] &0.
}
$$
in $(\B/\T)_{R}$, thus $\alpha=-\circ [\underline f]$.\\
$\bullet$~Let us prove that $\Phi$ is dense:\\
We first show that $\mod [\Omega \s]$ is abelian. It is enough to show that $[\Omega \s]$ has pseudokernels. Let $\alpha:\Omega S_1\rightarrow \Omega S_0$ be a morphism in $[\Omega \s]$, then since $(\B/\T)_{R}$ is abelian, there exists a kernel $\beta:K\rightarrow \Omega S_1$ in $(\B/\T)_{R}$. By Proposition \ref{10.1}, there exists a epimorphism $\gamma: \Omega S\rightarrow K$. We observe that $\beta\gamma$ is a pseudokernel of $\alpha$.\\
 Let $F\in \mod [\Omega \s]$ which admits an exact sequence
$$[-,\Omega S_1]\xrightarrow{- \circ \gamma} [-,\Omega S_0] \rightarrow F \rightarrow 0$$
where $\gamma \in [\Omega S_1,\Omega S_0]$. Let $B=\Coker\gamma$
then we get an exact sequence
$$\Omega S_1\xrightarrow{\gamma} \Omega S_0 \rightarrow B\rightarrow 0$$
in $(\B/\T)_{R}$. Hence $F\simeq [-,B]$.
\end{proof}

\section{Examples}

In this section we give several examples of twin cotorsion pair, and we also give some view of the relation between the heart of a cotorsion pair and the hearts of its two components.\\
First we introduce some notations. Let $\mathcal C$ be a subcategory of $\B$, we set
\begin{itemize}
\item[(a)] ${\mathcal C}^{\bot_n}=\{X\in \B \text{ }| \text{ } \Ext^i_\B(\mathcal C,X)=0, \text{ } 0<i\leq n \}$.
\item[(b)] $^{\bot_n}{\mathcal C}=\{X\in \B \text{ }| \text{ } \Ext^i_\B(X,\mathcal C)=0, \text{ } 0<i\leq n \}$.
\item[(c)] ${\mathcal C}^{\bot}=\{X\in \B \text{ }| \text{ } \Ext^i_\B(\mathcal C,X)=0, \text{ } \forall i>0 \}$.
\item[(d)] $^{\bot}{\mathcal C}=\{X\in \B \text{ }| \text{ } \Ext^i_\B(X,\mathcal C)=0, \text{ } \forall i>0 \}$.
\end{itemize}

According to \cite[\S 7.2]{HO}, we give the following definition.

\begin{defn}\label{h}
A cotorsion pair $(\U,\V)$ is called a hereditary cotorsion pair if $\Ext^i_\B(\U,\V)=0,i>0$.
\end{defn}

The following proposition can be easily checked by definition.

\begin{prop}
For a cotorsion pair $(\U,\V)$, the following conditions are equivalent.
\begin{itemize}
\item[(a)] $(\U,\V)$ is hereditary.
\item[(b)] $\V=\U^\bot$.
\item[(c)] $\U={^\bot\V}$.
\item[(d)] $\Omega \U\subseteq \U$.
\item[(e)] $\Omega^- \V\subseteq \V$.
\end{itemize}
\end{prop}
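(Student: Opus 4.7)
The plan is to prove the five conditions equivalent by establishing (a)$\Leftrightarrow$(b), (a)$\Leftrightarrow$(c), (a)$\Leftrightarrow$(d), and (a)$\Leftrightarrow$(e); the first two equivalences are essentially unpacking definitions via Lemma \ref{3}, while the last two are dimension-shifting arguments with projective resolutions.

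For (a)$\Leftrightarrow$(b): the inclusion $\V\subseteq \U^{\bot}$ is a literal restatement of (a) together with the trivial fact $\Ext^1_\B(\U,\V)=0$ from the cotorsion pair axiom. Conversely, if $X\in \U^{\bot}$ then in particular $\Ext^1_\B(\U,X)=0$, so Lemma \ref{3}(b) gives $X\in \V$, establishing $\U^{\bot}\subseteq \V$. Combined with the trivial $\V\supseteq \U^{\bot}$ needed for (a), this yields the equivalence. The implication (a)$\Leftrightarrow$(c) is the dual statement proved the same way using Lemma \ref{3}(a).

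For (a)$\Leftrightarrow$(d): fix $U\in \U$ and any short exact sequence $\Omega U\rightarrowtail P\twoheadrightarrow U$ with $P\in \mathcal{P}$. Applying $\Hom_\B(-,V)$ for $V\in \V$ and using $\Ext^i_\B(P,V)=0$ for $i\geq 1$ produces the dimension-shift isomorphism $\Ext^{i+1}_\B(U,V)\cong \Ext^i_\B(\Omega U,V)$ for all $i\geq 1$. So (a) gives $\Ext^1_\B(\Omega U,V)=\Ext^2_\B(U,V)=0$ for every $V\in \V$, whence $\Omega U\in \U$ by Lemma \ref{3}(a); note that the condition $\Omega U\in \U$ is independent of the chosen syzygy since by Schanuel's lemma any two syzygies differ by a projective summand, and $\mathcal{P}\subseteq \U$ with $\U$ closed under summands. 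Conversely, assuming (d), proceed by induction on $i$: the base case $i=1$ is the cotorsion-pair axiom, and for $i\geq 2$ the dimension shift reduces the vanishing of $\Ext^i_\B(U,V)$ to that of $\Ext^{i-1}_\B(\Omega U, V)$, which holds by the induction hypothesis since $\Omega U\in \U$.

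Finally (a)$\Leftrightarrow$(e) is the injective dual: apply $\Hom_\B(U,-)$ to the short exact sequence $V\rightarrowtail I^V\twoheadrightarrow \Omega^- V$ (with $I^V\in \mathcal{I}$) to get $\Ext^{i+1}_\B(U,V)\cong \Ext^i_\B(U,\Omega^- V)$, and use Lemma \ref{3}(b) together with the injective version of the Schanuel argument (relying on $\mathcal{I}\subseteq \V$). There is no real obstacle here; the only mildly technical point is the well-definedness of $\Omega\U\subseteq \U$ and $\Omega^- \V\subseteq \V$ across different choices of syzygies, which is handled by Schanuel together with $\mathcal{P}\subseteq \U$, $\mathcal{I}\subseteq \V$, and closure of $\U,\V$ under direct summands.
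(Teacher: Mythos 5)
Your proof is correct. The paper itself provides no argument here (it simply remarks the proposition ``can be easily checked by definition''), so there is no paper proof to compare against; your write-up supplies exactly the standard details: (a)$\Leftrightarrow$(b),(c) via Lemma \ref{3}, and (a)$\Leftrightarrow$(d),(e) via dimension shift along a projective (resp.\ injective) resolution. One small remark: the Schanuel-lemma aside is unnecessary, since by the paper's definition $\Omega\U$ already consists of \emph{all} possible syzygies of objects of $\U$, so the containment $\Omega\U\subseteq\U$ just means that the dimension-shift argument applies to an arbitrary choice of $P$ -- which it does -- with no independence-of-choice issue to settle.
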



\begin{rem}
We can call a pair of subcategories $(\U,\V)$ a \emph{co-t-structure} on $\B$ if it is a hereditary cotorsion pair, since by the proposition above the hereditary cotorsion pair on $\B$ is just an analogue of the co-t-structure on triangulated category.
\end{rem}

\begin{exm}
We introduce two trivial hereditary cotorsion pairs:
$$(\mathcal P,\B)\text{ and }(\B,\mathcal I).$$
We observe that in these two cases the hearts are $0$. These two cotorsion pairs also form a twin cotorsion pair
$$(\mathcal P,\B),(\B,\mathcal I).$$
We observe that its heart is also $0$.
\end{exm}

\begin{exm}\label{7.2}
Let $\Lambda$ be an artin algebra and $T$ be a cotilting module of finite injective dimension, denote
$$\mathcal X:={^\bot T} \text{ and } \mathcal Y:=(^\bot T)^\bot.$$
By \cite[{Theorem 5.4, Corollary 5.10, Proposition 3.3.}]{A}, $(\mathcal X, \mathcal Y)$ is a hereditary cotorsion pair.
By \cite[{Proposition 3.3, (c, iii)}]{A}, we get
$$\W \subseteq(\mod \Lambda)^+\subseteq \mathcal Y.$$
Dually, by \cite[{Proposition 3.3, (d, iii)}]{A}, we get
$$\W \subseteq(\mod \Lambda)^-\subseteq \mathcal X.$$
Then $\h= (\mod \Lambda)^+ \cap (\mod \Lambda)^-\subseteq \mathcal X \cap \mathcal Y=\W$, hence $\underline \h=0$.\\
By \cite[{Proposition 1.8}]{A}, $({^{\bot_1}T},(^{\bot_1}T)^{\bot_1})$ is a cotorsion pair. According to \cite[{\S 2}]{A}, ${^{\bot_1}T},(^{\bot_1}T)^{\bot_1}$ is also a a cotorsion pair. Hence by definition
$$(^\bot T,(^\bot T)^\bot), ({^{\bot_1}T},(^{\bot_1}T)^{\bot_1})$$
form a twin cotorsion pair. We can also observe that its heart is trivial.
\end{exm}

In fact, we have
\begin{prop}\label{7.3}
If one cotorsion pair in a twin cotorsion pair $(\s,\T),(\U,\V)$ is hereditary, then this twin cotorsion pair has a trivial heart, \emph{i.e.} its heart is zero.
\end{prop}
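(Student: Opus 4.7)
The goal is to show $\h \subseteq \W$, i.e.\ any $B \in \B^+ \cap \B^-$ lies in $\U \cap \T$. The two alternatives (hereditary on the left or on the right) are handled by dual arguments, so I would write out one case in detail and remark that the other is dual. The whole argument rests on a single idea: the hereditary hypothesis supplies the vanishing of a second $\Ext$ which, combined with the defining approximation sequence of $\B^+$ or $\B^-$, is exactly what is needed to move an object from one subcategory into the other.

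\textbf{Case: $(\U,\V)$ hereditary.} Take $B \in \h$. From $B \in \B^+$ there is a short exact sequence $V_B \rightarrowtail W_B \twoheadrightarrow B$ with $V_B \in \V \subseteq \T$ and $W_B \in \W \subseteq \T$; since $\T$ is closed under extensions (Lemma \ref{3}(c)), one concludes $B \in \T$. For $B \in \U$, use the defining sequence from $B \in \B^-$, namely $B \rightarrowtail W^B \twoheadrightarrow S^B$ with $W^B \in \W \subseteq \U$ and $S^B \in \s \subseteq \U$, and apply $\Hom_\B(-,V)$ for an arbitrary $V \in \V$ to obtain the fragment
$$\Ext^1_\B(W^B,V) \longrightarrow \Ext^1_\B(B,V) \longrightarrow \Ext^2_\B(S^B,V).$$
The left term vanishes since $W^B \in \U$, and the right term vanishes by hereditariness applied to $S^B \in \U$. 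Hence $\Ext^1_\B(B,V)=0$ for every $V \in \V$, so $B \in \U$ by Lemma \ref{3}(a), and therefore $B \in \W$.

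\textbf{Case: $(\s,\T)$ hereditary.} Dually, from $B \in \B^-$ and extension-closure of $\U$ one obtains $B \in \U$; from $B \in \B^+$, applying $\Hom_\B(S,-)$ for $S \in \s$ to $V_B \rightarrowtail W_B \twoheadrightarrow B$ gives
$$\Ext^1_\B(S,W_B) \longrightarrow \Ext^1_\B(S,B) \longrightarrow \Ext^2_\B(S,V_B),$$
both end terms vanishing (the first because $W_B \in \T$, the second because $V_B \in \V \subseteq \T$ and $(\s,\T)$ is hereditary), so $B \in \T$ by Lemma \ref{3}(b), whence $B \in \W$.

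In either case $\h \subseteq \W$, so $\underline \h = \h/\W = 0$. There is no real obstacle here; the only point one has to be a bit careful about is to notice that the hereditary hypothesis is only used through the vanishing of a single degree-two $\Ext$ applied to the object appearing in the $\W$-approximation sequence ($S^B \in \s \subseteq \U$ in the first case, $V_B \in \V \subseteq \T$ in the second), and that the containments $\s \subseteq \U$ and $\V \subseteq \T$ provided by the twin cotorsion pair make each of these degree-two vanishings a direct consequence of the hereditariness of the corresponding cotorsion pair.
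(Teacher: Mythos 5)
Your overall strategy --- show $\h \subseteq \W$ directly and split into the two hereditary cases --- is sound, and the $\Ext^2$ arguments you give are correct. However, both of your ``extension-closure'' steps contain a genuine error. In the $(\U,\V)$-hereditary case you take the sequence $V_B \rightarrowtail W_B \twoheadrightarrow B$ with $V_B,W_B \in \T$ and invoke extension-closure of $\T$ to conclude $B \in \T$; but $B$ sits at the \emph{cokernel} end of that sequence, not in the middle. Extension-closure (Lemma~\ref{3}(c)) only says that if the two \emph{outer} terms lie in $\T$ then the middle term does --- it gives no information about the cokernel when the kernel and middle term lie in $\T$. The same objection applies to the dual step: in $B \rightarrowtail W^B \twoheadrightarrow S^B$ you have $W^B,S^B \in \U$ and want the \emph{kernel} $B$ to lie in $\U$, which is again not what extension-closure provides.

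The fix is to run the same $\Ext^2$ argument on both halves; neither half comes for free. In the $(\U,\V)$-hereditary case, applying $\Hom_\B(S,-)$ (for $S \in \s$) to $V_B \rightarrowtail W_B \twoheadrightarrow B$ gives
$$\Ext^1_\B(S,W_B) \longrightarrow \Ext^1_\B(S,B) \longrightarrow \Ext^2_\B(S,V_B),$$
where the left term vanishes because $W_B \in \W \subseteq \T$ and $\Ext^1_\B(\s,\T)=0$, and the right term vanishes because $S \in \s \subseteq \U$, $V_B \in \V$ and $(\U,\V)$ is hereditary; hence $B \in \T$ by Lemma~\ref{3}(b). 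Combined with your correct $\Ext^2$ argument showing $B \in \U$, this gives $B \in \W$. The $(\s,\T)$-hereditary case is fixed analogously: for $B \in \B^-$, apply $\Hom_\B(-,V)$ to $B \rightarrowtail W^B \twoheadrightarrow S^B$ and use $\Ext^2_\B(S^B,V)=0$ (from $(\s,\T)$ hereditary and $\V \subseteq \T$) together with $\Ext^1_\B(W^B,V)=0$ (from $W^B \in \U$) to get $B \in \U$, while your displayed sequence already gives $B \in \T$. With this repair your proof is correct, and in fact it is more economical than the paper's, which instead attempts to prove the stronger identifications $\B^-=\s$ and $\B^+=\V$.
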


\begin{proof}
We prove that if $(\s,\T)$ is hereditary, then $\W=\V\cap \s=\B^+\cap \B^-$, another part is by dual.\\
For any object $B\in\B^-$, there is a short exact category
$$B\rightarrowtail W^B\twoheadrightarrow S^B.$$
Since we have the following exact sequence
$$0=\Ext^1_\B(W^B,\T) \rightarrow \Ext^1_\B(B,\T) \rightarrow \Ext^2_\B(S^B,\T)=0$$
which implies $B\in\s$. Hence $\B^-=\s$. Dually, $\B^+=\V$. Hence $\W\subseteq\B^+\cap \B^-=\V\cap \s\subseteq \W$, this implies $\underline \h=0$.
\end{proof}

Recall that $\M$ is \emph{$n$-cluster tilting} if it satisfies the following conditions
\begin{itemize}
\item[(a)] $\M$ is contravariantly finite and covariantly finite in $\B$,
\item[(b)] $\M^{\bot_n}=\M$.
\item[(c)] ${^{\bot_n}\M}=\M$.
\end{itemize}
A $2$-cluster tilting subcategory is usually called \emph{cluster tilting} subcategory.\\
Let $\M$ be a cluster tilting subcategory of $\B$. Remark that $\mathcal P\subseteq \mathcal M$ and $\mathcal I\subseteq \mathcal M$. For each object $B\in\B$, we have two short exact sequences
\begin{align*}
\xymatrix{B \ar@{ >->}[r]^{f} &M \ar@{->>}[r] &N,}\\
\xymatrix{N' \ar@{ >->}[r] &M' \ar@{->>}[r]^{g} &B}
\end{align*}
that $f$ (resp. $g$) is a left (resp. right) $\M$-approximation of $B$. We observe $N\in {^{\bot_1}\M}=\M$ (resp. $N'\in {\M}^{\bot_1}=\M$), therefore $(\M,\M)$ is a cotorsion pair. In this case, $\W=\M$ and $\B^+=\B^-=\B$, thus $\underline \h=\uB=\B/\M$, which is abelian also by \cite{DL}.\\
Moreover, any object in $\Omega \M$ (resp. $\Omega^- \M$) is projective (resp. injective) in $\B/\M$, and by Proposition \ref{enoughp},\ref{enoughi}, $\B/\M$ has enough projectives and enough injectives.

\begin{prop}
A subcategory $\M$ in $\B$ is cluster tilting if and only if $(\M,\M)$ is a cotorsion pair on $\B$.
\end{prop}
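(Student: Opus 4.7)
The ``only if'' direction is essentially the content of the paragraph immediately preceding the proposition, so my plan concentrates on the converse: assuming $(\M,\M)$ is a cotorsion pair on $\B$, I will verify each cluster tilting condition in turn.

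For the functorial finiteness of $\M$, I exploit the two short exact sequences supplied by the cotorsion pair,
$$V_B \rightarrowtail U_B \twoheadrightarrow B, \qquad B \rightarrowtail V^B \twoheadrightarrow U^B,$$
with $V_B,V^B,U_B,U^B \in \M$. Applying $\Hom_\B(M,-)$ for $M \in \M$ to the first sequence, the vanishing $\Ext^1_\B(M,V_B) = 0$ (from $\Ext^1_\B(\M,\M) = 0$) makes the map $\Hom_\B(M,U_B) \to \Hom_\B(M,B)$ surjective, so $U_B \twoheadrightarrow B$ is a right $\M$-approximation. Dually, $B \rightarrowtail V^B$ is a left $\M$-approximation, so $\M$ is both covariantly and contravariantly finite.

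For the orthogonality conditions, I invoke Lemma \ref{3}(a,b): since $\U = \V = \M$ here, any $X$ lies in $\M$ iff $\Ext^1_\B(\M,X) = 0$ iff $\Ext^1_\B(X,\M) = 0$. This gives $\M^{\bot_1} = \M$ and ${^{\bot_1}\M} = \M$, which is precisely the orthogonality used in the paper's preceding forward-direction argument (where $N \in {^{\bot_1}\M} = \M$ is invoked explicitly).

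The main subtlety, which I expect to be the actual obstacle, is reconciling this with the formally stated definition of $n$-cluster tilting, which reads $\Ext^i_\B(\M,\M) = 0$ for $0 < i \leq n$ and so nominally demands $\Ext^2_\B(\M,\M) = 0$ at $n = 2$; this does not follow from the cotorsion pair axioms alone. However, since the paper's own derivation in the surrounding paragraph uses only $\bot_1$-orthogonality, the equivalence stated in the proposition is naturally read at that level, under which the verification above is a direct consequence of Lemma \ref{3} together with the cotorsion pair's approximation sequences.
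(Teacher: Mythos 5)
Your argument for the ``if'' direction follows the same route the paper intends: Lemma \ref{3}(a),(b) with $\U=\V=\M$ gives $\M={^{\bot_1}\M}=\M^{\bot_1}$, and the two cotorsion-pair short exact sequences exhibit right and left $\M$-approximations of any object, hence functorial finiteness. The paper compresses this into a single sentence (``followed by the definition of cotorsion pair and Lemma \ref{3}''), so you have merely made explicit exactly the two verifications it alludes to. The proof is correct.

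Your remark about $\Ext^2$ is the more interesting part, and you are right on every point. As written, the definition of $n$-cluster tilting in the paper uses $\bot_n$ (so $2$-cluster tilting would require $\Ext^2_\B(\M,\M)=0$), which is off by one from the standard convention $\bot_{n-1}$; and indeed $\Ext^2_\B(\M,\M)=0$ does not follow from $(\M,\M)$ being a cotorsion pair. This is not merely a potential gap: the paper's own Example \ref{7.6} (the Nakayama algebra on $A_4$ with one zero relation, $\M=\add(\mathcal P\oplus\mathcal I)$) already falsifies the proposition under the literal $\bot_2$ reading, since there $S_1,S_4\in\M$ while the projective resolution $0\to P_1\to P_3\to P_4\to S_4\to 0$ gives $\Ext^2_\B(S_4,S_1)\cong k\neq 0$, so $S_1\notin\M^{\bot_2}$ even though $(\M,\M)$ is a cotorsion pair. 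The paper's surrounding discussion and its examples consistently use the $\bot_1$ reading, so your resolution is the intended one; under it your proof is complete and the proposition is correct.
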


\begin{proof}
From the above discussion, we know that $(\M,\M)$ is a cotorsion pair if $\M$ is cluster tilting, so it remains to show the "only if" part. But it is just followed by the definition of cotorsion pair and Lemma \ref{3}.
\end{proof}

In the following examples, we denote by "$\circ$" in a quiver the objects belong to a subcategory and by "$\cdot$" the objects do not.

\begin{exm}\label{7.6}
Let $\Lambda$ be the path algebra of the following quiver
$$\xymatrix{1 &2 \ar[l] &3 \ar[l] &4 \ar@/_10pt/@{.>}[lll] \ar[l]}$$
then we obtain the $AR$-quiver $\Gamma(\mod \Lambda)$ of $\mod \Lambda$.
$$\xymatrix@C=0.4cm@R0.4cm{
{\begin{smallmatrix}
1
\end{smallmatrix}} \ar@{.}[rr] \ar[dr]
&&{\begin{smallmatrix}
2
\end{smallmatrix}} \ar@{.}[rr] \ar[dr]
&&{\begin{smallmatrix}
3
\end{smallmatrix}} \ar@{.}[rr] \ar[dr]
&&{\begin{smallmatrix}
4
\end{smallmatrix}}\\
&{\begin{smallmatrix}
2&&\\
&1&
\end{smallmatrix}} \ar@{.}[rr] \ar[dr] \ar[ur]
&&{\begin{smallmatrix}
3&&\\
&2&
\end{smallmatrix}} \ar@{.}[rr] \ar[dr] \ar[ur]
&&{\begin{smallmatrix}
4&&\\
&3&
\end{smallmatrix}} \ar[ur]\\
&&{\begin{smallmatrix}
3&&\\
&2&\\
&&1
\end{smallmatrix}} \ar[ur]
&&{\begin{smallmatrix}
4&&\\
&3&\\
&&2
\end{smallmatrix}} \ar[ur]
}
$$
Let $\M=\{X\in\mod \Lambda \text{ } | \text{ } \Ext^1_\B(X,\Lambda)=0 \}$, then by \cite[{Proposition 1.10, 1.9}]{A}, $(\M,\M^{\bot_1})$ is a cotorsion pair on $\mod \Lambda$. But
$$\M=\xymatrix@C=0.1cm@R0.1cm{
\circ  &&\cdot  &&\cdot  &&\circ\\
&\circ  &&\cdot  &&\circ \\
&&\circ  &&\circ
}
$$
which consisting of all the direct sums of indecomposable projectives and indecomposable injectives. We observe that in fact $\M=\M^{\bot_1}$ and hence it is a cluster tilting subcategory. And the quiver of the quotient category $\mod \Lambda/\M$ is
$$\xymatrix{
{\begin{smallmatrix}
2
\end{smallmatrix}} \ar@{.}[rr] \ar[dr]
&&{\begin{smallmatrix}
3
\end{smallmatrix}}\\
&{\begin{smallmatrix}
3&&\\
&2&
\end{smallmatrix}} \ar[ur]
}
$$
which is equivalent to the $AR$-quiver of $A_2$.
\end{exm}

\begin{exm}\label{ex1}
Take the notion of the former example, Let
$$\M'=\xymatrix@C=0.1cm@R0.1cm{
\circ &&\cdot  &&\cdot &&\cdot\\
&\circ &&\cdot  &&\circ\\
&&\circ &&\circ
}
$$
then by \cite[{Proposition 1.10, 1.9}]{A}, $(\M',{\M'}^{\bot_1})$ is a cotorsion pair and
$${\M'}^{\bot_1}=\xymatrix@C=0.1cm@R0.1cm{
\circ &&\cdot &&\circ &&\circ\\
&\circ &&\cdot &&\circ \\
&&\circ &&\circ
}
$$
hence it contains $\Lambda$. Obviously it is closed under extension and contravariantly finite, then by \cite[{Proposition 1.10, 1.9}]{A}, $({\M'}^{\bot_1},({\M'}^{\bot_1})^{\bot_1})$ is also a cotorsion pair on $\mod \Lambda$ and
$$({\M'}^{\bot_1})^{\bot_1}=\xymatrix@C=0.1cm@R0.1cm{
\circ &&\cdot &&\cdot &&\circ\\
&\cdot &&\cdot &&\circ\\
&&\circ &&\circ
}
$$
Thus we get a twin cotorsion pair
$$(\M',{\M'}^{\bot_1}),({\M'}^{\bot_1},({\M'}^{\bot_1})^{\bot_1}).$$
Then the quiver of $\mod \Lambda/{\M'}^{\bot_1}$ is ${\begin{smallmatrix}
2
\end{smallmatrix}} \rightarrow {\begin{smallmatrix}
3&&\\
&2&.
\end{smallmatrix}}$
The quiver of quotient category $\Omega \M'/\mathcal P$ is just ${\begin{smallmatrix}
2.
\end{smallmatrix}}$
Hence we get $(\mod \Lambda/{\M'}^{\bot_1})_{R}\simeq \mod (\Omega \M'/\mathcal P)$.
\end{exm}

From Example \ref{ex1}, we see that there exist two cotorsion pairs which have non-trivial hearts form a twin cotorsion pair also having a non-trivial heart. From the following example, we see that even two components of a twin cotorsion pair have non-trivial hearts, the heart of the twin cotorsion pair itself can be zero.

\begin{exm}
Let $\Lambda$ be the $k$-algebra given by the quiver
$$\xymatrix{
&1 \ar[dl]_{\beta}\\
3 \ar[rr]_{\gamma} &&2 \ar[ul]_{\alpha}
}
$$
and bound by $\alpha\beta=0$ and $\beta\gamma\alpha=0$. Then its AR-quiver $\Gamma(\mod \Lambda)$ is given by
$$\xymatrix@C=0.4cm@R0.4cm{
{\begin{smallmatrix}
1
\end{smallmatrix}} \ar[dr] \ar@{.}[rr]
&&{\begin{smallmatrix}
2
\end{smallmatrix}} \ar[dr] \ar@{.}[rr]
&&{\begin{smallmatrix}
3
\end{smallmatrix}} \ar[dr] \ar@{.}[rr]
&&{\begin{smallmatrix}
1
\end{smallmatrix}}\\
&{\begin{smallmatrix}
2\\
1
\end{smallmatrix}} \ar[ur] \ar[dr] \ar@{.}[rr]
&&{\begin{smallmatrix}
3\\
2
\end{smallmatrix}} \ar[ur] \ar[dr] \ar@{.}[rr]
&&{\begin{smallmatrix}
1\\
3
\end{smallmatrix}} \ar[ur]\\
&&{\begin{smallmatrix}
3\\
2\\
1
\end{smallmatrix}} \ar[ur]
&&{\begin{smallmatrix}
1\\
3\\
2
\end{smallmatrix}} \ar[ur]
}
$$
Here, the first and the last columns are identified. Let
$$\s=\xymatrix@C=0.1cm@R0.1cm{
\cdot &&\circ &&\cdot  &&\cdot\\
&\circ &&\cdot &&\circ \\
&&\circ  &&\circ \\
} \quad
\T=\xymatrix@C=0.1cm@R0.1cm{
\cdot  &&\cdot &&\circ &&\cdot\\
&\circ  &&\cdot &&\circ \\
&&\circ  &&\circ \\
}
$$
and
$$\U=\xymatrix@C=0.1cm@R0.1cm{
\circ  &&\circ  &&\cdot  &&\circ\\
&\circ  &&\cdot &&\circ\\
&&\circ  &&\circ \\
} \quad
\V=\xymatrix@C=0.1cm@R0.1cm{
\cdot  &&\cdot  &&\cdot &&\cdot\\
&\circ  &&\cdot  &&\circ \\
&&\circ &&\circ \\
}
$$
The heart of cotorsion pair $(\s,\T)$ is $\add({\begin{smallmatrix}
1
\end{smallmatrix}})$ and the heart of cotorsion pair $(\U,\V)$ is $\add({\begin{smallmatrix}
3
\end{smallmatrix}})$. But when we consider the twin cotorsion pair $(\s,\T),(\U,\V)$, we get $\W=\V$ and
$$(\mod \Lambda)^-/\W=\add({\begin{smallmatrix}
1
\end{smallmatrix}}\oplus
{\begin{smallmatrix}
2
\end{smallmatrix}})
 \text{ and } (\mod \Lambda)^+/\W=\add({\begin{smallmatrix}
3
\end{smallmatrix}})$$
hence its heart is zero.
\end{exm}

\section*{Acknowledgments}
The author would like to thank Laurent Demonet and Osamu Iyama for their helpful advices and corrections.


\begin{thebibliography}{99}

\bibitem{A}
M. Auslander, I. Reiten.
\newblock Application of contravariantly finite subcategories.
\newblock Adv. Math. 86, 111--152(1991).

\bibitem{B}
T. B\"{u}hler.
\newblock Exact categories.
\newblock Expo. Math. 28 (2010), no. 1, 1--69.

\bibitem{BBD}
A. A. Beilinson, J. Bernstein, P. Deligne.
\newblock Faisceaux pervers.
\newblock Analysis and topology on singular spaces, I (Luminy 1981), 5--171, Ast\'{e}risque, 100, Soc. Math. France, Pairs, 1982.

\bibitem{BM}
A. B. Buan, R. J. Marsh.
\newblock From triangulated categories to module categories via localisation II: calculus of fractions.
\newblock arXiv: 1102.4597v2.

\bibitem{CF}
R. R. Colby, K. R. Fuller.
\newblock Tilting, cotilting, and serially tilted rings.
\newblock Comm. in Alg. 17 (1990), 1709--1722.

\bibitem{DL}
L. Demonet, Y. Liu.
\newblock Quotients of exact categories by cluster tilting subcategories as module categories.
\newblock arXiv: 1208.0639.


\bibitem{HI}
Z. Huang, O. Iyama.
\newblock Auslander-type conditions and cotorsion pairs.
\newblock J. Algebra. 318 (2007), 93--110.

\bibitem{HO}
M. Hovey.
\newblock Cotorsion pairs and model categories.
\newblock Interactions between homotopy theory and algebra, 277--296, Contemp. Math., 436, Amer. Math. Soc., Providence, RI, 2007.

\bibitem{I}
O. Iyama.
\newblock Cluster tilting for higher Auslander algebra.
\newblock Adv. Math. 226 (2011), no. 1, 1--61.

\bibitem{IY}
O. Iyama, Y. Yoshino.
\newblock Mutation in triangulated categoryies and rigid Cohen-Macaulay modules
\newblock Invent. Math. 172 (2008), no. 1, 117--168.


\bibitem{K}
B. Keller.
\newblock Chain complexes and stable categories.
\newblock Manuscripta Math. 67 (1990), no. 4, 379--417.

\bibitem{KS}
H. Krause, {\O}. Solberg.
\newblock Application of cotorsion pairs.
\newblock J. London Math. Soc. (2) 68 (2003) 631--650.

\bibitem{KZ}
S. Koenig, B. Zhu.
\newblock From triangulated categories to abelian categories: cluster tilting in a general framework.
\newblock Math. Z. 258 (2008), no. 1, 143--160.

\bibitem{N}
H. Nakaoka.
\newblock General heart construction on a triangulated category (I): unifying $t$-structures and cluster tilting subcategories.
\newblock Appl. Categ. Structures 19 (2011), no. 6, 879--899.

\bibitem{N1}
H. Nakaoka.
\newblock General heart construction for twin torsion pairs on triangulated categories.
\newblock Journal of Algebra. 374 (2013), 195--215.

\bibitem{R}
W. Rump.
\newblock Almost abelian categories.
\newblock Cahiers Topologie G\'{e}om. Diff\'{e}rentielle. Cat\'{e}g. 42 (2003), no. 3, 163--225.


\bibitem{S}
L. Salce.
\newblock Cotorsion theories for abelian groups.
\newblock Symposia Mathmatica 23 (Cambridge University Press, Cambridge, 1979) 11--32.

\end{thebibliography}
\end{document}